\newcommand\blfootnote[1]{%
  \begingroup
  \renewcommand\thefootnote{}\footnote{#1}%
  \addtocounter{footnote}{-1}%
  \endgroup
}
\theoremstyle{plain}
\newtheorem{theorem}{Theorem}[section]
\newtheorem{lemma}[theorem]{Lemma}
\newtheorem{proposition}[theorem]{Proposition}
\newtheorem{corollary}[theorem]{Corollary}
\theoremstyle{definition}
 \newtheorem{definition}[theorem]{Definition}
\theoremstyle{remark}
\newtheorem{rmk}[theorem]{Remark}
\numberwithin{equation}{section}
\theoremstyle{remark}
\mathchardef\emptyset="001F
\newcommand{\op}[1]{{\rm{#1}}}
\newcommand{\R}{\mathbb{R}}
\newcommand{\N}{\mathbb{N}}
\newcommand*{\medcup}{\mathbin{\scalebox{1.5}{\ensuremath{\cup}}}}
\newcommand{\verti}[1]{\ensuremath{\left\lvert #1 \right\rvert}}
\newcommand{\parenth}[1]{\ensuremath{\left( #1 \right)}}
\begin{document}
\title[Unconstrained polarization (Chebyshev) problems]{Unconstrained polarization (Chebyshev) problems: basic properties and Riesz kernel asymptotics}

\author{Douglas Hardin}
\address{Vanderbilt University}
\email{douglas.hardin@Vanderbilt.edu}

\author{Mircea Petrache}
\address{Pontificia Universidad Catolica de Chile
}
\email{mpetrache@mat.uc.cl}

\author{Edward B. Saff}
\address{Vanderbilt University}
\email{Edward.B.Saff@Vanderbilt.edu}

\date{\today}
\maketitle \blfootnote{\textbf{Acknowledgements:} The research of the authors was supported, in part, by the US National Science Foundation grant DMS-1516400. The authors are grateful for the stimulating work environment provided by ICERM (Brown University), during the Semester Program on ``Point Configurations in Geometry, Physics and Computer Science'' in spring 2018 supported by the National Science Foundation under Grant No. DMS-1439786.}

\begin{abstract}
 We introduce and study the unconstrained polarization (or Chebyshev) problem which requires to find an $N$-point configuration that maximizes the minimum value of its potential over a set $A$ in $p$-dimensional Euclidean space. This problem is compared to the constrained problem in which the points are required to belong to the set $A$. We find that for Riesz kernels $1/|x-y|^s$ with $s>p-2$ the optimum unconstrained configurations concentrate close to the set $A$ and based on this fundamental fact we recover the same asymptotic value of the polarization as for the more classical constrained problem on a class of $d$-rectifiable sets. We also investigate the new unconstrained problem in special cases such as for spheres and balls. In the last section we formulate some natural open problems and conjectures.
\end{abstract}

\medskip

\noindent\textbf{Keywords:} Maximal Riesz polarization, Unconstrained polarization, Chebyshev constant, Riesz~ potential

\medskip

\noindent\textbf{Mathematics Subject Classification:} Primary: 31C15, 31C20 ; Secondary: 30C80.
\tableofcontents
\section{Introduction and statement of main results}
%
Let $A, B$ be two non-empty sets, and $K:B\times A\to (-\infty,+\infty]$ be a kernel (or pairwise potential). For $N\in \N$ we consider the max-min optimization problem
\begin{equation}\label{2plate_polarization}
P_K(A,\omega_N):=\inf_{y\in A}\sum_{i=1}^NK(x_i,y),\quad \mathcal P_K(A,B,N):=\sup_{\omega_N\subset B}P_K(A,\omega_N),
\end{equation}
where the maximum is taken over $N$-point multisets $\omega_N=\{x_1,\ldots,x_N\}\subset B$. (Note that a multiset is a list where elements can be repeated.) The determination of \eqref{2plate_polarization} is called the \emph{two-plate polarization (or Chebyshev) problem} (see Proposition~\ref{prop:cheby} below for the link to the theory of Chebyshev polynomials, justifying this name). For background and motivation of the study of polarization problems,
see \cite[Chapter 14]{book}. If $A^\prime\subset A$ and $B^\prime\subset B$ we note the basic monotonicity properties
\begin{equation}\label{ineq_pol}
\mathcal P_K(A^\prime,B,N)\ge \mathcal P_K(A,B,N), \qquad \mathcal P_K(A,B^\prime,N)\le \mathcal P_K(A,B,N).
\end{equation}
The case $A=B$ of \eqref{2plate_polarization}, also known as \emph{the single-plate polarization (or Chebyshev) problem} for $A$, has been the more studied so far (see \cite{2013erdelyisaff, Kershner1939, book}); and for it we introduce the notation
\begin{equation}\label{ps}
\mathcal P_K(A,N):=\mathcal P_K(A,A,N).
\end{equation}
A related quantity is the value of the \emph{minimum $N$-point $K$-energy}\footnote{See, e.g., the recent book \cite{book} and recent articles \cite{BetSan2018}, \cite{PetNod2018}}, given by
\begin{equation}\label{min_en}
\mathcal E_K(A,N):=\inf_{\omega_N\subset A}\sum_{i=1}^N\sum_{\substack{j:j\neq i\\ j=1}}^N K(x_i,x_j).
\end{equation}
If $N\ge 2$, $A\subset B$ are compact sets and $K: B\times B\to (-\infty,+\infty]$ is a symmetric function, we have the following relation between the above quantities (see \cite[Prop.\,14.1.1]{book}, \cite[Thm.\,2.3]{2013erdelyisaff})
\begin{equation}\label{bounds}
\mathcal P_K(A,B,N)\ge \mathcal P_K(A,N)\ge \frac{\mathcal E_K(A,N+1)}{N+1}\ge\frac{\mathcal E_K(A,N)}{N-1}.
\end{equation}
The goal of this article is to study the case $A\subset B=\R^p$ of \eqref{2plate_polarization}, in which the configurations $\omega_N$ are \emph{unconstrained}, and we use the notation
\begin{equation}\label{psstar}
\mathcal P_K^*(A,N):=\mathcal P_K(A,\R^p,N)=\sup_{\omega_N\subset\R^p}P_K(A,\omega_N).
\end{equation}
Directly from \eqref{ineq_pol} and from the definitions \eqref{ps} and \eqref{psstar}, we find that
\begin{equation}\label{inclusion}
\mathcal P_K^*(A^\prime,N)\ge \mathcal P_K^*(A,N)\quad\mbox{whenever}\quad A^\prime\subset A,
\end{equation}
and, for all $A\subset \R^p$,
\begin{equation}\label{inequality}
\mathcal P_K(A,N)\le \mathcal P_K^*(A,N).
\end{equation}
Our results are motivated by the study of the important class of kernels called \emph{Riesz $s$-potentials}:
\begin{equation}\label{def_Ks}
K_s(x,y):=\left\{\begin{array}{cl}
                 |x-y|^{-s}&\mbox{ if }s>0\ ,\\[3mm]
                 -\log|x-y|&\mbox{ if }s=0\ ,\\[3mm]
                 -|x-y|^{-s}&\mbox{ if }s<0\ .
                 \end{array}
\right.
\end{equation}
 In \eqref{def_Ks}, we define $K_s(x,x)=+\infty$ if $s\ge 0$.  For brevity we set
\begin{equation}\label{def_ps}
P_s(A,\omega_N):=P_{K_s}(A,\omega_N),\quad \mathcal P_s(A,N):=\mathcal P_{K_s}(A,N),\quad \mathcal P_s^*(A,N):=\mathcal P_{K_s}^*(A,N).
\end{equation}
Note that the monotonicity property \eqref{inclusion} is not true for $\mathcal P_s(A,N)$ (see \cite[Sec. 14.2]{book}) which, in some cases, may make the problem $\mathcal P_s^*(A,N)$ more tractable than $\mathcal P_s(A,N)$. We shall refer to \eqref{ps} as the \emph{constrained polarization} problem and to \eqref{psstar} as the \emph{unconstrained} problem. 

\medskip

\noindent The above definition \eqref{def_Ks} for $s=0$ is justified by the results of Propositions \ref{prop:bosuwan} and \ref{prop:bosuwan*}, which say that optimal configurations for $\mathcal P_0$ are the limits as $s\downarrow 0$ of optimal configurations for the problems $\mathcal P_s$. The study of $s$-polarization for large values of $s$ is related to \emph{best-covering problems}; the limits of \eqref{def_ps} as $s\to\infty$ yield best-covering constants, also treated in Propositions \ref{prop:bosuwan} and \ref{prop:bosuwan*}. In preparation for these propositions, we give the following definitions:
\begin{definition}\label{covering}
If $A\subset \R^p$ is a non-empty set, then the \emph{covering radius} of a configuration $\omega_N=\{x_1,\ldots,x_N\}$ with respect to the set $A$ is
\begin{equation}\label{coveringrad}
\eta(\omega_N, A)=\sup_{x\in A}\min_{1\le i \le N}|x-x_i|\ .
\end{equation}
The \emph{minimal $N$-point covering radius of a set $A$ relative to the set $B$} is defined as
\begin{equation}\label{relcoverrad}
\eta_N(A,B):=\inf\left\{\eta(\omega_N,A):\ \omega_N\subset B\right\}\ .
\end{equation}
The \emph{minimal $N$-point covering radius $\eta_N(A)$ of $A$} and the \emph{minimal $N$-point unconstrained covering radius $\eta_N^*(A)$ of $A$} are given by:
\begin{equation}\label{covercover*}
\eta_N(A):= \eta_N(A,A)\ ,\quad\quad \eta_N^*(A):=\eta_N(A,\R^d)\ .
\end{equation}
\end{definition}
\begin{proposition}[{\cite[Thm.\,III.2.1, Thm.\,III.2.2]{prop:bosuwan2013two}}]\label{prop:bosuwan}
If $N\in \N$ is fixed and $A$ is an infinite compact subset of $\R^p$, then
\begin{eqnarray}
\lim_{s\to\infty}\left(\mathcal P_s(A,N)\right)^{1/s} &=&\frac1{\eta_N(A)}\ ,\label{sinfty}\\
\lim_{s\to 0^+}\frac{\mathcal P_s(A,N)-N}{s}&=&\mathcal P_0(A,N)\ .\label{szero}
\end{eqnarray}
%
Moreover, every cluster point of $\mathcal P_s(A,N)$-optimizers in \eqref{sinfty} is an optimal configuration for $\eta_N(A)$ and every cluster point of $\mathcal P_s(A,N)$-optimizers in \eqref{szero} is an optimal configuration for $\mathcal P_0(A,N)$.
\end{proposition}
The above results have their basis in the observations:
$$\lim_{s\to \infty}\left(\sum_{j=1}^N|x_j-a|^{-s}\right)^{1/s}= \left(\min_j(|x_j-a|\right)^{-1}\text{ and } \lim_{s\to 0^+}\frac{|x_j-a|^{-s}-1}{s}=-\log |x_j-a|.
$$

A generalization of \eqref{sinfty} for the problem $\mathcal P_s(A,B,N)$ is presented in \cite[\S 14.4]{book}. By the same proof as in \cite{prop:bosuwan2013two}, we find the analogous asymptotics for the $\mathcal P_s^*(A,N)$ problem:
\begin{proposition}\label{prop:bosuwan*}
The assertions of Proposition \ref{prop:bosuwan} hold if we replace $\mathcal P_s(A,N)$ and $\eta_N(A)$, respectively, by $\mathcal P_s^*(A,N)$ and $\eta_N^*(A)$.
\end{proposition}
An important case, which justifies the alternative name ``Chebyshev problem'', is the setting of $s=0$, $p=2$, namely the study of polarization problems for the kernel $K(x,y)=-\log|x-y|$ in $\mathbb R^2$, here identified with $\mathbb C$. Indeed let $A\subset \mathbb C$ be an infinite compact set. A monic complex polynomial $T_N^A$ of degree $N$ is called \emph{the Chebyshev polynomial of degree $N$ corresponding to the set $A$} if $\|T_N^A\|_A\le \|p\|_A$ for any monic complex polynomial $p$ of degree $N$, where
\begin{equation}\label{chebynorm}
\|p\|_A:=\max_{z\in A}|p(z)|
\end{equation}
is the max norm on the set $A$. Then denoting by $z_1,\ldots,z_N$ the zeros of the polynomial $p$ repeated according to their multiplicity and using an algebraic manipulation, we rewrite \eqref{chebynorm} in the equivalent form
\begin{equation}\label{chebynew}
\log\frac{1}{\|p\|_A}=\log\frac{1}{\max_{z\in A}\prod_{j=1}^N|z-z_j|}=\min_{z\in A}\sum_{j=1}^N\log\frac{1}{|z-z_j|},
\end{equation}
which directly gives a proof of the following well-known result:
\begin{proposition}\label{prop:cheby}
Let $A\subset \mathbb C$ be an infinite compact set. A multiset $\omega_N^*=\{z_1,\ldots,z_N\}$ is optimal for the maximal unconstrained polarization problem on $A$ with respect to the logarithmic potential if and only if $T(z)=(z-z_1)\cdots(z-z_N)$ is the Chebyshev polynomial for $A$. If $T_N^A$ is such a polynomial, then $\mathcal P_0^*(A,N)=\log(1/\|T_N^A\|_A)$.
\end{proposition}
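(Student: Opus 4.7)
The plan is to leverage identity \eqref{chebynew}, which already does essentially all of the work by converting the logarithmic polarization sum for a multiset $\omega_N$ into $\log(1/\|p\|_A)$ for the monic polynomial $p$ whose zero multiset is $\omega_N$. The key structural observation is the bijection between multisets $\omega_N=\{z_1,\ldots,z_N\}\subset\mathbb C$ and monic polynomials of degree $N$, given by $\omega_N\mapsto p_{\omega_N}(z)=\prod_{j=1}^N(z-z_j)$, whose inverse sends a polynomial to its zero multiset counted with multiplicity. Because multisets allow repetitions, this map is a genuine bijection rather than just a surjection.

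Under this bijection, \eqref{chebynew} reads
\begin{equation*}
P_0(A,\omega_N)=\log\frac{1}{\|p_{\omega_N}\|_A},
\end{equation*}
so taking the supremum over multisets $\omega_N\subset\mathbb C$, which by \eqref{psstar} is $\mathcal P_0^*(A,N)$, coincides with taking the infimum of $\|p\|_A$ over all monic polynomials of degree $N$. This immediately yields the ``if and only if'' characterization: $\omega_N^*$ attains the supremum defining $\mathcal P_0^*(A,N)$ precisely when $p_{\omega_N^*}$ attains the infimum defining a Chebyshev polynomial of $A$, and in that case $\mathcal P_0^*(A,N)=\log(1/\|T_N^A\|_A)$. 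Both directions of the equivalence are simultaneous consequences of the one identity.

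The only point that requires more than bookkeeping is the tacit assumption that one of these extrema is actually attained, which I would regard as the main (though minor) obstacle. Existence of a Chebyshev polynomial on the infinite compact set $A\subset\mathbb C$ is classical: $p\mapsto\|p\|_A$ is continuous on monic polynomials of degree $N$; because $A$ is infinite, this map is a norm on the affine space of such polynomials; and a monic polynomial of degree $N$ with some zero of modulus $R\gg\mathrm{diam}(A)$ has $\|p\|_A$ of order at least $R^N$, so minimizing sequences have zeros confined to a compact subset of $\mathbb C^N$ and a continuity/compactness argument produces a minimizer. I would either include this short argument or simply cite it from the standard references on Chebyshev polynomials, since the genuine content of the proposition is the identity \eqref{chebynew} itself.
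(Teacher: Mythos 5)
Your proof is correct and takes essentially the same route as the paper, which simply observes that identity \eqref{chebynew} directly yields the proposition; you have only spelled out the multiset--monic-polynomial bijection more explicitly and added the (standard) existence argument for the Chebyshev polynomial, which the paper leaves implicit by citing \cite{tsuji}.
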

It is well known that $T_N^A$ is unique (see \cite[Thm.\,III.23]{tsuji}), and that by a classical result of Fej\'{e}r \cite{Fe} the zeros of $T_N^A$ lie in the convex hull of $A$; but need not lie on $A$. For example, an application of the maximum modulus principle shows that $T_N^A(z)=z^N$ is the unique Chebyshev polynomial for the unit circle $A=\mathbb S^1$. This observation generalizes to the principle that optimal unconstrained polarization configurations may accumulate away from the set $A$ if $K(x,y)$ is superharmonic in $y$ (see Proposition~\ref{prop:slep-2} below).

\medskip

We recall (see \cite{ohtsuka1967various}) that in a very general setting we may relate the two-plate polarization problem to the
so-called \emph{continuous two-plate polarization (Chebyshev) constant} $T_K(A,B)$ defined in \eqref{t_kab} below. The next theorem describes the large $N$ limit of discrete two-plate polarization.
\begin{theorem}[{\cite{ohtsuka1967various}}]\label{thm:ohtsuka}
Let $X,Y$ be locally compact nonempty Hausdorff spaces, $A\subset X$ be compact nonempty and $B\subset Y$ be nonempty, and the kernel $K:X\times Y\to(-\infty,+\infty]$ be a lower semi-continuous function. Then
\begin{equation}\label{ohtsuka}
\lim_{N\to\infty}\frac{\mathcal P_K(A,B,N)}{N}=T_K(A,B),
\end{equation}
where
\begin{equation}\label{t_kab}
T_K(A,B):=\sup_{\mu\in\mathcal M_1(B)}\inf_{x\in A}\int K(x,y)d\mu(y)\in(-\infty,+\infty],
\end{equation}
and $\mathcal M_1(B)$ is the set of all probability measures with compact support contained in $B$.
\end{theorem}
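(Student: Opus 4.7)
The plan is to match $\limsup$ and $\liminf$ bounds for $\mathcal P_K(A,B,N)/N$ around $T_K(A,B)$. For the upper bound, given any multiset $\omega_N=\{y_1,\ldots,y_N\}\subset B$, the empirical measure $\mu_N:=\tfrac{1}{N}\sum_i\delta_{y_i}$ is finitely supported in $B$ and hence lies in $\mathcal M_1(B)$. Using $\mu_N$ as a competitor in the variational problem defining $T_K(A,B)$ gives
\[
\frac{P_K(A,\omega_N)}{N}\;=\;\inf_{x\in A}\int K(x,y)\,d\mu_N(y)\;\le\;T_K(A,B),
\]
and taking the supremum over $\omega_N$ yields $\mathcal P_K(A,B,N)/N\le T_K(A,B)$ for every $N$, hence $\limsup_N\mathcal P_K(A,B,N)/N\le T_K(A,B)$.

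For the substantive matching lower bound, fix $\epsilon>0$ and pick $\mu\in\mathcal M_1(B)$ with $\inf_{x\in A}U^\mu(x)\ge T_K(A,B)-\epsilon$, where $U^\mu(x):=\int K(x,y)\,d\mu(y)$. Using that $\mathrm{supp}(\mu)\subset B$ is compact, I would build configurations $\omega_N\subset\mathrm{supp}(\mu)$ by a standard quantization argument---partition $\mathrm{supp}(\mu)$ into Borel cells of equal $\mu$-mass $1/N$ that refine with respect to any prescribed finite open cover, and sample one point per cell---so that the corresponding empirical measures $\mu_N$ converge weakly to $\mu$. The aim is then to establish
\[
\liminf_N\inf_{x\in A}\int K(x,y)\,d\mu_N(y)\;\ge\;\inf_{x\in A}U^\mu(x),
\]
which combined with the choice of $\mu$ yields the bound after sending $\epsilon\downarrow 0$.

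To handle the fact that $K$ is only lower semicontinuous (and possibly $+\infty$), I would approximate it from below by a monotone sequence $K^{(m)}\nearrow K$ of bounded continuous functions on $X\times Y$, available by standard approximation in the locally compact Hausdorff setting (restricting to the compact $A\times\mathrm{supp}(\mu)$ where $K$ is bounded below). For each $m$, uniform continuity of $K^{(m)}$ on the compact set $A\times\mathrm{supp}(\mu)$ together with weak convergence $\mu_N\to\mu$ forces $U^{\mu_N}_m(x):=\int K^{(m)}(x,\cdot)\,d\mu_N$ to converge to $U^\mu_m(x):=\int K^{(m)}(x,\cdot)\,d\mu$ \emph{uniformly} in $x\in A$; consequently $\lim_N\inf_{x\in A}U^{\mu_N}_m=\inf_{x\in A}U^\mu_m$. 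Since $K\ge K^{(m)}$, this yields $\liminf_N\inf_{x\in A}\int K\,d\mu_N\ge \inf_{x\in A}U^\mu_m$ for every $m$.

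The step I expect to be the main technical nuisance is passing from $\inf_x U^\mu_m$ to $\inf_x U^\mu$ in the limit $m\to\infty$, since the infimum does not a priori commute with the monotone pointwise limit $U^\mu_m\uparrow U^\mu$. I would handle this by selecting minimizers $x_m\in A$ of each continuous function $U^\mu_m$ (available by compactness of $A$), extracting a subsequential limit $x_m\to x_\infty\in A$, and exploiting the two-parameter monotonicity: for fixed $m_0\le m$, continuity of $U^\mu_{m_0}$ gives $U^\mu_{m_0}(x_\infty)=\lim_m U^\mu_{m_0}(x_m)\le\lim_m U^\mu_m(x_m)=\lim_m\inf_x U^\mu_m$, and monotone convergence in $m_0\to\infty$ then gives $U^\mu(x_\infty)\le\lim_m\inf_x U^\mu_m$, so $\inf_x U^\mu\le\lim_m\inf_x U^\mu_m$. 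The reverse inequality is trivial from $U^\mu_m\le U^\mu$, closing the argument.
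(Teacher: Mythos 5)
The paper does not reproduce a proof of this result; it is cited from Ohtsuka's 1967 article, so there is no ``paper's own proof'' to compare against. Your argument is essentially the classical one and is correct. The upper bound via empirical measures is immediate. For the lower bound, the three ingredients you use---quantization of a near-optimal $\mu$ by empirical measures on $\operatorname{supp}\mu$, monotone approximation of the lower semicontinuous kernel from below by bounded continuous $K^{(m)}$, and the equicontinuity/Arzel\`a--Ascoli argument giving uniform convergence $U^{\mu_N}_m\to U^{\mu}_m$ on $A$---are exactly the standard toolkit, and the step you flagged as the main nuisance (interchanging $\inf_x$ with $\lim_m$ via a convergent subsequence of minimizers $x_m$ and two-parameter monotonicity) is handled correctly.

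Three small points to tighten. First, in the fully general locally compact Hausdorff setting of the theorem, a lower semicontinuous function bounded below on the compact set $A\times\operatorname{supp}\mu$ is the supremum of a \emph{net}, not necessarily a sequence, of continuous functions (and one must extract a \emph{subnet} $x_m\to x_\infty$, not a subsequence); the argument goes through verbatim with nets, but as written it implicitly assumes metrizability, which suffices for the paper's Euclidean applications. Second, ``cells of equal $\mu$-mass $1/N$'' need not exist for an arbitrary $\mu$ (e.g.\ in the presence of atoms); replace this by allocating $\lfloor N\mu(C)\rfloor$ or $\lceil N\mu(C)\rceil$ points per cell $C$ of a fine partition, which still yields $\mu_N\rightharpoonup^*\mu$. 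Third, when $T_K(A,B)=+\infty$ the phrase ``$T_K(A,B)-\epsilon$'' must be replaced by an arbitrary finite level $M<T_K(A,B)$; otherwise the argument is unchanged. An alternative, slightly cleaner route uses the superadditivity $\mathcal P_K(A,B,N+M)\ge\mathcal P_K(A,B,N)+\mathcal P_K(A,B,M)$ (split the multiset and use $\inf(f+g)\ge\inf f+\inf g$), so that Fekete's lemma gives existence of the limit as $\sup_N\mathcal P_K(A,B,N)/N$; one then only needs the two inequalities you proved to identify the value, without tracking $\limsup$ and $\liminf$ separately.
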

The integral $\int K(x,y)d\mu(y)$ in \eqref{t_kab} is called the {\em $K$-potential of $\mu$}.

\medskip

  Known results relating the discrete and continuous  one-plate  ($A=B$) polarization problems  for a {\em continuous} kernel  directly extend to the  two-plate case:
\begin{theorem}[{\cite[Prop.\,14.6.6]{book}}]\label{thm:contker}
Let $A$ and $B$ be two nonempty compact metric spaces, and $K\in C(A\times B)$. A sequence $\{\omega_N\}_{N=1}^\infty$ of $N$-point configurations on $B$ satisfies
\begin{equation}\label{limit_cont}
\lim_{N\to\infty} \frac{P_K(A,\omega_N)}{N}=T_K(A,B)
\end{equation}
if and only if every weak-$*$ limit measure $\mu$ of the sequence of the normalized counting measures
\[
\left\{\nu(\omega_N):=\frac{1}{N}\sum_{x\in\omega_N}\delta_x\right\}_{N=1}^\infty
\]
is an extremal measure for the continuous $2$-plate polarization problem; i.e., it satisfies
\begin{equation}\label{extmeasure}
T_K(A,B)=\inf_{x\in A}\int K(x,y)d\mu(y).
\end{equation}
\end{theorem}

We remark that there are few  results 
regarding uniqueness of the extremal measure for the above problem (e.g., see \cite{2013erdelyisaff},  \cite{reznikov2016minimum}, and \cite{Simanek2015}).

\subsection{Main results}

 Our first important property can be viewed as a generalization of
the aforementioned result of Fej\'{e}r \cite{Fe} for zeros of Chebyshev polynomials.
 It asserts that for a large class of kernels, the two problems $P_K^*(A,N)$ and $P_K(A,N)$  are equivalent when $A$ is convex. Hereafter, we always assume $A,B\subset \mathbb R^p, A\neq\emptyset$ and let $\op{conv}(A)$ denote the \emph{convex hull} of $A$.
\begin{proposition}\label{prop:covex_equal}
Let $f:[0,+\infty)\to(-\infty,+\infty]$ be a strictly decreasing function and let $K(x,y):=f(|x-y|)$. If $A\subset\mathbb R^p$ is a compact set, then any configuration $\omega_N^*=\{x_1,\ldots,x_N\}$ such that $P_K(A,\omega_N^*)=\mathcal P_K^*(A,N)<+\infty$ has the property that $x_i\in\op{conv}(A)$ for each $1\le i\le N$.
In particular, if $A$ is convex and $\mathcal P_K^*(A,N)<+\infty$ then $\omega_N^*\subset A$ and $\mathcal P_K(A,N)=\mathcal P_K^*(A,N)$.
\end{proposition}
\begin{proof}
Assume to the contrary that some point of $\omega_N^*$, say $x_1$, satisfies $x_1\notin \op{conv}(A)$. Then after replacing $x_1$ by the nearest-point projection $\pi_{\op{conv}(A)}(x_1)$, the sum $\sum_{i=1}^N K(x_i,y)$ strictly increases, contradicting the optimality of $\omega_N^*$.
\end{proof}

In the following result and hereafter we denote by $\# \omega$ the cardinality of a multiset $\omega$ including repetitions.  We utilize the following  notation for the   $\epsilon$-neighborhood of a set:
\begin{equation}\label{fatten_a}
A_\epsilon:=\{x\in\R^p:\ \op{dist}(x, A)<\epsilon\}.
\end{equation}

Our next result is also a generalization of a well known property of
the zeros of Chebyshev polynomials $T_N^A$ that lie outside the polynomial 
convex hull of the set $A\subset \mathbb{C}.$ Namely, for any compact subset $F$ of the
unbounded component of the complement of $A,$ there is a number $M=M_F$
depending only on $F$ such that each $T_N^A$ has at most $M$ zeros in $F$
(see e.g. \cite[Theorem III.3.4]{SaTo}). The proof of the following theorem which concerns Riesz  kernels will be given in Section \ref{sec:proof_bound_K}.

\begin{theorem}\label{thm:bound_K}
Let $A\subset\R^p$ be a compact set and assume $s > p-2$, $p\ge 2$. There exist $\kappa_{s,p}, c_{s,p}>0$ depending only on $p$ and $s$, such that for every $\epsilon>0$, if $\mathcal P_s^*(A,N)<+\infty$ and $\omega_N^*=\{x_1,\ldots,x_N\}$ satisfies
\begin{equation}\label{optimizing}
P_s(A,\omega_N^*)=\mathcal P_s^*(A,N),
\end{equation}
(i.e., $\omega_N^*$ is an unconstrained $N$-point maximal $s$-Riesz polarization  configuration),
then
\begin{equation}\label{bound_k_1}
\# \left(\omega_N^*\setminus A_\epsilon\right)\le \kappa_{s,p}\frac{\mathcal L_p\left(\left(\op{conv}(A)\right)_{\epsilon c_{s,p}}\right)}{\epsilon^p},
\end{equation}
where $\mathcal L_p$ denotes $p$-dimensional  Lebesgue measure on $\mathbb R^p$.
\end{theorem}
As a direct consequence of Theorem \ref{thm:bound_K}, for any compact set $B$ in the complement of $A$ the number of points of $\omega_N^*$ in $B$ is uniformly bounded in $N$.  In particular, we get the following:
\begin{corollary}\label{cor:weakconv}
Under the hypotheses of Theorem \ref{thm:bound_K}, if $\left(\omega_N^*\right)_{N\in \N}$ is a sequence of unconstrained $N$-point maximal $s$-Riesz polarization  configurations, then any weak* cluster point  of the sequence 
\begin{equation}
\nu(\omega_N^*):=\frac{1}{N}\sum_{x\in\omega_N^*}\delta_{x } ,\qquad N=1,2,3,\ldots,
\end{equation}
 is a probability measure supported on $A$.
\end{corollary}
 We now  describe  new asymptotic results for $\mathcal P_s^*(A,N)$ for fixed $s$ and $N\to\infty$ and their connection to the previously known asymptotics for $\mathcal P_s(A,N)$.  We define the \emph{renormalization factors} and relevant asymptotic quantities as follows:
\begin{equation}\label{tau_sd}
\tau_{s,d}(N):=\left\{
\begin{array}{ll}
N^{s/d},&\ s>d,\\
N\log N,&\ s=d,\\
N,&\ s<d,
\end{array}
\right.
\end{equation}
and
\begin{subequations}\label{h_sd}
\begin{equation}
\underline h_{s,d}^*(A):=\liminf_{N\to\infty}\frac{\mathcal P_s^*(A,N)}{\tau_{s,d}(N)},\quad \overline h_{s,d}^*(A):=\limsup_{N\to\infty}\frac{\mathcal P_s^*(A,N)}{\tau_{s,d}(N)}.
\end{equation}
If $\underline h_{s,d}^*(A)=\overline h_{s,d}^*(A)$, we set
\begin{equation}
h_{s,d}^*(A):=\lim_{N\to\infty}\frac{\mathcal P_s^*(A,N)}{\tau_{s,d}(N)}.
\end{equation}
\end{subequations}
For the problem $\mathcal P_s(A,N)$, the quantities $\underline h_{s,d}(A)$, $\overline h_{s,d}(A)$ and $h_{s,d}(A)$ were analogously defined in \cite{2014borodachovbosuwan} and \cite{bhrs2016preprint}.

\medskip

As a consequence of Theorem \ref{thm:bound_K} in combination with a new geometric deformation technique for optimizers of $\mathcal P_s^*(A,N)$ given in Proposition \ref{prop:replace_points}, we provide conditions that the  asymptotics of $\mathcal P_s^*(A,N)$ are equal to those of $\mathcal P_s(A,N)$.  For this purpose, we make use of the following definition.
\begin{definition}\label{def:regularset}
Let $d'>0$ and let $p>0$ be an integer. A compact set $A\subset\mathbb R^p$ is \emph{$d'$-regular} if there exists a measure $\lambda$ supported on $A$ and a positive constant $C$ such that for any $x\in A$ and $r<\op{diam}(A)$ there holds
\begin{equation}\label{regularset}
C^{-1}r^{d'}\le \lambda(B(x,r))\le Cr^{d'}\ .
\end{equation}
%
\noindent A measure $\mu$ is called \emph{upper-$d$-regular} at $x$ if for some constant $c(x)$ and any $r>0$ there holds
\begin{equation}\label{regularmeasure}
\mu(B(x,r))\le c(x)r^d\ .
\end{equation}
\end{definition}

Hereafter, we denote by $\mathcal H_d$ the $d$-dimensional Hausdorff measure on $\R^p$, $d\le p$,   normalized so that the $\mathcal H_d$-measure
of a $d$-dimensional unit cube embedded in $\R^p$ is 1. Furthermore, for a compact set $A\subset \mathbb R^p$ with $0\le s<\mathrm{dim}_{\mathcal H}(A)$ (where $\mathrm{dim}_{\mathcal H}$ denotes the Hausdorff dimension), the \emph{equilibrium measure} $\mu_{s,A}$ is the unique probability measure supported on $A$ that minimizes
\[
\int\int K_s(x,y) d\mu(x)\,d\mu(y)
\]
over all probability measures supported on $A$.
\begin{theorem}\label{thm:compact_body_old}
For integers $p,d$ such that $p\ge 2$, $1\le d\le p$ and $A\subset\R^p$ compact, suppose that one of the following conditions holds:
\begin{enumerate}
\item[\rm(i)] $s>\max\{d,p-2\}$ and $\mathcal H_d(A)>0$,
\item[\rm(ii)] $p-2\le s<d$ and for some $d\le d'\le p$, the set $A$ is $d'$-regular and the equilibrium measure $\mu_{s,A}$ on $A$ is upper $d$-regular at every point $x\in A$.
\end{enumerate}

If the limit $h_{s,d}^*(A)$ exists as an extended real number, then the limit $h_{s,d}(A)$ also exists and
\begin{equation}\label{asymptotics_eq}
h_{s,d}(A)=h_{s,d}^*(A).
\end{equation}
Furthermore, if   (ii) holds, then $h_{s,d}^*(A)$ exists and is   finite; consequently \eqref{asymptotics_eq}
holds.
\end{theorem}
%

We remark that our method of proof of Theorem~\ref{thm:compact_body_old} given in Section~\ref{sec:pf_compact_body_old} requires the weak-separation result from Proposition~\ref{prop:pointsep} which makes use of the assumptions  (i) and (ii) above.   


\medskip

Concerning the actual values of the quantities in \eqref{asymptotics_eq}, it is established  in  \cite{bhrs2016preprint} (also, see \cite[Chapter 14]{book}) that, for $A$ equal the unit cube in $\R^p$ and $s\ge p$, the limit 
\begin{equation}\label{sigmaspdef}
\sigma_{s,p}:=h_{s,p}([0,1]^p)
\end{equation}
exists as a finite and positive number.

In preparation for the following theorems, we say that a sequence of $N$-point configurations, denoted $\Omega:=\{\omega_N\}_{N\ge 1}$, is  \emph{asymptotically extremal for the unconstrained problem} if $\lim_{N\to\infty}P_s(A,\omega_N)/\mathcal P_s^*(A,N) =1$, with a similar definition for the constrained problem.

\begin{theorem}\label{thm:compact_body}
   If $A\subset\mathbb R^p$ is a compact set and $s\ge p$, then 
\begin{equation}\label{asymptotics}
h_{s,p}^*(A)=h_{s,p}(A)=\frac{\sigma_{s,p}}{\mathcal L_p(A)^{s/p}}.
\end{equation}
Moreover, if $\mathcal L_p(A)>0$, then for any asymptotically extremal sequence $\Omega=\{\omega_N\}_{N\ge 1}$ (for either the constrained or unconstrained polarization problem)  we have the weak-$*$ convergence
\begin{equation}
\frac{1}{N}\sum_{x_i\in\omega_N}\delta_{x_i}\stackrel{*}{\rightharpoonup}\frac{\mathcal L_p|_A}{\mathcal L_p(A)}\quad\mbox{as}\quad N\to\infty,
\end{equation}
where $\mathcal L_p|_A:=\mathcal{L}_p(\cdot\cap A)$ is the restriction to $A$ of  $\mathcal L_p$.


\end{theorem}
%
{\bf See Appendix \ref{app-corrigendum} for a correction to the statement and proof of Theorem \ref{thm:compact_body}.}

\bigskip

 We further note the following:
\begin{itemize}
\item The second equality in \eqref{asymptotics}   for $s>p$  improves upon the corresponding constrained result in \cite{bhrs2016preprint} which  required the additional assumption that $\mathcal L_p(\partial A)=0$.
\item For $s=p$, the second equality in \eqref{asymptotics} was proved in 
\cite{2014borodachovbosuwan}.
\item Our next result, Theorem~\ref{thm:rectifiable}, is a generalization of Theorem~\ref{thm:compact_body} since the hypotheses of the former are trivially satisfied for $d=p$.    Theorem~\ref{thm:compact_body} is stated separately because it plays an essential role in the proof of the more general theorem and is of independent interest.
\item As shown in \cite{2013erdelyisaff}, it is known that $\sigma_{p,p}=\beta_p$, which is the volume of the $p$-dimensional unit ball.
\item As follows by the result of \cite{hks2013} for $\mathcal P_s(\mathbb S^1,N)$, $\sigma_{s,1}=2\zeta(s)(2^s-1)$ for $s>1$.
\item For $p=2, s>2$, the conjecture in \cite[\S2]{bhrs2016preprint} for $\sigma_{s,2}$ is equivalent to the conjecture that  $\sigma_{s,2}=(3^{s/2}-1)\zeta_\Lambda(s)/2$, where
\begin{equation}\label{zeta}
\zeta_\Lambda(s)=\sum_{(m,n)\in\mathbb Z^2\setminus\{(0,0)\}}\frac1{\left((n+m/2)^2+3m^2/4\right)^{s/2}}
\end{equation}
is the Epstein zeta-function for the hexagonal lattice $\Lambda\subset\R^2$.
\end{itemize}
%
%
%
%

Theorem~\ref{thm:compact_body} provides asymptotics for compact sets of full dimension.   We next consider embedded sets for which it is necessary to consider certain geometric constraints.
Following \cite{federer}, we say that a set $A\subset \mathbb R^p$ is \emph{$d$-rectifiable} if it can be written as $\phi(K)$ for $K\subset\mathbb R^d$ bounded and $\phi:K\to\mathbb R^p$ Lipschitz.  We note the following two important properties of a $d$-rectifiable set $A$ (see  \cite[Theorems 3.2.18 \&  3.2.39]{federer}):
 (a) if $A$ is closed then $\mathcal H_d(A)$ equals the $d$-dimensional Minkowski content $\mathcal M_d(A)$ (see Definition~\ref{def:mink} for the definition of Minkowski content) and (b)
for each $\epsilon>0$, $A$ can be written as the disjoint union
\begin{equation}\label{Adecomp}
 A=A_0\cup \bigcup_{j=1}^\infty \varphi_j(K_j),
\end{equation}
where $\mathcal H_d(A_0)=0$, the maps $\varphi_j:K_j\to\varphi_j(K_j)$ are $(1+\epsilon)$-biLipschitz and $K_j\subset\mathbb R^d$ are compact sets.
We remark that any set $A\subset \R^p$ of the form \eqref{Adecomp} is called {\em$(\mathcal H_d,d)$-rectifiable}.
Setting
\begin{equation}\label{strong_rect}
 R_k:=A_0\cup\bigcup_{j=k+1}^\infty\varphi_j(K_j),
\end{equation}
we may take $k$ large enough so that $\mathcal H_d(R_k)<\epsilon$.

For $\epsilon>0$ and positive integers $d\le p$, we say that ${\mathcal G}$ is a {\em $d$-dimensional $\epsilon$-Lipschitz graph in $\R^p$} if there is a $d$-dimensional subspace $H\subset \R^p$ and an
 $\epsilon$-Lipschitz mapping $\psi: H\to H^\perp$    such that $\mathcal{G}= \{h+\psi(h):h \in H\}$.   It is useful to note that given  an isometry $\iota:\R^d\to H$, the mapping $\varphi:\R^d\to \mathcal{G}$ defined by $\varphi(x)=\iota(x)+\psi(\iota(x))$ is a $(1+\epsilon)$-biLipschitz mapping.
Now we introduce the following stronger requirement, needed below.
\begin{definition}\label{sdrectDef}
We say that $A\subset \R^p$ is {\em strongly $(\mathcal H_d,d)$-rectifiable}
if for each $\epsilon>0$ there exist a  compact set  $R_\epsilon\subset \R^p$ with $\overline{\mathcal M}_d(R_\epsilon)<\epsilon$ and finitely many compact, pairwise disjoint sets $\widetilde {K}_1,\ldots, \widetilde{K}_k \subset \R^p$ such that
\begin{equation}\label{Bdecomp}
 A=R_\epsilon\cup\bigcup_{j=1}^k \widetilde{K}_j,
\end{equation}
where each $\widetilde K_j$ is contained in some $d$-dimensional $\epsilon$-Lipschitz graph $\mathcal{G}_j$ in $\R^p$.  \end{definition}
As shown in Lemma \ref{C1case}, a compact subset of a $C^1$-manifold of dimension $d$ contained in $\R^p$, $d\le p$, is strongly $(\mathcal H_d,d)$-rectifiable.
It can be shown that any strongly $(\mathcal H_d,d)$-rectifiable set is $(\mathcal H_d,d)$-rectifiable.
\medskip

%
%
\begin{theorem}\label{thm:rectifiable}
Let $d$ and $p$ be positive integers with $d\le p$, and $A\subset\R^p$ be a compact strongly $(\mathcal H_d,d)$-rectifiable set. If $s>d$, then
\begin{equation}\label{thm_rect_eq}
h_{s,d}(A)=h_{s,d}^*(A) =\frac{\sigma_{s,d}}{\left[\mathcal H_d(A)\right]^{s/d}}.
\end{equation}
Moreover, if $\mathcal H_d(A)>0$, then for any asymptotically $K_s$-extremal sequence $\Omega=\{\omega_N\}_{N\ge 1}$ (for either the constrained or unconstrained polarization problem)  we have the weak-$*$ convergence
\begin{equation}
\frac{1}{N}\sum_{x_i\in\omega_N}\delta_{x_i}\stackrel{*}{\rightharpoonup}\frac{\mathcal H_d|_A}{\mathcal H_d(A)}\quad\mbox{as}\quad N\to\infty,
\end{equation}
where $\mathcal H_d|_A:=\mathcal{H}_d(\cdot\cap A)$ is the restriction to $A$ of the Hausdorff measure $\mathcal H_d$.
\end{theorem}

\subsection*{Outline of the paper}
Section \ref{sec:resspheres} includes some results and conjectures for unconstrained polarization on the circle and on higher dimensional spheres. Sections \ref{sec:proof_bound_K}, \ref{sec:pf_compact_body_old}, \ref{sec:pf_compact_body} and \ref{sec:pf_rectifiable} are dedicated to auxiliary results and proofs of the main results stated in the introduction. Section \ref{sec:mink} contains a bound of independent interest, stated in Proposition \ref{prop:mink_bound}, and later used in the proof of Theorem \ref{thm:rectifiable} in Section \ref{sec:pf_rectifiable}. Finally, in Section \ref{sec:conj_o_p} we discuss some open problems related to polarization.
%
%
%
%
\section{Results for the case of spheres $\mathbb S^{p-1}\subset\mathbb R^p$}\label{sec:resspheres}
This section is dedicated to results for an important special case, the unconstrained polarization on the unit sphere $\mathbb S^{p-1}\subset\mathbb R^p$. We start with the following simple result, valid for rather general kernels.
\begin{proposition}\label{prop:small_N}
Let $f:[0,+\infty)\to (-\infty, +\infty]$ be a strictly decreasing function and $K(x,y):=f(|x-y|)$. If $p\ge 2$,   $1\le N\le p$ and $\omega_N^*=\{x_1,\ldots,x_N\}$ satisfies
\begin{equation}\label{opt_gen_K}
\mathcal P_K^*(\mathbb S^{p-1},N)=\min_{y\in A}\sum_{i=1}^N K(x_i,y)\ ,
\end{equation}
then $x_j=0$ for all $1\le j\le N$.
\end{proposition}
\begin{proof}
Let $N'\ge 1$ be the smallest natural number such that there exists an $N'$-point configuration $\omega_{N'}^*=\{x_1,\ldots,x_{N'}\}$ satisfying \eqref{opt_gen_K} such that for some $1\le j\le {N'}$ there holds $x_j\neq 0$. We will prove by contradiction that $N'\ge d+1$, which is equivalent to our statement.

\medskip

Up to reordering the points, there exists $k\in\{0,\ldots,N'\}$ such that
\[
x_j\neq 0\mbox{ for }j=1,\ldots,k\ ,\qquad x_j=0\mbox{ for }j=k+1,\ldots,N'\ .
\]
Let $\omega_{N'}^0:=\{0,\ldots,0\}$ the configuration composed of $N'$ instances of the origin. Then
\begin{equation}\label{Nprime_better_0}
0\le P_K(\mathbb S^{p-1},\omega_{N'})-P_K(\mathbb S^{p-1},\omega_{N'}^0)=P_K(\mathbb S^{p-1},\{x_1,\ldots,x_k\})-P_K(\mathbb S^{p-1},\omega_k^0)\ .
\end{equation}
thus by the minimality of $N'$ we obtain $k=0$, and \emph{all} points in $\omega_{N'}^*$ are away from the origin.

\medskip

As $f$ is decreasing, for each $x\in \mathbb R^p$ the set $S_x$ composed of all points $y$ at which the potential generated by $0$ is higher than that generated by $x$ is a half-space containing the origin. More precisely,
\begin{equation}\label{0_better_xi}
S_x:=\{y\in \mathbb R^p:\ K(0,y)> K(x,y)\}=\{y\in \mathbb R^p:\ \langle x,y\rangle< |y|/2\}\ .
\end{equation}
The intersection of $N'$ half-spaces $S_{x_1},\ldots,S_{x_{N'}}$ is a convex set containing the origin. If $N'<d+1$ this intersection is also unbounded, and thus it intersects $\mathbb S^{p-1}$ at some point $y_0$. Therefore, using \eqref{0_better_xi}, for $N'<d+1$ we find
\begin{equation}\label{0_better_forallxi}
\forall 1\le i\le N'\ ,\quad K(0,y_0)> K(x_j,y_0)\ .
\end{equation}
Summing up the inequalities \eqref{0_better_forallxi}, we find a contradiction to \eqref{Nprime_better_0}, and thus $N'\ge d+1$, as desired.
\end{proof}
By Theorem~\ref{thm:bound_K}, for subharmonic Riesz kernels (i.e. for $s>p-2\ge 0$), points do not accumulate away from $A$. In contrast, the following result demonstrates that the opposite property can hold for superharmonic Riesz potentials.  
This proposition generalizes a result from \cite{2013erdelyisaff}.
\begin{proposition} \label{prop:slep-2}
Fix $p\ge2$ and $s\in (-\infty, p-2]$. If the compact set $A\subset\mathbb R^p$ is such that
\begin{equation}\label{a_has_hole}
\mathbb S^{p-1}\subset A\subset \mathbb B^p\ ,
\end{equation}
where $\mathbb B^p$ denotes the unit ball in $\mathbb R^p$ centered at the origin, then a multiset $\omega_N^*=\{x_1,\ldots,x_N\}\subset\mathbb R^p$ satisfies
\begin{equation}\label{optimizer_K}
P_s(A,\omega_N^*)=\mathcal P_s^*(A,N)\ ,
\end{equation}
if and only if $x_i=0$ for all $i\in\{1,\ldots, N\}$.
\end{proposition}
\begin{proof}
We first note that for all $-\infty<s\le p-2$ the function $K_s(x,y)=f_s(|x-y|)$ as defined in \eqref{def_Ks} is superharmonic in $x$ and in $y$ separately.

\medskip

\textbf{Step 1.} \emph{The case $A=\mathbb S^{p-1}$.} We consider the case $A=\mathbb S^{p-1}$ first. Let $\omega_N^*=\{x_1,\ldots,x_N\}$ and $y^*\in \mathbb S^{p-1}$ are such that there holds
\begin{equation}\label{optimal_K}
\mathcal P_s^*(\mathbb S^{p-1},N)=P_s^*(\mathbb S^{p-1}, \omega_N^*)=\sum_{i=1}^NK_s(x_i,y^*)\ .
\end{equation}
We assume that the points composing $\omega_N^*$ are ordered so that for some $0\le N_0\le N$ there holds $x_1,\ldots,x_{N_0}\neq 0$ and $x_{N_0+1}=\cdots=x_N=0$. For any choice of $y_0\in \mathbb S^{p-1}$ and denoting $\mu_{SO(p)}$ the right-invariant Haar measure on $SO(p)$, there holds
\begin{eqnarray}
P_s^*(\mathbb S^{p-1},\omega_N^*)-Nf_s(1)&=&\sum_{i=1}^{N_0}K_s(x_i,y^*)-N_0f_s(1)=\min_{y\in \mathbb S^{p-1}}\sum_{i=1}^{N_0}f_s(|x_i-y|)-N_0f_s(1)\nonumber\\
&\le& \int_{SO(p)}\sum_{i=1}^{N_0}f_s(|x_i-Ry_0|)d\mu_{SO(p)}(R)-N_0f_s(1)\label{ineq}\\
&=&\int_{SO(p)}\sum_{i=1}^{N_0}f_s(|R^{-1}x_i-y_0|)d\mu_{SO(p)}(R)-N_0f_s(1)\label{isometry}\\
&=&\sum_{i=1}^{N_0}\frac{1}{\mathcal H_{p-1}(\partial B(0,|x_i|))}\int_{\partial B(0,|x_i|)}f_s(|x-y_0|)d\mathcal H_{p-1}(x)-N_0f_s(1)\nonumber\\
&\le&0 \label{superharm_use}\ ,
\end{eqnarray}
where in \eqref{isometry} we used the fact that rotations $R\in SO(p)$ preserve distances and in \eqref{superharm_use} we used the fact that $f_s(|x-y|)$ is superharmonic in $x$. By \eqref{optimal_K} this shows that the choice $x_i=0$ for all $1\le i\le N$, realizes the optimum in $\mathcal P_s^*(\mathbb S^{p-1},\omega_N^*)$. On the other hand, in order for $\omega_N^*$ to be an optimizer, inequalities \eqref{ineq} and \eqref{superharm_use} must become equalities, thus the value $\sum_{i=1}^{N_0}f_s(|x_i-y|)$ is constant in $y\in\mathbb S^{p-1}$ and hence the multiset $\omega_N^*$ is invariant under rotation. This in turn is possible only if all the points $x_i$ are at the origin, as desired.

\medskip

\textbf{Step 2.} \emph{The case $A=\mathbb B^p$.} In this case by Proposition \ref{prop:covex_equal} we have that the problem reduces to the classical constrained polarization, and the statement was proved in \cite{2013erdelyisaff}.

\medskip

\textbf{Step 3.} \emph{General case $\mathbb S^{p-1}\subset A\subset\mathbb B^p$.} Due to \eqref{inclusion}, \eqref{superharm_use} and Step 2, we have $\mathcal P^*_s(A,N) = Nf_s(1)$ as well. For any multiset $\{x_1,\ldots,x_N\}$ there holds
\begin{equation}\label{bound_gen_A}
\min_{y\in \mathbb B^p}\sum_{i=1}^NK_s(x_i,y)\le\min_{y\in A}\sum_{i=1}^NK_s(x_i,y)\le
\min_{y\in\mathbb S^{p-1}}\sum_{i=1}^NK_s(x_i,y)\ ,
\end{equation}
which implies that the multiset with $x_i=0$ for all $1\le i\le N$ is an optimizer for $\mathcal P_s^*(A,N)$. If by contradiction a distinct optimizer would exist, then it would be an optimizer also for $\mathcal P_s^*(\mathbb S^{p-1},N)$, which is excluded by Step 1. This concludes the proof of Proposition \ref{prop:slep-2}.\end{proof}
The following proposition describes the case where $s>p-2$, $s\neq p-1$, and establishes that $\mathcal P_s^*$-optimal configurations $\omega_N^*$ for $\mathbb S^{p-1}$ lie at a positive distance from $\mathbb S^{p-1}$. We conjecture that the result continues to hold for $s=p-1$, but the proof is left to future work.
\begin{proposition}\label{prop:points_notonsphere}
Let $p\ge 2$ and $s>p-2$, $s\neq p-1$. Then there exists a constant $C>0$ depending only on $s$ and $p$, such that for any $N$-point multiset $\omega_N^*$ satisfying $P_s(\mathbb S^{p-1},\omega_N^*)=\mathcal P_s^*(\mathbb S^{p-1},N)$ there holds
\begin{equation}\label{points_notonsphere}
\op{dist}(\omega_N^*,\mathbb S^{p-1})\ge CN^{-2/(p-1)}\ .
\end{equation}
\end{proposition}
\begin{proof}
For the proof, we will use Proposition \ref{prop:max_away_points} below, with $s,p$ as in the statement of the proposition and $A=\mathbb S^{p-1}$. We need to verify that the hypotheses of Theorem \ref{thm:compact_body_old} (which are inherited by Proposition \ref{prop:max_away_points}) hold. Note that by Proposition \ref{prop:kth_moment}, the equilibrium measure $\mu_{s,\mathbb S^{p-1}}$ is the uniform measure on $\mathbb S^{p-1}$ and that $\mathcal H_{p-1}(\mathbb S^{p-1})>0$. Therefore the hypotheses of point (i) from Theorem \ref{thm:compact_body_old} hold if $s>p-1$ and the conditions of point (ii) from the same theorem hold if $p-2<s<p-1$.

\medskip

By \eqref{max_away_points} of Proposition \ref{prop:max_away_points} there exists a constant $C_1>0$ independent of $N$ such that the minimum value
\[
\mathcal P_s^*(\mathbb S^{p-1},N)=\min_{y\in \mathbb S^{p-1}}\sum_{x\in\omega_N^*}|x-y|^{-s}
\]
is not achieved at $y\in B(x_0,C_1N^{-1/(p-1)})$. Since the function $|x-y|^{-s}$ is continuous away from the diagonal, there exists $\epsilon>0$ such that
\begin{equation}\label{min_far}
\min_{y\in B(x_0,C_1N^{-1/(p-1)})}\sum_{x\in\omega_N^*}|x-y|^{-s}\ge \mathcal P_s^*(\mathbb S^{p-1},N)+\epsilon\ .
\end{equation}
We claim that 
\begin{equation}\label{claimnew}
x_0\in\op{conv}(\mathbb S^{p-1}\setminus B(x_0,C_1N^{-1/(p-1)})).
\end{equation}
Assume by contradiction that this is not true and that
\begin{equation}\label{x0_out_convhull}
\op{dist}\left(x_0,\op{conv}(\mathbb S^{p-1}\setminus B(x_0,C_1N^{-1/(p-1)}))\right)>0.
\end{equation}
Let $x_0'\neq x_0$ be the projection of $x_0$ on $\op{conv}(\mathbb S^{p-1}\setminus B(x_0,C_1N^{-1/(p-1)}))$. Since $|x-y|^{-s}$ is decreasing in $|x-y|$, we find that for any $x_0''$ in the segment $(x_0,x_0']$ there holds
\begin{equation}\label{better_min}
\forall y\in\mathbb S^{p-1}\setminus B(x_0,C_1N^{-1/(p-1)})\ ,\quad\sum_{x\in\omega_N^*}|x-y|^{-s}<\sum_{x\in\omega_N^*\setminus\{x_0\}}|x-y|^{-s}+|x_0''-y|^{-s}\ .
\end{equation}
On the other hand, by \eqref{min_far}, \eqref{x0_out_convhull} and by the continuity of $|x-y|^{-s}$ for $x\neq y$, there exists an open neighborhood $U_\epsilon$ of $x_0$ in the segment $[x_0,x_0']$ such that for $x_0''\in U_\epsilon$ there holds
\begin{equation}\label{min_far_2}
\min_{y\in B(x_0,C_1N^{-1/(p-1)})}\sum_{x\in\omega_N^*\setminus\{x_0\}}|x-y|^{-s}+|x_0''-y|^{-s}\ge \mathcal P_s^*(\mathbb S^{p-1},N)+\epsilon/2\ .
\end{equation}
By \eqref{better_min} and \eqref{min_far_2} there holds $P_s(\mathbb S^{p-1},(\omega_N^*\setminus\{x_0\})\cup\{x_0''\})\ge \mathcal P_s^*(\mathbb S^{p-1},N) +\epsilon/2$, giving the desired contradiction to \eqref{x0_out_convhull}. Thus \eqref{claimnew} holds. 

\medskip

It follows that for any point $y\in \partial B(x_0,C_1N^{-1/(p-1)})\cap\mathbb S^{p-1}$ there holds $\langle x_0-y,x_0\rangle \le 0$ and thus $1=|y|\le |x_0|^2+|x_0-y|^2=|x_0|^2+C_1^2N^{-2/(p-1)}$, from which it follows that $|x_0|\le\sqrt{1-C_1^2N^{-2/(p-1)}}\le 1-\frac{C_1^2}{2}N^{-2/(p-1)}$ and the thesis follows with $C=C_1^2/2$.
\end{proof}
The next result states the equivalence of constrained and unconstrained covering problems, which is a well-known property of spherical coverings:
\begin{proposition}\label{prop:constr-unconstr_sphere}
Let $p\ge 2$ and $N\in \mathbb N$.
\begin{itemize}
\item If $N\le p$, then a configuration realizing the infimum $\eta_N^*(\mathbb S^{p-1})$ in \eqref{covercover*} is given by taking all the $N$ points at the origin of $\mathbb R^p$.
\item If $N\ge p+1$, then for every configuration $\omega_N=\{x_1,\ldots,x_N\}\subset \mathbb S^{p-1}$ that realizes the infimum $\eta_N(\mathbb S^{p-1})$ in \eqref{covercover*}, then $\omega_N^*:=\{r_N x_1, \ldots,r_Nx_N\}$ realizes the infimum $\eta_N^*(S^{p-1})$ in \eqref{covercover*} for $r_N=\sqrt{1-(\eta_N^*)^2(\mathbb S^{p-1})}$. Furthermore,
\begin{equation}\label{relation_etas}
(\eta_N^*)^2(\mathbb S^{p-1}) = \eta_N^2(\mathbb S^{p-1})-\frac14 \eta_N^4(\mathbb S^{p-1}).
\end{equation}
\end{itemize}
\end{proposition}
\begin{proof}
It is not difficult to verify that $\mathbb S^{p-1}$ cannot be covered by $p$ balls of radius less than $1$. This  can be proved by induction on the dimension using the fact that if $r\in(0,1), y\in\mathbb R^p$, then $\mathbb S^{p-1}\setminus B(y,r)$ contains a congruent copy of $\mathbb S^{p-2}$ as long as $p\ge 2$, and the observation that $\mathbb S^0=\{\pm1\}$ requires at least two balls of radius $r$ to be covered. It follows that $\eta_N^*(\mathbb S^{p-1})=1$: a configuration realizing this infimum is given by the case when all the $N$ points are at the origin of $\mathbb R^p$. This proves the first item of the proposition.

\medskip

\noindent If $N\ge p+1$ then $\eta_N(\mathbb S^{p-1})<\sqrt 2$ and $\eta_N^*(\mathbb S^{p-1})<1$, a bound shown by rough estimates for competitor configurations for $\eta_N(\mathbb S^{p-1})$ where $p+1$ of the points form a regular simplex, and those for $\eta_N^*(\mathbb S^{p-1})$ sit at centroids of the faces of such simplex.

\medskip

\noindent In order to compare the two covering problems with minima $\eta_N(\mathbb S^{p-1})$ and $\eta_N^*(\mathbb S^{p-1})$ for $N\ge p+1$, we introduce some notations, as follows. For $y\in\mathbb B^p\setminus\{0\}$ the set $\partial B(y,\rho)\cap \mathbb S^{p-1}$ is nonempty if and only if $\rho\in[1-|y|,1+|y|]$. For these choices of $y,\rho$, we note the following:
\begin{itemize}\item $\partial B(y,\rho))\cap \mathbb S^{p-1}$ is a congruent copy of a $(p-2)$-dimensional sphere of radius given by $f(|y|,\rho)=\frac{\sqrt{4|y|^2-(|y|^2+1-\rho^2)^2}}{2|y|}$. Moreover, for fixed $\rho\in(0,1)$ the function $(1-\rho,1)\ni|y|\mapsto f(|y|,\rho)$ achieves its unique maximum, equal to $\rho$, at $|y|=\sqrt{1-\rho^2}$. 
\item There holds $B(y,\rho)\cap \mathbb S^{p-1}=B (y/|y|,\bar\rho)\cap \mathbb S^{p-1}$, with $\bar\rho=\bar\rho\left(|y|,\rho\right):=\sqrt{\frac{\rho^2-(1-|y|)^2}{|y|}}$. Moreover, we note that in the above range of $\rho$, for $|y|=\sqrt{1-\rho^2}$ we get $\bar\rho=\sqrt2\sqrt{1-\sqrt{1-\rho^2}}$, which is increasing in $\rho$.
\end{itemize}
Now for $N\ge p+1$ let $\omega_N=\{x_1,\ldots,x_N\}\subset \mathbb S^{p-1}$ be at optimizer configuration for $\eta_N(\mathbb S^{p-1})$, in particular
\begin{equation}\label{covernostar}
\bigcup_{j=1}^N B  (x_j,\eta_N(\mathbb S^{p-1}))\supset \mathbb S^{p-1},
\end{equation}
and define simlarly to the claim of the second bullet in the proposition
\begin{equation}\label{competcover}
\rho^\prime:=\eta_N^2(\mathbb S^{p-1})-\frac14\eta_N^4(\mathbb S^{p-1}) \quad\mbox{ and }\quad r^\prime:=\sqrt{1-(\rho^\prime)^2}.
\end{equation}
With this notation the balls $B(r^\prime x_j, \rho^\prime), 1\le j\le N$ cover $\mathbb S^{p-1}$. Indeed, $B(x_j,\eta_N(\mathbb S^{p-1}))\cap \mathbb S^{p-1}=B(r^\prime x_j, \rho^\prime)\cap \mathbb S^{p-1}$ and the claim follows from \eqref{covernostar}. This implies that $\rho^\prime\ge \eta_N^*(\mathbb S^{p-1})$ as well. Also note that 
\begin{equation}\label{optfbarro}f(r^\prime, \rho^\prime)=\rho^\prime\quad \mbox{and}\quad\bar\rho(r^\prime, \rho^\prime)=\bar\rho(\sqrt{1-(\rho^\prime)^2}, \rho^\prime)=\eta_N(\mathbb S^{p-1}).
\end{equation}
We next prove that $\rho^\prime=\eta_N^*(\mathbb S^{p-1})$, which directly implies \eqref{relation_etas}. Assume that this were not true, and that we had $\rho^\prime>\eta_N^*(\mathbb S^{p-1})$. Then there would exist $\widetilde\rho<\rho^\prime$ and a configuration $\{y_1,\ldots,y_N\}\subset \mathbb B^p\setminus\{0\}$ such that
\begin{equation}\label{coverabsurd}
\bigcup_{j=1}^N B (y_j,\widetilde\rho)\supset \mathbb S^{p-1}.
\end{equation}
Up to moving some of the points radially, we may suppose that $|y_j|$ are all equal to $\sqrt{1-\widetilde\rho^2}$ at which the function$f(\cdot,\widetilde\rho)$ achieves its maximum. Since $\rho\mapsto \bar\rho(\sqrt{1-\rho^2},\rho)$ is increasing, we find that
\begin{equation}\label{coverabsurd1}
\bar\rho(\sqrt{1-\widetilde \rho^2},\widetilde \rho)<\bar\rho(\sqrt{1-(\rho^\prime)^2},\rho^\prime)=\eta_N(\mathbb S^{p-1}),
\end{equation}
where we used \eqref{optfbarro}. But due to the geometric interpretation of $\bar\rho$ and to \eqref{coverabsurd}, we also have that
\[
\bigcup_{j=1}^NB\left(\frac{y_j}{|y_j|},\bar\rho\left(\sqrt{1-\widetilde\rho^2},\widetilde\rho\right)\right)\supset\mathbb S^{p-1},
\]
 which implies $\bar\rho(\sqrt{1-\widetilde\rho^2},\widetilde\rho)\ge\eta_N(\mathbb S^{p-1})$, which as desired contradicts \eqref{coverabsurd1}. Therefore we have $\rho^\prime=\eta_N^*(\mathbb S^{p-1})$, and the second bullet of the proposition follows.
\end{proof}
\subsection{Results for $\mathbb S^1\subset\mathbb R^2$}
For $\mathbb S^1$ it is easily seen that the minimal $N$-point covering optimal configurations constrained to $\mathbb S^1$ are given by the vertices of the inscribed regular $N$-gon. In the case of the minimal $N$-point \emph{unconstrained} covering we prove the following more precise version of Proposition \ref{prop:constr-unconstr_sphere}:
\begin{proposition}\label{prop:opt_cover_s1}
The configurations $\omega_N^*$ realizing the infimum in the definition of $\eta_N^*(\mathbb S^1)$ are, up to rotation, the following:
\begin{itemize}
\item For $N=1$, $\omega_1^*=\{0\}$
\item For $N=2$, $\omega_2^*=\{0,0\}$
\item For $N\ge 3$, $\omega_N^*$ consists of the midpoints of the sides of the regular $N$-gon inscribed in $\mathbb S^1$.
\end{itemize}
\end{proposition}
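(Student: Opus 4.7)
The plan is to translate the covering of $\mathbb S^1$ by $N$ closed balls into an arc-cover condition on the circle itself and then optimize. The case $N=1$ is immediate: for any $x_1 \in \mathbb R^2$ the reverse triangle inequality gives $\eta(\{x_1\}, \mathbb S^1) = \max_{z \in \mathbb S^1}|z - x_1| = 1 + |x_1|$, uniquely minimized at $x_1 = 0$ with value $1$.

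For $N \geq 2$, set $d_i := |x_i|$. The trace $B(x_i, r) \cap \mathbb S^1$ equals $\mathbb S^1$ when $d_i = 0$ and $r \geq 1$, is empty when $d_i > 0$ and $r < |d_i - 1|$, and otherwise is an arc centered at $x_i/|x_i|$ of half-length
\[
\beta_i(r, d_i) \;=\; \arccos\!\left(\frac{1 + d_i^2 - r^2}{2d_i}\right).
\]
A short convexity argument for $d \mapsto (1 + d^2 - r^2)/(2d)$ on $d > 0$ shows that, for each fixed $r \in (0,1)$, the function $d \mapsto \beta_i(r, d)$ is maximized at $d = \sqrt{1 - r^2}$ with value $\arccos\!\left(\sqrt{1 - r^2}\right)$. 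Since the arcs must cover $\mathbb S^1$, their total length satisfies $\sum_i 2\beta_i \geq 2\pi$; combining this with the pointwise bound yields
\[
N \arccos\!\left(\sqrt{1 - r^2}\right) \;\geq\; \sum_i \beta_i \;\geq\; \pi,
\]
which rearranges to $r \geq \sin(\pi/N)$, giving $\eta_N^*(\mathbb S^1) \geq \sin(\pi/N)$.

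For $N \geq 3$, $\sin(\pi/N) \in (0,1)$, and chasing equalities forces each $d_i = \cos(\pi/N) > 0$ (so in particular no point lies at the origin), each $\beta_i = \pi/N$, and $\sum_i 2\beta_i = 2\pi$. The last identity forces the $N$ arcs to tile $\mathbb S^1$ without interior overlap, which pins down the angular positions $\arg(x_i)$ to form an arithmetic progression with common difference $2\pi/N$, up to a global rotation. Together with $|x_i| = \cos(\pi/N)$, this identifies $\omega_N^*$ with a rotate of $\{\cos(\pi/N)\, e^{i(2k+1)\pi/N}\}_{k=0}^{N-1}$, i.e.\ the midpoints of the sides of the inscribed regular $N$-gon; an explicit computation confirms that these midpoints realize covering radius $\sin(\pi/N)$, yielding the matching upper bound. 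For $N = 2$ the lower bound reads $r \geq 1$, attained by $\{0, 0\}$ since $\overline{B(0, 1)} \supseteq \mathbb S^1$. The main delicate point is the equality analysis---especially ruling out degenerate $d_i = 0$ contributions---but this is handled directly by $r = \sin(\pi/N) < 1$ for $N \geq 3$.
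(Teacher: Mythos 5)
Your argument is correct and takes essentially the same approach as the paper: both reduce ball-covering of $\mathbb S^1$ to arc-covering, optimize the radial position of each center to maximize the arc trace (your optimizer $d_i=\sqrt{1-r^2}$ is exactly the paper's $\cos\rho$ with $\rho=\arcsin r$, since $\arccos\sqrt{1-r^2}=\arcsin r$), and then combine the total-length bound $\sum 2\beta_i\ge 2\pi$ with the pointwise cap $\beta_i\le\arcsin r$ to get $r\ge\sin(\pi/N)$, with equality forcing equally spaced arcs. One minor observation: neither your write-up nor the paper's proof actually establishes the claimed uniqueness for $N=2$, and in fact it fails as stated, since $\eta(\{0,x\},\mathbb S^1)=\sup_{z\in\mathbb S^1}\min(1,|z-x|)=1$ for every $x\in\mathbb R^2$, so $\{0,x\}$ is also an optimizer and is not a rotation of $\{0,0\}$.
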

\begin{proof}
The cases $N<2$ of the statement follows directly from Proposition \ref{prop:small_N} by using Proposition \ref{prop:bosuwan*}. Therefore we assume $N\ge 3$ for the rest of the proof.

\medskip

\noindent The midpoints of the sides of an inscribed regular $N$-gon are given by
\begin{equation}
\label{points_pj}
p_j:=(\cos(\pi/N)\cos(\theta +2\pi j/N), \cos(\pi/N)\sin(\theta+2\pi j/N))\in\mathbb R^2\ ,\quad\mbox{for}\ j\in\{0,\ldots,N-1\}\ ,
\end{equation}
where $\theta\in[0,2\pi/N)$ gives the orientation of our $N$-gon. A closed disk of radius $\sin(\pi/N)$ centered at $p_j$ covers the interval $I_j:=\{(\cos\phi,\sin\phi):\ \phi\in[\theta+(2j-1)\pi/N,\theta +(2j+1)\pi/N]\}$ inside $\mathbb S^1$, thus the union of all such disks covers the unit circle $\mathbb S^1$. Note that $\mathbb S^1\cap B(x,r)$ is always an arc of the form
\begin{equation}\label{interval}
I(\theta_0,\rho):=\{(\cos\phi, \sin\phi):\ \phi\in[\theta_0-\rho,\theta_0+\rho]\}\ .
\end{equation}
By direct computation of the local minimum, we find for $0\le \rho<\pi/2$
\begin{equation}\label{best_r}
\min\{r>0:\ \exists x\in\mathbb R^2,\ I(\theta_0,\rho)=\mathbb S^1\cap B(x,r)\}=\sin(\rho)\ ,
\end{equation}
and the unique $x$ realizing the above minimum is the point $(\cos(\rho)\cos(\theta_0), \cos(\rho)\sin(\theta_0))$. Further, we have that for fixed $N$ if $N$ arcs $I(\theta_j, \rho_j), j=1,\ldots,N$ cover $\mathbb S^1$ then
\[
\sum_{j=1}^N2\rho_j\ge 2\pi \quad\mbox{and}\quad\max_{1\le j\le N}\rho_j\ge \pi/N\ ,
\]
and thus
\begin{equation}
\label{bestcover_arcs}
\min\left\{\rho>0:\ \exists \theta_1,\rho_1,\ldots,\theta_N,\rho_N,\ \bigcup_{j=1}^N I(\theta_j,\rho_j)=\mathbb S^1,\ \max_{1\le j\le N}\rho_j\le \rho\right\}=\pi/N\ ,
\end{equation}
and the minimum is realized by a collection of equal intervals. Noting that for $\rho\le \pi/N, N\ge 3$ the function $\rho\mapsto\sin(\rho)$ is increasing, we find that as a consequence of \eqref{bestcover_arcs} and \eqref{best_r}, there holds
\[
\min\left\{r>0:\ \exists \omega_N=\{x_1,\ldots,x_N\},\ \bigcup_{j=1}^N B(x_i,r_j)\supset\mathbb S^1,\ \max_{1\le j\le N}r_j\le r\right\}=\sin(\pi/N)\ ,
\]
and the minimum is realized by the points $p_j$ from \eqref{points_pj}. This completes the proof of Proposition \ref{prop:opt_cover_s1}.
\end{proof}
For $N$-point constrained Riesz $s$-polarization on $\mathbb S^1$, it is proved in \cite{hks2013} that optimal configurations are again equally spaced points on $\mathbb S^1$, for each $0<s<\infty$.  The proof of Proposition \ref{prop:s1} below follows the strategy of \cite{hks2013}.  For related results, see also \cite{ambrus2009, 2013ambrusballerdelyi, 2013erdelyisaff} and \cite{Farkas2018}. For the unconstrained $s$-polarization we have not yet determined the precise optimizers $\omega_{N,s}^*$.  However, numerical evidence (see Figure \ref{fig:s1}) strongly suggests that for $N\ge 3$ the configurations form a regular $N$-gon  inscribed in a circle of radius $\bar r_{N,s}<1$,  where  
\begin{equation}\label{r_ns}
\bar r_{N,s}:=\arg\max_{r\in[0,1]}\sum_{j=1}^N\left(r^2+1-2\cos\frac{(2j+1)\pi}{2N}\right)^{-s/2}.
\end{equation}
\begin{figure}[!h]
\centering
\includegraphics[width=7.5cm]{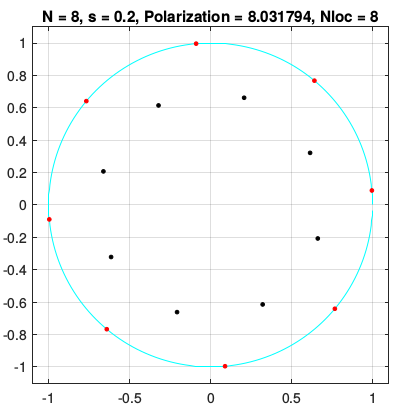}\includegraphics[width=7.5cm]{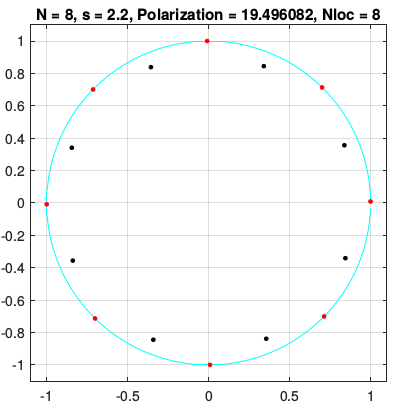}
\caption{Numerically computed optimizers $\omega_N^*$ (black points) for unconstrained Riesz $s$-polarization for $\mathbb S^1$ and points on $\mathbb S^1$ where the minimum of the  $s$-potential is attained (red points) for $N=8$ and  the values $s=0.2$ and $s=2.2$.}
\label{fig:s1}
\end{figure}

We remark that for fixed $N\ge 3$, Propositions~\ref{prop:bosuwan*} and \ref{prop:opt_cover_s1} imply that as $s\to \infty$   maximal $N$-point unconstrained polarization configurations $\omega_{N,s}^*$ (with one of the points fixed at 1) approach the midpoints of the sides of a regular $N$-gon in $\mathbb S^1$.

Under the extra assumption that the optimal configuration $\omega_{N,s}^*$ lies on a concentric circle with radius $r$ satisfying \eqref{eqn:concentric2}, we are able to establish the above conjecture, based on the following result, which is of independent interest and improves the main result of \cite{hks2013} by removing the convexity condition for $f$ on the interval $(0,\pi/N]$.
\begin{figure}[!h]
\centering
\includegraphics[width=7.cm]{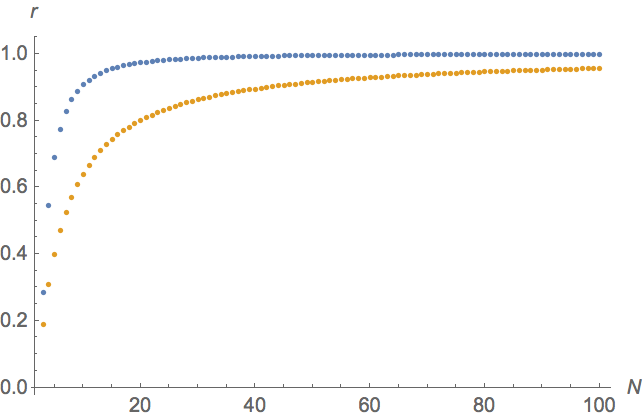}\hspace{.2in} \includegraphics[width=7.cm]{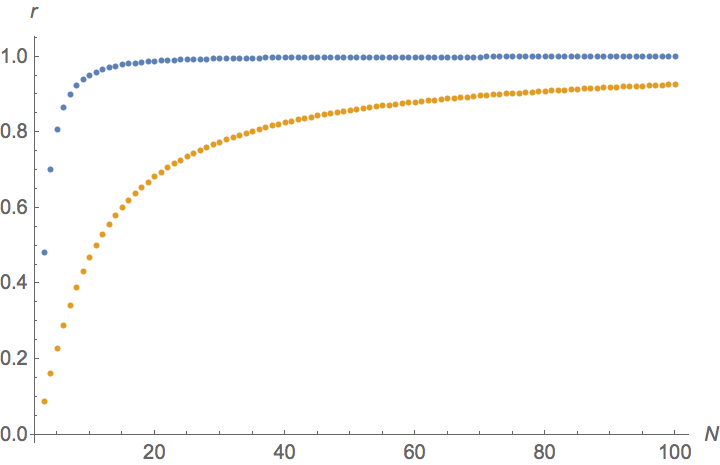}\\
\caption{Graphs of $\bar r_{N,s}$ from \eqref{r_ns} (blue dots) and $R_{N,s}^{-1}$  from \eqref{RNsdef} (orange dots) for $s=1$ (left) and $s=5$ (right) as $N$ ranges from $3$ to $100$. In both cases, $R_{N,s}^{-1}<\bar r_{N,s}$, therefore the range \eqref{eqn:concentric2} of $r$ in which Corollary \ref{coro:concentric} applies includes the expected radius $\bar r_{N,s}$ from \eqref{r_ns}. }
\label{fig:s1bis}
\end{figure}

\medskip

If we take $t\in[-\pi,\pi]$ to parametrize the counterclockwise signed angle between two points $x,y\in\mathbb S^1$, then the \emph{geodesic distance} between $x$ and $y$ is given by $\mathrm{dist}_{\mathbb S^1}(x,y):=\min\{|t|,|2\pi-t|\}$.

\begin{proposition}\label{prop:s1}
For $x,y\in\mathbb S^1$ let $\mathrm{dist}_{\mathbb S^1}(x,y)\in[0,\pi]$ be the geodesic distance (or smallest angle) between $x,y$, and set $K(x,y):=f(\mathrm{dist}_{\mathbb S^1}(x,y))$, for $f:[0,\pi]\to (-\infty, +\infty]$, and assume that the following hypotheses hold:
\begin{enumerate}[(i)]
\item[\rm (i)] the function $f$ is strictly decreasing on $(0,\pi]$ and strictly convex on $(\tfrac{\pi}{N},\pi]$;
\item[\rm (ii)] for the configuration $\omega_{N,\mathrm{eq}}\subset \mathbb S^1$ given by $x_k=e^{i\frac{2\pi k }{N}}$ for $k=1,\ldots,N$, the minimum value $P_K(\omega_{N,\mathrm{eq}})$ is achieved at the midpoints of the arcs between successive points $x_k,x_{k+1}$.
\end{enumerate}
Then any configuration $\omega_N^*\subset \mathbb S^1$ that satisfies $P_K(\mathbb S^1,\omega_N^*)=\mathcal P_K(\mathbb S^1,N)$ equals $\omega_{N,\mathrm{eq}}$, up to rotation.
\end{proposition}
\begin{proof}
We recall that the proof in \cite[Thm.\,1]{hks2013} consisted of starting from a general $N$-point configuration $x_1,\ldots,x_N\in\mathbb S^1$, initially ordered in counterclockwise manner, and applying a sequence of $N$ elementary moves to the points (see \cite[Lem.\,5]{hks2013}). The elementary moves are denoted $\tau_{\Delta_k^*}$, with $1\le k\le N$, $\Delta_k^*\in \R$. The move $\tau_{\Delta_k^*}$ leaves the positions of $x_1,\ldots,x_{k-1},x_{k+2},\ldots, x_N$ unchanged, and replaces the points $x_k$ and $x_{k+1}$ (with indices taken modulo $N$) by new points $x_k^\prime:= x_ke^{-i\Delta_k^*}$ and $x_{k+1}^\prime:=x_{k+1}e^{i \Delta_k^*}$, respectively.  A simple linear algebra argument shows (see \cite[Lem.\,5]{hks2013})   that there is a sequence of elementary moves such that:
\begin{enumerate}
\item[(a)] $\Delta_k^*\ge 0$ for $k=1,\ldots,N$,
\item[(b)] There exists $1\le j\le N$ such that $\Delta_j^*=0$,
\item[(c)]\label{item3} The composition $\tau_\mathbf{\Delta^*}:=\tau_{\Delta_N^*}\circ\cdots\circ\tau_{\Delta_1^*}$ sends $\omega_N$ to a rotation of the configuration $\omega_{N,\mathrm{eq}}$.
\end{enumerate}
We first assume   that none of the elementary moves change  the counterclockwise ordering of the points. 
Let $x^*$ denote the midpoint of the arc between $\tau_\mathbf{\Delta^*}(x_j),\tau_\mathbf{\Delta^*}(x_{j+1})$ for $j$ as in (b). By the above properties, we can prove by backwards induction on $k$ that
\begin{equation}\label{distance_xstar}
\min_{y\in\tau_{\Delta_k^*}\circ\cdots\circ\tau_{\Delta_1^*}(\omega_N)} \mathrm{dist}_{\mathbb S^1}(y,x^*)\ge \frac{\pi}{N}.
\end{equation}
Indeed, this is true for $k=N$ due to item (c) above; furthermore, if it is true for $k=n$ for some $2\le n\le N$ then due to items (a), (b) then it also holds for $k=n-1$.

\medskip

Next, as in \cite[Lem.\,4]{hks2013}, we prove that the potential generated by the points \emph{increases} on the arc $\gamma_{k,N}$ going from  $x_{k+1}'e^{i\pi/N}$ to $x_k'e^{-i\pi/N}$ in the counterclockwise direction, during the move $\tau_{\Delta_k^*}$. Towards this end, let $x\in \gamma_{k,N}$ and consider 
$\ell_k=\mathrm{dist}_{\mathbb S^1}(x_k,x)$, $\ell_k'=\mathrm{dist}_{\mathbb S^1}(x_k',x)$, $\ell_{k+1}=\mathrm{dist}_{\mathbb S^1}(x_{k+1},x)$,  and  $\ell_{k+1}'=\mathrm{dist}_{\mathbb S^1}(x_{k+1}',x)$.
Without loss of generality we may assume  $\ell_k\le \ell_{k+1}$, in which case
 we note that
 \begin{equation}\label{orderpts}
\ell_k'= \ell_k-\Delta_k^* \text{ and } \ell_{k+1}'\le \ell_{k+1}+\Delta_k^*.\end{equation}
Extending $f$ as a decreasing convex function on  $[\pi/N,\infty)$ and using $\ell_k\le \ell_{k+1}$, it follows that
 \begin{equation}\label{changepot_k}
\left[f(\ell_{k+1}')- f(\ell_{k+1}) \right]+ \left[f(\ell_{k}')-f(\ell_k)\right]\ge \left[f(\ell_{k+1}+\Delta_k^*)- f(\ell_{k+1}) \right]+ \left[f(\ell_{k}-\Delta_k^*)-f(\ell_k)\right]\ge 0.
\end{equation} 

\medskip

Due to \eqref{distance_xstar}, $x^*$ belongs to all the intervals $\gamma_{k,N}$ as above, for $k=1,\ldots,N, k\neq j$. As a consequence of the inequality \eqref{changepot_k}, during the sequence of moves as in the above steps (a),(b),(c) the value of the polarization potential at $x^*$ increases. Thus we have
\begin{equation}\label{int_f}
P_K(\omega_N)\le \sum_{x\in \omega_N}f(\mathrm{dist}_{\mathbb S^1}(x,x^*))\le\sum_{x\in \omega_{N,\mathrm{eq}}}f(\mathrm{dist}_{\mathbb S^1}(x,x^*))=P_K(\omega_{N,\mathrm{eq}}),
\end{equation}
where for the last equality we used hypothesis (ii). This shows that $\omega_{N,\mathrm{eq}}$ is an optimal configuration, as desired.

If not all of the elementary moves preserve the counterclockwise ordering, then we modify the above argument by considering compositions of moves $\tau_{t_k\mathbf \Delta^*}:=\tau_{t_k\Delta_N^*}\circ\cdots\circ\tau_{t_k\Delta_1^*}$, $k=1,\ldots, n$, 
for $t_k>0$ sufficiently small so that the ordering   is preserved (see \cite[Lem.\,6]{hks2013}) at each step and such that $\sum_{k=1}^n t_k=1$.  

\medskip

If $f$ is strictly convex on $[\pi/N,\pi]$, then the fact that in the middle inequality in \eqref{int_f} the equality holds, implies that during all the moves all the terms as in \eqref{int_f} are zero, which can only be true if $\Delta_k^*=0$ for all $k$, showing that $\omega_N=\omega_{N,\mathrm{eq}}$ up to rotation in this case.
\end{proof}
The following lemma gives two important cases in which the hypothesis (ii) from Proposition \ref{prop:s1} holds, the second of which is due to Nikolov and Rafailov \cite[Thm.\,1.2 (1)]{nr2011sum}.
\begin{lemma}\label{lem:nikorafa}
Let $K:\mathbb S^1\times \mathbb S^1\to (-\infty,+\infty]$ be given by $K(x,y)=f(\mathrm{dist}_{\mathbb S^1}(x,y))$ for a fixed function $f:[0,\pi]\to(-\infty,+\infty]$. Assume that we are in one of the following cases:
\begin{enumerate}[(i)]
\item[\rm (i)] the function $f$ satisfies the hypothesis {\rm (i)} of Proposition~\ref{prop:s1} and furthermore
\[\min_{\theta\in[0,\pi/N]}\left(f(\theta)+f(\tfrac{2\pi}{N}-\theta)\right)=2f(\tfrac{\pi}{N});\]
\item[\rm (ii)] there exist $R,s>0$ such that $f(t)=(R^2+1-2R\cos(t))^{-s/2}$.
\end{enumerate}
Then hypothesis {\rm (ii)} of Proposition~\ref{prop:s1} holds, namely
\begin{equation}\label{hyp_2}
\min_{y\in\mathbb S^1}\sum_{k=1}^NK\left(e^{i\frac{2\pi k}{N}},y\right)=\sum_{k=1}^NK\left(e^{i\frac{2\pi k}{N}},e^{i\frac{\pi}{N}}\right).
\end{equation}
\end{lemma}
\begin{proof}
The proof of the claim in the case   (ii)  is precisely \cite[Thm.\,1.2 (1)]{nr2011sum}, therefore we need to prove the claim in the case (i) only.

\medskip

Let $x_k=e^{i\frac{2\pi k}{N}}$ for $k=1,\ldots, N$. By symmetry, we consider the values of $\sum_{k=1}^Nf(\mathrm{dist}_{\mathbb S^1}(x_k,y))$ only for $y=e^{i\theta}$ with $\theta\in[0,\pi/N]$. We split $\omega_{N,\mathrm{eq}}$ into pairs of points $x_k, x_{N-k}$, for $k=1,\ldots,\lfloor N/2\rfloor$, to which we add, if $N=2n-1$ is odd, the potential $f(\mathrm{dist}_{\mathbb S^1}(x_n,y))=\min\{|\pi-\theta|, |\pi+\theta|\}$. The latter potential has a minimum at $\theta=0$ due to the decreasing nature of $f$. For the remaining pairs of points, we claim that the potential of each pair has a minimum at $\theta=0$ as well, and by superposition this will prove the claim.

\medskip

The points $x_k, x_{N-k}$ generate at $e^{i\theta}$ the joint potential equal to
\begin{equation}\label{pairpot_y}
f\left(\mathrm{dist}_{\mathbb S^1}(x_k, e^{i\theta})\right) +
f\left(\mathrm{dist}_{\mathbb S^1}(x_{N-k}, e^{i\theta})\right) = f\left(\frac{2k-1}{N}\pi-\theta\right)+f\left(\frac{2k+1}{N}\pi+\theta\right).
\end{equation}
For $k=1$ this is minimized at $\theta=0$ by the second hypothesis on $f$ from the statement of the proposition, whereas for $k>1$ we may use the convexity of $f$ to obtain that $(f(a)+f(b))/2\ge f((a+b)/2)$ for $a=(2k-1)\pi/N-\theta$ and $b=(2k+1)\pi/N+\theta$, in order to show again, using the symmetry of the configuration that the minimum is achieved at $\theta=0$, as desired.
\end{proof}
\begin{corollary}\label{coro:concentric}
Let  $s,r>0$, $N\in \mathbb N$, $A=\mathbb S^1$,  $B=r\mathbb S^1$, and define
\begin{equation} \label{xrsdef} x_{r,s}:=\frac{-(1+r^2)+\sqrt{(1+r^2)^2+ 4r^2s(2+s)}}{2 r s}
\end{equation}
and
\begin{equation}\label{RNsdef}
R_{N,s}:=\frac{1}{2} \sec ({\pi}/{N}) \left( s \sin
   ^2({\pi}/{N})+2+\sqrt{\sin
   ^2({\pi}/{N}) \left((s+2)^2-s^2 \cos
   ^2({\pi}/{N})\right)}\right).
\end{equation}
If $N, r$, and $s$ satisfy
\begin{equation}\label{eqn:concentric1} \cos({\pi}/{N})\le x_{r,s},\end{equation} or
\begin{equation}\label{eqn:concentric2} R_{N,s}^{-1}\le r\le R_{N,s}, \end{equation}
then any $\omega_N^*\subset B$ such that $P_s(A,\omega_N^*)=\mathcal P_s(A,B,N)$ equals, up to rotation, the regular $N$-gon inscribed in the circle $B$.

\end{corollary}
\begin{proof}
The function $f_s(\theta):=(1 + r^2 - 2 r \cos \theta)^{-s/2}$ is decreasing for $\theta\in [0,\pi]$.  Differentiating $f_s$ twice gives
$$f''_s(\theta)=-\frac{r s}{2}\  \frac{ 2 (1 + r^2) \cos \theta + r (-4 + 2s ( \cos^2 \theta-1))}{(1 + r^2 - 2 r \cos \theta)^{2 +s/
  2}}.
  $$
Letting  $ g(r,s,x):=2 (1 + r^2) x + r (-4 + 2s ( x^2-1))$ then   $f''_s(\theta)$ is positive on any interval where $g(r,s,\cos \theta)$ is negative.  Noting that $ g(r,s,x)$ is an increasing function of $x$ (with $r$ and $s$ fixed) for $x>0$ and  that $g(r,s,x_{r,s})=0$ shows that $g(r,s,x)\le 0$  if and only if $x\in [-1,x_{r,s}]$.  Hence, if  \eqref{eqn:concentric1} holds, then $f_s$ is convex on $[\pi/N,\pi]\subset [\arccos x_{r,s},\pi]$ and so  we may use Proposition~\ref{prop:s1} to prove  that  any $\omega_N^*\subset B$ such that $P_s(A,\omega_N^*)=\mathcal P_s(A,B,N)$ must consist of $N$ equally spaced points in the circle $B$.

To complete the proof we show that \eqref{eqn:concentric2} implies that \eqref{eqn:concentric1} holds.  Towards this end, let
$$
r_{x,s}^{\pm}:=\frac{s+2-s
   x^2\pm\sqrt{\left(1-x^2\right)
   \left((s+2)^2-s^2 x^2\right)}}{2 x},
$$
denote the solutions to $g(r,s,x)=0$ for fixed $x>0$ and $s$ and note that  $g(r,s,x)<0$ for $r_{x,s}^-<r<r_{x,s}^+$ and   $r_{x,s}^-r_{x,s}^+=1$.  Observe that $R_{N,s}=r_{\cos(\pi/N),s}^+$ and   $R_{N,s}^{-1}=r_{\cos(\pi/N),s}^-$.  Therefore, if $R_{N,s}^{-1}\le r\le  R_{N,s}$, we have
$g(r,s,\cos(\pi/N))\le 0$ and so it follows that $\cos(\pi/N)\le x_{r,s}$; i.e., that \eqref{eqn:concentric1} holds.
\end{proof}
Note that, due to the fact that $|x-y|^{-s}$ is symmetric, up to inverting the roles of $A,B$ we can restrict to the case $r\in[\bar r_{N,s},1]$, where $\bar r_{N,s}$ is as in \eqref{r_ns}. We found good numerical evidence (as shown in special cases in Figure \ref{fig:s1bis}) that for $s>0$ there exists $N_0(s)\in\mathbb N$ such that for all $N\ge N_0(s)$ there holds $R_{N,s}^{-1}<\bar r_{N,s}<1<R_{N,s}$, therefore the range \eqref{eqn:concentric2} of $r$ in which Corollary \ref{coro:concentric} applies includes the expected radius $\bar r_{N,s}$ from \eqref{r_ns}. We found numerically that $N_0(s)=2$ for $s\ge 0.7$.
%
%
%
%
\section{Proof of Theorem \ref{thm:bound_K}}\label{sec:proof_bound_K}
%
%
We first prove an auxiliary result, Lemma \ref{lem:no_concentr}, and then proceed to the proof of Theorem \ref{thm:bound_K}. This result in turn uses the result stated in Remark~\ref{tk_spheres}, a special case of Proposition~\ref{prop:kth_moment}. A result similar to Lemma~\ref{lem:no_concentr}, with a non-sharp version of bound \eqref{claim_main} below, and with an additional convexity requirement on the set $A$, appears in \cite[Thm.\,2.3]{2017reznikovsaffvolberg}. What allows us to obtain a stronger result are two ingredients: (a) the precise statement on homogeneous spaces of Proposition~\ref{prop:kth_moment} and, in particular, the study of the case of spheres described in Remark~\ref{tk_spheres}; and (b) the fact that we don't need to restrict to convex sets $A$ simplifies our constructions.
\medskip

Let $G$ be a locally compact topological group. We recall that a metric space $X$ is a \emph{homogeneous space with group $G$} if there exists a transitive $G$-action on $X$, i.e., for each $x,y\in X$ there exists $g\in G$ such that $g(x)=y$. In this case we may assume that there exists a subgroup $H\subset G$ such that $X=G/H$, endowed with the canonical multiplication action of $G$ (see \cite{helgason}). In this case $G$ acts on $X$ transitively. If $G$ is compact, then we denote by $\mathcal H_{X,G}$ the unique probability measure on $X$ that is invariant under each $g\in G$, which is the projection of the Haar measure of $G$.
\begin{proposition}\label{prop:kth_moment}
Let $G$ be a locally compact topological group and $X$ be a compact homogeneous space with group $G$ and let $K:X\times X\to(-\infty,+\infty]$ be a lower semicontinuous kernel that satisfies $K(g(x),g(y))=K(x,y)$ for every $x,y\in X$ and for every $g\in G$. Then the continuous single-plate polarization problem
\begin{equation}\label{polar_spin}
T_K(X):=\max_{\mu\in\mathcal M_1(X)}\min_{y\in X}\int K(x,y) d\mu(x)
\end{equation}
is realized by $\mathcal H_{X,G}$. Moreover, a probability measure $\mu\in\mathcal M_1(X)$ is an optimizer of \eqref{polar_spin} if and only if the $K$-potential of $\mu$ is constant on $X$, and we have
\begin{equation}\label{constpot}
\int K(x,y)d\mu(x)=\int K(x,y)d\mathcal H_{X,G}(x)=T_K(X) \quad\mbox{for all }y\in X.
\end{equation}
\end{proposition}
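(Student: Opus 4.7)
The plan is to exploit the $G$-invariance of $K$ together with the invariance and full topological support of $\mathcal H := \mathcal H_{X,G}$, reducing the problem to identifying a single constant $T$: first I would show that the $K$-potential of $\mathcal H$ is constant on $X$, and then use a Fubini exchange to prove both that $T$ is the optimal value and that any maximizer must share this constant-potential property. Writing $U^\mu(y) := \int K(x,y)\,d\mu(x)$, for any $g \in G$ the invariance of $K$ and $\mathcal H$ gives
\[
U^{\mathcal H}(g(y)) = \int K(x,g(y))\,d\mathcal H(x) = \int K(g^{-1}(x),y)\,d\mathcal H(x) = U^{\mathcal H}(y),
\]
so transitivity forces $U^{\mathcal H} \equiv T$ for some $T \in (-\infty,+\infty]$. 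The same argument with the two variables swapped makes $x \mapsto \int K(x,y)\,d\mathcal H(y)$ constant as well, and integrating either expression against $\mathcal H$ (with Fubini--Tonelli applied to $K + C$ for a constant $C$ making $K+C$ nonnegative, available because an lsc function on the compact product $X \times X$ is bounded below) identifies both constants with $T$.

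To conclude, for any $\mu \in \mathcal M_1(X)$ I would estimate
\[
\min_{y \in X} U^\mu(y) \le \int U^\mu(y)\,d\mathcal H(y) = \int\!\!\int K(x,y)\,d\mathcal H(y)\,d\mu(x) = T,
\]
which, combined with $\min_y U^{\mathcal H} = T$, gives $T_K(X) = T$ and shows that $\mathcal H$ is a maximizer. If $\mu$ achieves the supremum, forcing equality throughout the chain above yields $U^\mu \ge T$ pointwise together with $\int (U^\mu - T)\,d\mathcal H = 0$, so $U^\mu = T$ holds $\mathcal H$-a.e. By Fatou's lemma $U^\mu$ is lower semicontinuous, and $\mathcal H$ has full topological support on $X$ because every nonempty open $U \subset X = G/H$ pulls back to a nonempty open subset of $G$ of positive Haar measure; hence a nonnegative lsc function vanishing $\mathcal H$-a.e.\ on $X$ must vanish everywhere, giving $U^\mu \equiv T$. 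Conversely, if $U^\mu$ is identically equal to some constant $c$, integrating against $\mathcal H$ and invoking the first step forces $c = T$, so $\min_y U^\mu = T = T_K(X)$ and $\mu$ is optimal.

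The main delicate step I anticipate is the last upgrade from ``$U^\mu = T$ $\mathcal H$-a.e.'' to ``$U^\mu \equiv T$,'' which genuinely requires both the lower semicontinuity of $K$ and the full topological support of $\mathcal H$ deduced from transitivity of the $G$-action. A minor case worth handling separately is $T = +\infty$: then any maximizer must satisfy $U^\mu \equiv +\infty$, which is still a (trivially) constant potential, and the rest of the argument goes through unchanged by working with the nonnegative kernel $K + C$ from the outset.
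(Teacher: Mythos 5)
Your proof is correct and follows essentially the same route as the paper: establish that the $\mathcal H_{X,G}$-potential is constant by $G$-invariance and transitivity, then use a Fubini exchange $\min_y U^\mu(y)\le\int U^\mu\,d\mathcal H_{X,G}=\int U^{\mathcal H_{X,G}}\,d\mu=T$ to identify the optimum and characterize maximizers. In fact you are more careful than the paper at the last step: the paper simply asserts that if $U^\mu$ is nonconstant then the inequality in its display is strict, whereas this really requires passing from ``$U^\mu=T$ $\mathcal H_{X,G}$-a.e.'' to ``$U^\mu\equiv T$,'' which you correctly justify via lower semicontinuity of the potential (Fatou) together with the full topological support of $\mathcal H_{X,G}$. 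Your treatment of the Fubini step (replacing $K$ by $K+C\ge 0$ using that an lsc function on a compact space is bounded below) and of the degenerate case $T=+\infty$ are also sound.
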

\noindent As emphasized in Remark \ref{tk_spheres}, polarization-optimizing measures need not be unique.

\medskip

\noindent For use in the following proof, we introduce the notation $f_\#\mu\in\mathcal M(Y)$ to denote the \emph{pushforward} of a Radon measure $\mu\in\mathcal M(X)$ by the measurable function $f:X\to Y$, and is defined by requiring that, for every test function $g\in C^0(Y)$, there holds
\[
\int g(y)df_\#\mu(y)=\int g(f(x)) d\mu(x).
\]
\begin{proof}
Using the fact that $\mathcal H_{X,G}$ and $K$ are $G$-invariant and $G$ acts transitively on $X$, we find that for any $x,x_0\in X$, there  exists $g_{x,x_0}\in G$ such that $g_{x,x_0}(x)=x_0$, $(g_{x,x_0})_\# \mathcal H_{X,G}=\mathcal H_{X,G}$ and for any $x^\prime\in X$, there holds $K(x,x^\prime)=K(x_0,g_{x,x_0}(x^\prime))$. This allows us to write
\begin{eqnarray}\label{symm}
\int K(x,x^\prime)d\mathcal H_{X,G}(x^\prime)&=&\int K(x_0,g_{x,x_0}(x^\prime))d\mathcal H_{X,G}(x^\prime)\nonumber\\
&=&\int K(x_0,x^\prime)d\left((g_{x,x_0})_\#\mathcal H_{X,G}\right)(x^\prime)= \int K(x_0,x^\prime)d\mathcal H_{X,G}(x^\prime).
\end{eqnarray}
Using \eqref{symm} and the fact that $\mathcal H_{X,G}$ and $\mu$ are probability measures, we may compare the minima of the potentials generated by $\mu$ and $\mathcal H_{X,G}$ as follows:
\begin{eqnarray}
\min_{y\in X}\int K(x,y) d\mu(x) &\le& \int \int K(x,x^\prime)d\mu(x)d\mathcal H_{X,G}(x^\prime)=\int \int K(x,x^\prime)d\mathcal H_{X,G}(x^\prime)d\mu(x)\label{min}\\
&\stackrel{\text{\eqref{symm}}}{=}&\int K(x_0,x^\prime)d\mathcal H_{X,G}(x^\prime)=\min_{y\in X}\int K(y,x^\prime)d\mathcal H_{X,G}(x^\prime)\nonumber .
\end{eqnarray}
This shows that $\mathcal H_{X,G}$ realizes the maximum in \eqref{polar_spin}, and thus  \eqref{constpot} holds. If the minimum in \eqref{min} is not achieved at all points $y\in X$, then a strict inequality holds in \eqref{min} implying that $\mu$ is not a maximizer.
\end{proof}
\begin{rmk}\label{tk_spheres}
We note, as a special case of the above, that we could take $K(x,y):=\langle x,y\rangle^k$ with $k\in\mathbb N$ an even integer, $X=\mathbb S^{p-1}$ and $G=O(p)$, where $O(p):=\{M\in \mathbb R^{p\times p}:\ M^t=M^{-1}\}$ is the group of orthogonal matrices, acting on $X$   by $M(x):= M\cdot x$. In this case the optimal $K$-polarization can be explicitly computed. Denoting by $\bar \sigma$ the uniform measure on $\mathbb S^{p-1}$, we have
\begin{equation}\label{eqtk_spheres}
T_{\langle\cdot,\cdot\rangle^k}(\mathbb S^{p-1})=\int_{\mathbb S^{p-1}}\langle x_0,x^\prime\rangle^kd\bar\sigma(x^\prime)=\frac{|\mathbb S^{p-2}|}{|\mathbb S^{p-1}|}B\left(\frac{k+1}{2},\frac{p-1}{2}\right)=\frac{1}{\sqrt{\pi}}\ \frac{\Gamma\left(\frac{p}2\right)\Gamma\left(\frac{k+1}{2}\right)}{\Gamma\left(\frac{p+k}{2}\right)}\ ,
\end{equation}
where $B(\cdot,\cdot)$ denotes the Beta function and $\left|\mathbb S^d\right|$ is the surface area of $\mathbb S^d$.

\medskip

As a special case which will be used in the proof of the next lemma, we note that for $k=2$ the above expression gives $T_{\langle\cdot,\cdot\rangle^2}(\mathbb S^{p-1})=1/p$, and this value is also achieved as the continuous single-plate polarization of the measure $\mu_p:=(p+1)^{-1}\sum_{i=0}^p\delta_{v_i}$, where $\omega_{\triangle_p}:=\{v_0,\ldots, v_p\}$ is the (multi)set of vertices of a regular simplex inscribed in $\mathbb S^{p-1}$. This fact is a consequence of the property that $\omega_{\triangle_p}$ is a spherical $2$-design, see \cite{dgs}.
\end{rmk}
\begin{lemma}\label{lem:no_concentr}
Let $p\ge 2$ and $A\subset\mathbb R^p$ be a compact set. Then for each $s>p-2$, there exists a constant $0<c_{s,p}<1/2$ depending only on $p$ and $s$ such that if $N$ is an integer such that $\mathcal P_s^*(A,N)<+\infty$, for any $\mathcal P_s^*(A,N)$-optimizing multiset $\omega_N^*$ and any $\hat x\in\R^p\setminus A$ there holds
\begin{equation}\label{claim_main}\#\left[\omega_N^*\cap B\left(\hat x, c_{s,p}\op{dist}(\hat x, A)\right)\right]\le p\ .
\end{equation}
\end{lemma}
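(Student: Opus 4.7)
The plan is to proceed by contradiction. Assume that $\omega_N^*$ contains $p+1$ points $x_1,\ldots,x_{p+1}$ inside $B(\hat x,r_0)$, where $r_0:=c_{p,s}\,d$ and $d:=\op{dist}(\hat x,A)$ with $c_{p,s}$ to be chosen later. I will build a modified configuration $\tilde\omega_N$ by keeping the remaining $N-(p+1)$ points of $\omega_N^*$ untouched and replacing the cluster by the vertices $\tilde x_0,\ldots,\tilde x_p$ of a regular simplex inscribed in $\partial B(\bar x,r')$, where $\bar x:=\frac{1}{p+1}\sum_i x_i$ is the cluster centroid and $r':=\kappa\,r_0$ with $\kappa=\kappa(p,s)>0$ to be chosen. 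Writing $v_i:=x_i-\bar x$, $M:=\sum_i v_iv_i^T$, $S:=\op{tr}M=\sum_i|v_i|^2$, the triangle inequality gives $|v_i|\le 2r_0$, whence $S\le 4(p+1)r_0^2$; and for every $y\in A$, $\rho(y):=|\bar x-y|\ge(1-c_{p,s})d$, so $r_0/\rho(y)$ is uniformly small in $c_{p,s}$.

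The heart of the argument is a Taylor expansion of both $\sum_i|x_i-y|^{-s}$ and $\sum_j|\tilde x_j-y|^{-s}$ about $\bar x$ in powers of $r_0/\rho$. The first-order terms cancel because $\sum_i v_i=0=\sum_j(\tilde x_j-\bar x)$, while the \emph{key input} from Remark~\ref{tk_spheres} is that the regular simplex on $\mathbb S^{p-1}$ is a spherical $2$-design: after scaling, this gives the \emph{isotropic} identity $\sum_{j=0}^p(\tilde x_j-\bar x)(\tilde x_j-\bar x)^T=\tfrac{(p+1)(r')^2}{p}I$, which converts the value $T_{\langle\cdot,\cdot\rangle^2}(\mathbb S^{p-1})=1/p$ into the sharp constant $p$ that appears in \eqref{claim_main}. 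Letting $u(y):=(\bar x-y)/\rho(y)$, a direct computation yields
\begin{equation*}
\sum_{j=0}^p|\tilde x_j-y|^{-s}-\sum_{i=1}^{p+1}|x_i-y|^{-s}=\frac{s\,\rho^{-s-2}}{2}\left[\frac{(s+2-p)(p+1)(r')^2}{p}+S-(s+2)\,u^TMu\right]+\mathcal E(y),
\end{equation*}
with $|\mathcal E(y)|\le C(p,s,\kappa)\,r_0^3\,\rho^{-s-3}$. The coefficient $s+2-p$ is strictly positive by hypothesis; this is the only place where $s>p-2$ is used, and it reflects the strict subharmonicity of $|x-y|^{-s}$ in $x$.

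To finish, I would use the elementary bound $u^TMu=\sum_i(u\cdot v_i)^2\le S\le 4(p+1)r_0^2$ to estimate the bracket from below by $(p+1)r_0^2\left[(s+2-p)\kappa^2/p-4(s+1)\right]$; choosing for instance $\kappa^2:=8p(s+1)/(s+2-p)$ makes this at least $4(s+1)(p+1)r_0^2$, so the positive part of the difference is at least $2s(s+1)(p+1)r_0^2\rho^{-s-2}$. Since $\rho/r_0\ge(1-c_{p,s})/c_{p,s}$, one can then take $c_{p,s}>0$ small enough (depending only on $p$ and $s$, and also satisfying $c_{p,s}(1+\kappa)<1$ so that $\tilde x_j\notin A$) that the positive part strictly dominates $|\mathcal E(y)|$ uniformly in $y\in A$. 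This gives $P_s(A,\tilde\omega_N)>P_s(A,\omega_N^*)=\mathcal P_s^*(A,N)$, the desired contradiction. \emph{Main obstacle:} the bound $u^TMu\le S$ is saturated precisely when $M$ is nearly rank-one and aligned with $\bar x-y$; in that worst-case direction the original cluster actually \emph{outperforms} any simplex of the same second moment $S$, which is why one must inflate the simplex to a radius $r'$ comparable to $r_0$ rather than to $\sqrt{S/(p+1)}$, balancing the resulting $\kappa$-dependent error in $\mathcal E$ against the $\kappa^2$-gain produced by the positive subharmonic coefficient $s+2-p$.
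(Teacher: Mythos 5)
Your proposal is correct and follows the same strategy as the paper's proof: assume a cluster of $p+1$ points near $\hat x$, replace it by a regular simplex centered at the cluster centroid $\bar x$, and use a Taylor expansion around $\bar x$ together with the spherical $2$-design identity $\sum_j w_jw_j^T=\tfrac{p+1}{p}I$ for the simplex vertices $w_j\in\mathbb S^{p-1}$ (which is exactly where $s>p-2$ enters, as $(s+2)/p>1$) to show that this replacement strictly increases the polarization, contradicting optimality. The only differences are cosmetic bookkeeping: you expand both configurations to full second order and control the cluster's quadratic contribution via $u^TMu\le S\le 4(p+1)r_0^2$, whereas the paper handles the cluster with a first-order expansion plus a quadratic remainder bound and chooses the simplex radius independently of the cluster radius; the underlying mechanism and the role of $T_{\langle\cdot,\cdot\rangle^2}(\mathbb S^{p-1})=1/p$ are identical.
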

\begin{figure}[!h]
\centering
\includegraphics[width=15cm]{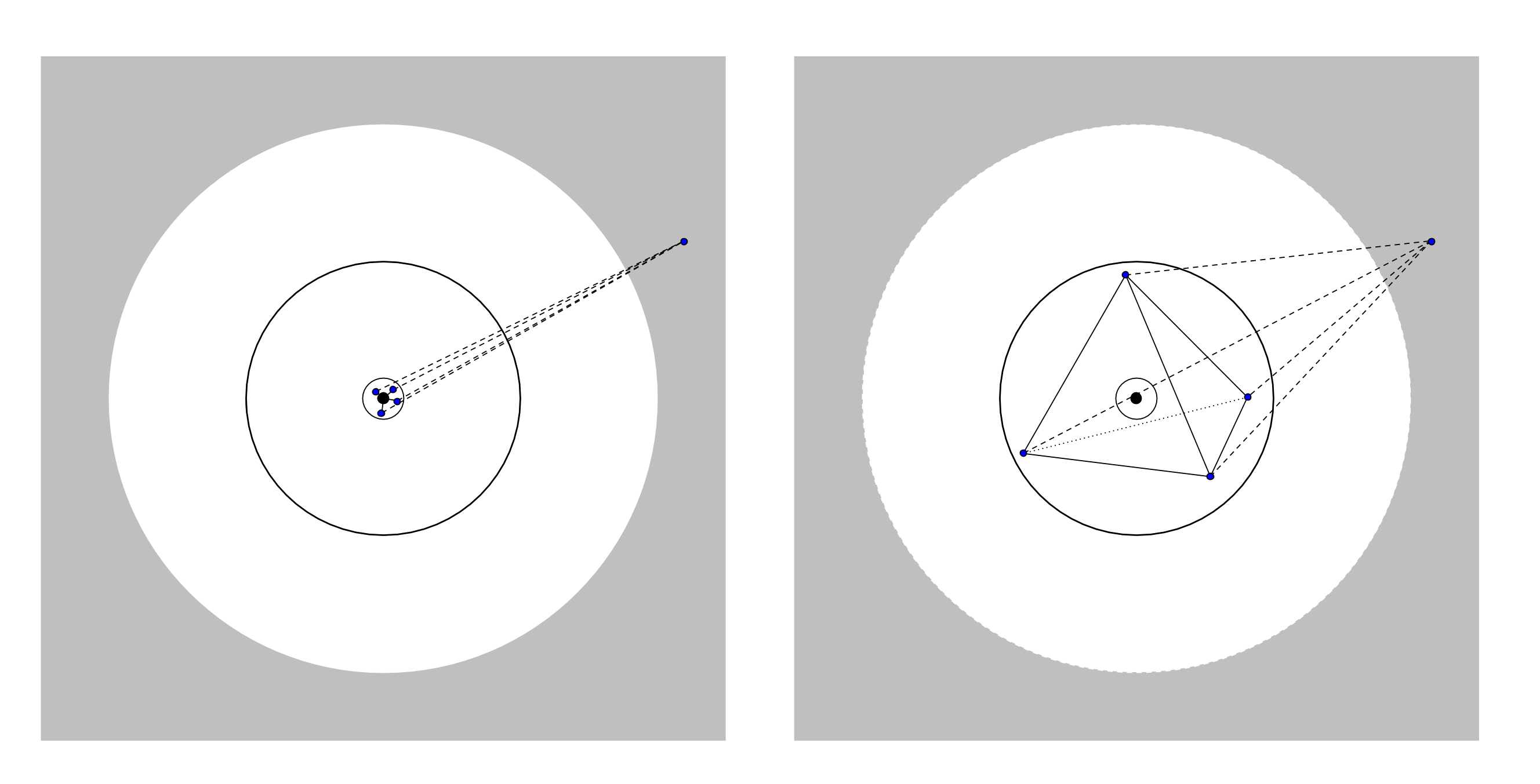}
\caption{Idea for the proof of Lemma~\ref{lem:no_concentr}, for $p=3$: (left) the four points in the small sphere, create, in the grey area, a potential close to $4$ times the one of a charge in the center; (right) after moving the points out to form a regular simplex, the potential in the grey area is then increased, as can be shown by using the Taylor expansion of the potential, using the assumption that $s>p-2$.}
\label{fig:no_concentr}
\end{figure}
\begin{rmk} Regarding the sharpness of the lemma, Propositions \ref{prop:small_N} and \ref{prop:slep-2} show that the bound in \eqref{claim_main} cannot be replaced by $p-1$ when $A=\mathbb S^{p-1}$.
\end{rmk}
\begin{proof}[Proof of Lemma \ref{lem:no_concentr}:]
\noindent\textbf{Step 1.} To simplify notation, we write
\begin{equation}\label{defr}
r:=\op{dist}(\hat x, A)>0.
\end{equation}
For $c_1\in(0,1/2)$, assume that $\omega_N^*$ contains $p+1$ points inside $B(\hat x, c_1r)$, say
\begin{equation}\label{contradiction}
x_0,\ldots,x_p\in B(\hat x, c_1r)\ .
\end{equation}
Our goal is to prove that there exists a constant $c_{s,p}\le 1/2$ such that $c_1<c_{s,p}$ gives a contradiction to the minimality of $\omega_N^*$.

\medskip

\noindent\textbf{Step 2.} For $c_2>0$ consider the new configuration
\begin{equation}\label{define_new_om}
\widetilde{\omega}_N:= \omega_N^*\setminus\{x_0,\ldots,x_p\}\bigcup\left(\bar x+c_2 r \omega_{\triangle_p}\right),\quad\mbox{where}\quad \bar x:=\frac{1}{p+1}\sum_{i=0}^px_j,
\end{equation}
and $\omega_{\triangle_p}$ is as in Remark \ref{tk_spheres}, the set of vertices of a regular simplex inscribed in $\mathbb S^{p-1}$.

\medskip

\noindent For $y\in \R^p$, define
\[
f_y(x):=\frac{1}{\verti{y-x}^s}.
\]
We will consider the following Taylor expansions of $f_y$ around $\bar x$ under the condition that $|x|\le \frac14|\bar x- y|$:
\begin{eqnarray}
f_y(\bar x+x)&=&f_y(\bar x) - s \frac{\langle x, \bar x - y\rangle}{\verti{\bar x-y}^{s+2}} + \frac{\mathcal R_2(x)}{\verti{\bar x-y}^{s+2}}\label{taylor_fy_2}\\
&=&f_y(\bar x) - s \frac{\langle x, \bar x - y\rangle}{\verti{\bar x-y}^{s+2}} + \frac{s}2\frac{(s+2)\langle x, \bar x -y\rangle^2 - \verti{x}^2\verti{\bar x-y}^2}{\verti{\bar x -y}^{s+4}} + \frac{\mathcal R_3(x)}{\verti{\bar x-y}^{s+3}},\label{taylor_fy}
\end{eqnarray}
where for some constants $\gamma_{s,p}>0$ depending only on $p,s$ we have
\begin{equation}\label{remainder_bounds}
\verti{\mathcal R_2(x)}\le \gamma_{s,p}|x|^2 \quad\mbox{and}\quad\verti{\mathcal R_3(x)}\le \gamma_{s,p}|x|^3.
\end{equation}
\textbf{Step 3.} As discussed in Remark \ref{tk_spheres}, for $k=2$ we have $T_{\langle\cdot,\cdot\rangle^2}(\mathbb S^{p-1})=1/p$, which is attained by the regular simplex $\omega_{\triangle_p}$. Therefore the condition $s>p-2$ can be rewritten as $(s+2)T_{\langle\cdot,\cdot\rangle^2}(\mathbb S^{p-1})>1$.  Thus there exists a positive number $\varepsilon_{s,p}>0$ depending only on $s,p$ such that if $v_i$ denote, as in Remark \ref{tk_spheres}, the vertices of a regular simplex inscribed in $\mathbb S^{p-1}$, then
\begin{equation}\label{finite_pk}
\frac{s+2}{p+1}\sum_{i=0}^p\langle v_i,y\rangle^2- 1= (s+2)T_{\langle\cdot,\cdot\rangle^2}(\mathbb S^{p-1})-1>2\varepsilon_{s,p}, \quad\mbox{for all}\quad y\in\mathbb S^{p-1}\ .
\end{equation}
Now note that for any $v\in\R^p$ there holds
\begin{equation}\label{bound2}
\sum_{i=0}^p v_i = 0, \quad \frac{s+2}{p+1}\sum_{i=0}^p \langle c_2rv_i, v\rangle^2 \ge (1+2\varepsilon_{s,p}) c_2^2r^2\verti{v}^2, \quad \frac{1}{p+1}\sum_{i=0}^p\verti{c_2rv_i}^2 = c_2^2 r^2 \ ,
\end{equation}
where for the middle inequality we used \eqref{finite_pk}. From the assumption \eqref{contradiction}, since $c_1<c_{s,p}\le 1/2$ and $\bar x\in\mathrm{conv}\{x_0,\ldots,x_p\}\subset B(\hat x,c_1r)$, we obtain
\begin{equation}\label{assum_far}
\min_{y\in A}|\bar x -y|\ge (1-c_1)r\ge \frac{r}{2}\quad\mbox{and}\quad \max_{0\le j\le p}|x_j-\bar x|\le2c_1r.
\end{equation}
Conditions \eqref{assum_far} and the fact that $\omega_{\triangle_p}\subset\mathbb S^{p-1}$ allow to obtain that for $c_1\le 1/16$ and $c_2\le 1/4$ the conditions $|x|\le \frac14|\bar x- y|$ required for \eqref{taylor_fy} and \eqref{remainder_bounds} to hold are satisfied for $x=x_j-\bar x$ and for $x\in c_2 r \omega_{\triangle_p}$. 

\medskip
\noindent We now sum \eqref{taylor_fy} over $x\in c_2 r \omega_{\triangle_p}$. Using \eqref{remainder_bounds}, \eqref{bound2} and the first bound in \eqref{assum_far}, we can then estimate
\begin{eqnarray}\label{bound_newconf}
\frac1{p+1}\sum_{i=0}^pf_y(\bar x+c_2rv_i)&\ge&  f_y(\bar x)+ c_2^2 r^2\frac{s\ \varepsilon_{s,p}}{\verti{\bar x -y}^{s+2}} - c_2^3\gamma_{s,p}\frac{r^3}{\verti{\bar x - y}^{s+3}}\nonumber\\
&\ge&  f_y(\bar x)+c_2^2\left( s\ \varepsilon_{s,p} - 2c_2\gamma_{s,p}\right)\frac{r^2}{\verti{\bar x -y}^{s+2}}\ .
\end{eqnarray}
\textbf{Step 4.} By writing the expansion \eqref{taylor_fy_2} at $x=x_j-\bar x$, for $j\in\{0,\ldots,p\}$ we find
\[
f_y(x_j)=f_y(\bar x) - s\frac{\langle x_j-\bar x, \bar x -y\rangle}{\verti{\bar x - y}^{s+2}}+\frac{\mathcal R_2(x_j-\bar x)}{\verti{\bar x - y}^{s+2}}.
\]
We now sum the above equation over $j=0,\ldots,p$, and divide by $p+1$, and get
\begin{eqnarray}\label{bound_oldconf_1}
\frac{1}{p+1}\sum_{j=0}^p f_y(x_j)
&=& f_y(\bar x) - \frac{s}{p+1}\frac{1}{\verti{\bar x - y}^{s+2}}\left\langle \sum_{j=1}^p (x_j-\bar x), \bar x -y\right\rangle + \frac{1}{p+1}\frac{1}{\verti{\bar x - y}^{s+2}}\sum_{j=0}^p\mathcal R_2(x_j-\bar x)\nonumber\\
&=&f_y(\bar x) + \frac{1}{p+1}\frac{1}{\verti{\bar x - y}^{s+2}}\sum_{j=0}^p\mathcal R_2(x_j-\bar x)\nonumber\\
&\le&f_y(\bar x)+4c_1^2\,\gamma_{s,p}\,\frac{r^2}{\verti{\bar x-y}^{s+2}}\ ,
\end{eqnarray}
where to obtain the second line we note that the first term on the right in the first line vanishes due to the definition of $\bar x$ from \eqref{define_new_om}, and for obtaining the inequality in the last line we use the first bound in \eqref{remainder_bounds} together with the second bound from \eqref{assum_far}.

\medskip

\noindent 
\textbf{Step 5.} Now, using \eqref{define_new_om}, we find that the bounds \eqref{bound_newconf} and \eqref{bound_oldconf_1} give
\begin{subequations}\label{expressi}
\begin{eqnarray}
\sum_{x\in\widetilde{\omega}_N}\frac{1}{\verti{x-y}^s}- \sum_{x\in\omega_N^*}\frac{1}{\verti{x-y}^s}&=&\sum_{i=0}^pf_y(\bar x+c_2rv_i)- \sum_{j=0}^pf_y(x_j)\nonumber\\
&=& \sum_{i=0}^p\left[f_y(\bar x+c_2rv_i)- f_y(\bar x)\right]- \sum_{j=0}^p\left[f_y(x_j)  -f_y(\bar x)\right]\nonumber\\
&\ge& (p+1)\left[c_2^2(s\ \varepsilon_{s,p} - 2c_2\gamma_{s,p}) - 4c_1^2\gamma_{s,p}\right]\frac{r^2}{\verti{\bar x-y}^{s+2}}\label{compare_taylor}\\
&:=&E_{s,p}(c_1,c_2)\frac{r^2}{\verti{\bar x-y}^{s+2}}\label{expressioo}\ ,
\end{eqnarray}
\end{subequations}
for any $y\in A$. As a function of $c_2$, the value of $c_2^2(s\ \varepsilon_{s,p} - 2c_2\gamma_{s,p})$ in \eqref{compare_taylor} is positive and increasing for $c_2\in (0, s\varepsilon_{s,p}/(3\gamma_{s,p})]$, and we will take $c_2=\bar c_2:=\min\{1/4,\  s^3\varepsilon_{s,p}^3/(27\gamma_{s,p}^2)\}>0$. By comparing this value with the term $4c_1^2\gamma_{s,p}$ from \eqref{compare_taylor}, we find that if $c_1$ satisfies
\begin{equation}\label{prop_c1}
c_1<\min\left\{\sqrt{\bar c_2^2(s\ \varepsilon_{s,p} - 2\bar c_2\gamma_{s,p})}\,,\,\frac{1}{16}\right\}:=c_{s,p},
\end{equation}
then the expression defined in \eqref{expressioo} satisfies $E_{s,p}\left(c_1,\bar c_2\right)>0$. If $\widetilde{y}\in A$ achieves the minimum of $P_s(\widetilde{\omega}_N,y)$ in \eqref{compare_taylor} and $c_1<c_{s,p}$, then from \eqref{expressi} we obtain
\begin{equation}\label{final_noconc_contrad}
P_s(\widetilde{\omega}_N,A)=\sum_{x\in\widetilde{\omega}_N}\frac{1}{\verti{x-\widetilde{y}}^s}\ge E_{s,p}\left(c_1,\bar c_2\right)\frac{r^2}{\verti{\bar x-\widetilde{y}}^{s+2}} + \sum_{x\in\omega_N^*}\frac{1}{\verti{x-\widetilde{y}}^s}> P_s(\omega_N^*,A),
\end{equation}
which contradicts the optimality of $\omega_N^*$. Therefore the value $c_{s,p}$ defined in \eqref{prop_c1} is as required in Step 1, and this concludes the proof of the lemma.
\end{proof}
\begin{proof}[Completion of Proof of Theorem \ref{thm:bound_K}:]
We first note that as a consequence of Proposition \ref{prop:covex_equal}, for each $N$, optimal configurations $\omega_N^*$ are contained in the convex hull $\op{conv}(A)$, which has diameter equal to $\op{diam}(A)<\infty$. We then note that, for  $c_{s,p}$ chosen according to Lemma \ref{lem:no_concentr},
\begin{equation}\label{coverepsilon}
\op{conv}(A)\setminus A_\epsilon\subset\bigcup_{x\in \op{conv}(A)\setminus A_\epsilon} B\left(x,\epsilon c_{s,p}\right)\subset \left(\op{conv}(A)\right)_{\epsilon c_{s,p}}.
\end{equation}
We then apply Besicovitch's covering lemma and find a finite subcover of $\op{conv}(A)\setminus A_\epsilon$ by at most $N_{\mathrm{Bes},p}$ families of disjoint balls. Note that, in particular, we have for all $x\notin A_\epsilon$ that $B(x,\epsilon c_{s,p})\subset B(x,\op{dist}(x, A))$. We may thus apply the bound \eqref{claim_main} to each one of the above $N_{\mathrm{Bes},p}$ families, and then sum the bounds. Thus we find that via a direct volume bound
\[
\#\left(\omega_N^*\setminus A_\epsilon\right)\le p\, N_{\mathrm{Bes},p} \frac{\mathcal L_p\left(\left(\op{conv}(A)\right)_{\epsilon c_{s,p}}\right)}{\mathcal L_p(B(0,\epsilon c_{s,p}))},
\]
which concludes the proof of the theorem.
\end{proof}
%

%
%
\section{Weak   separation and proof of Theorem \ref{thm:compact_body_old}}\label{sec:pf_compact_body_old}
%
%
In this section we first present Proposition \ref{prop:pointsep} on the weakly well-separated property of maximal unconstrained polarization configurations and its consequences in Proposition \ref{prop:max_away_points}. Then we prove the general point replacement result of Proposition \ref{prop:replace_points}. Finally, Propositions \ref{prop:max_away_points} and \ref{prop:replace_points} together with Theorem \ref{thm:bound_K} allow us to prove Theorem \ref{thm:compact_body_old} in Section \ref{ssec:proof_th_cbo}.
\subsection{Weakly well-separated families of configurations}
The results on the asymptotics of $\mathcal P_s^*(A,N)$ presented in this section are set in a framework similar to the one for $ \mathcal P_s(A,N)$ from \cite{bhrs2016preprint}. 

\medskip

Recall the following definition from \cite{2017reznikovsaffvolberg} and \cite{2017hrsv}:
\begin{definition}\label{def:weaksep}
Let $0<d\le p$ be integers. A family $\Omega$ of multisets $\omega \subset \R^p$   is called \emph{weakly well-separated for dimension $d$ and parameter $\eta>0$} if there exists a number $M>0$ such that for each $\omega\in\Omega$ and each $x\in\R^p$, there holds
\begin{equation}\label{weakwellsep}
\#\left(\omega\cap B(x,\eta\cdot(\#\omega)^{-1/d})\right)\le M\ ,
\end{equation}
where $B(x,r)$ denotes the $p$-dimensional open ball with center $x$ and radius $r$.
\end{definition}

\begin{proposition}\label{prop:pointsep}
Under the same conditions as in Theorem~\ref{thm:compact_body_old}, 
  there exists a constant $\eta>0$ depending on $s$, $d$ and $A$ such that the family of all optimal configurations
\begin{equation}\label{omegas}
\Omega_s:=\left\{\omega\subset \mathbb R^p:\ P_s(A,\omega)=\mathcal P_s^*(A,\#\omega)\right\}
\end{equation}
is weakly well-separated for dimension $d$ and parameter $\eta$ with $M=p$.
\end{proposition}
\begin{rmk}Note that the value $M=p$ in the above proposition is optimal, as a consequence of Proposition \ref{prop:small_N}. The proof in \cite{2017reznikovsaffvolberg} is done for $M=2p-1$ but can be modified along the lines of the proof of our Lemma \ref{lem:no_concentr} (applying the perturbation as in Figure \ref{fig:no_concentr}) in order to achieve the value $M=p$ as stated in Proposition \ref{prop:pointsep}.
\end{rmk}
Proposition \ref{prop:rough_upper_bound} below follows as in \cite[Thm.\,2.4]{2013erdelyisaff} (simply note that the restriction $\omega_N\subset A$ for finite-$N$ configurations is never used in the proof from \cite{2013erdelyisaff}).
\begin{proposition}\label{prop:rough_upper_bound}
Let $p\ge 2$ be an integer and let $1\le d\le p$ be a real number.\\
 
\noindent
{\rm (i)} For $s\ge d$ there exists a constant $c_s>0$ depending only on $s$, such that if $A\subset \mathbb R^p$ is a compact set such that $\mathcal H_d(A)>0$, then for $N\ge 2$ there holds
\begin{subequations}
\begin{eqnarray}
\mathcal P_s^*(A,N)&\le& \frac{c_s}{s-d}N^{s/d}\ ,\quad\mbox{ if }\quad s>d\ ,\label{up_bd_s>d}\\
\mathcal P_d^*(A,N)&\le& c_d N\log N\ ,\quad\mbox{ if }\quad s=d\ .\label{up_bd_s=d}
\end{eqnarray}
\noindent
{\rm (ii)} If $A\subset \mathbb R^p$ is a compact set then there exists a probability measure $\mu_A$ supported on $A$ and a constant $C_A\in(0,\infty)$ such that
\[
\int_A\frac{1}{|x-y|^s}d\mu_A(y)\le C_A\ ,\quad\mbox{ for }\quad x\in \R^p\ ,
\]
then, for all $N\ge 1$,
\begin{equation}\label{up_bd_s_gen}
\mathcal P_s^*(A,N)\le N C_A\ ,\quad\mbox{ for }\quad s>0\ .
\end{equation}
\end{subequations}
\end{proposition}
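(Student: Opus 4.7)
The plan is to follow the argument of \cite[Thm.\,2.4]{2013erdelyisaff} almost verbatim, based on the elementary averaging inequality: for any probability measure $\nu$ supported on $A$ and any measurable $F:A\to[0,+\infty]$,
\begin{equation*}
\inf_{y\in A}F(y)\le\int_A F(y)\,d\nu(y).
\end{equation*}
Applied to $F(y):=\sum_{i=1}^N K_s(x_i,y)$, this bounds $P_s(A,\omega_N)$ in terms of the $K_s(\cdot,y)$-potential of $\nu$; the bound depends only on $\nu$ and on $\{x_i\}$, so it gives the same estimate regardless of whether $\omega_N\subset A$ or $\omega_N\subset\R^p$. Taking the supremum over configurations will then bound both $\mathcal P_s(A,N)$ and $\mathcal P_s^*(A,N)$.

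For part (ii) I would simply take $\nu=\mu_A$, so that Fubini gives
\begin{equation*}
P_s(A,\omega_N)\le\sum_{i=1}^N\int_A\frac{d\mu_A(y)}{|x_i-y|^s}\le NC_A,
\end{equation*}
and passing to the supremum over $\omega_N\subset\R^p$ yields \eqref{up_bd_s_gen}.

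For part (i), the direct average against a Frostman-type measure $\mu_A$ on $A$ (which exists because $\mathcal H_d(A)>0$ and can be chosen so that $\mu_A(B(x,r))\le Cr^d$ for every $x\in\R^p$ and $r>0$) does not work, since its pointwise $s$-potential is infinite on its support when $s\ge d$. Instead, I would use a dyadic selection argument: set $\rho:=cN^{-1/d}$ for a small $c>0$, and apply the Frostman bound at scale $\rho$ and at each dyadic scale $2^k\rho$ with $k\ge 1$ to obtain
\[
\mu_A(\{y:\exists\, i,\ |y-x_i|<\rho\})\le NC\rho^d=Cc^d,\qquad
\mu_A(\{y:\#\{i:|x_i-y|\le 2^k\rho\}>M_k 2^{kd}\})\le \frac{Cc^d}{M_k},
\]
the second bound being Markov's inequality after averaging $\#\{i:|x_i-y|\le 2^k\rho\}$ against $\mu_A$. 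With $c$ small and weights $M_k$ chosen so that the total $\mu_A$-measure of the bad set is strictly less than $\mu_A(A)$, a point $y^\ast\in A$ remains that satisfies $|x_i-y^\ast|\ge\rho$ for all $i$ together with the stated dyadic density bound in every shell. Decomposing $\sum_i|x_i-y^\ast|^{-s}$ across these shells then gives
\[
\sum_{i=1}^N\frac{1}{|x_i-y^\ast|^s}\le \frac{2^s}{\rho^s}\sum_{k\ge 1}M_k\,2^{k(d-s)},
\]
which is a convergent geometric series when $s>d$; substituting $\rho=cN^{-1/d}$ produces the bound of the form $c_s N^{s/d}/(s-d)$ in \eqref{up_bd_s>d}.

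The case $s=d$ is treated by the same construction, except that now $2^{k(d-s)}=1$ makes the dyadic sum fail to converge; it must be truncated at the maximal relevant scale $k\sim \log_2(\op{diam}(A)/\rho)\sim (1/d)\log N$, which is the source of the extra $\log N$ factor in \eqref{up_bd_s=d}. The main technical obstacle is the calibration of the weights $M_k$: they must decay fast enough that $\sum_k M_k^{-1}$ is a controlled fraction of $\mu_A(A)$ (so that a suitable $y^\ast$ can be selected), and simultaneously grow slowly enough that the series $\sum_k M_k\,2^{k(d-s)}$ is bounded by a constant of the desired order in $s-d$. Once this bookkeeping is carried out exactly as in \cite[Thm.\,2.4]{2013erdelyisaff}, the proof is complete—and, as emphasized in the statement, no step of it uses the containment $\omega_N\subset A$, so the result applies to $\mathcal P_s^*$ as well as to $\mathcal P_s$.
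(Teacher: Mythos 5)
Your high-level strategy—select a Frostman measure $\mu_A$ on $A$, use a pigeonhole argument to find $y^\ast\in A$ that is at distance at least $\rho\sim N^{-1/d}$ from $\omega_N$ and carries a controlled potential, and note that nothing in this uses $\omega_N\subset A$—is exactly the reading intended by the paper, which gives only a pointer to \cite[Thm.\,2.4]{2013erdelyisaff}. Part (ii) is fine as written. The per-shell Markov bookkeeping you propose for part (i), however, cannot deliver \eqref{up_bd_s=d}. If you insist on the scale-$\rho$ exclusion together with per-shell cardinality bounds $\#\{i:|x_i-y^\ast|\le 2^k\rho\}\le M_k2^{kd}$ for $k=1,\ldots,K$ with $K\sim(1/d)\log N$, then the pigeonhole requirement forces $NC\rho^d\bigl(1+\sum_{k\le K}M_k^{-1}\bigr)<1$, while the resulting potential estimate at $y^\ast$ for $s=d$ is $\lesssim\rho^{-d}\sum_{k\le K}M_k$. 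By Cauchy--Schwarz, $\bigl(\sum_{k\le K}M_k\bigr)\bigl(\sum_{k\le K}M_k^{-1}\bigr)\ge K^2$, so combining the two constraints gives a bound no better than $NCK^2\sim N(\log N)^2$, not $N\log N$; no choice of the weights $M_k$ fixes this. (For $s>d$ the per-shell version does yield a finite constant for each fixed $s$, but with worse blow-up as $s\downarrow d$ than the form $c_s/(s-d)$ suggests.)

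The repair is to replace the \emph{family} of per-shell Markov inequalities by a \emph{single} Markov inequality on the truncated potential $\Phi_\rho(y):=\sum_i\min\{|x_i-y|^{-s},\rho^{-s}\}$, averaged against $\mu_A$. The layer-cake formula and the Frostman property give
\[
\int_A\min\{|x-y|^{-s},\rho^{-s}\}\,d\mu_A(y)\le C\rho^{d-s}+Cs\int_\rho^{\op{diam}(A)}t^{\,d-s-1}\,dt\ ,
\]
which produces the factor $1/(s-d)$ for $s>d$ and $\log(\op{diam}(A)/\rho)\sim\log N$ for $s=d$ directly. Summing over $i$, applying a single Markov inequality to $\Phi_\rho$, and intersecting with $\{y:\op{dist}(y,\omega_N)\ge\rho\}$ (which has $\mu_A$-measure at least $\tfrac12$ for $\rho=cN^{-1/d}$ with $c$ small) yields a point $y^\ast$ at which $\sum_i|x_i-y^\ast|^{-s}=\Phi_\rho(y^\ast)\le 2\int\Phi_\rho\,d\mu_A$; this gives both \eqref{up_bd_s>d} and \eqref{up_bd_s=d}. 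This is the averaging that the citation actually requires, and, once stated this way, your observation that $\omega_N$ need not lie in $A$ does indeed go through verbatim.
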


\medskip

\noindent The next result is proved as in \cite[Prop. 1.5]{2017reznikovsaffvolberg} for the case $s>d$ and as in \cite[Thm.\,2.5]{2017hrsv} for the cases $p-2<s<d$. Indeed, in the proofs of those results the fact that the configurations are constrained to the set $A$ is not used; furthermore, Proposition \ref{prop:rough_upper_bound} precisely replaces the use of results from \cite{2013erdelyisaff} in those proofs.
\begin{proposition}\label{prop:max_away_points}
Under the same hypotheses on $p$, $d$, $d'$, $s$, $A$ and $\mu_{s,A}$ as in Proposition \ref{prop:pointsep}, let $\omega_N\subset \mathbb R^p$ be an $N$-point configuration and $y^*\in A$ be a point such that
\[
\sum_{x\in \omega_N}|x-y^*|^{-s}=\min_{y\in A}\sum_{x\in \omega_N}|x-y|^{-s}
\]
is achieved. There exists a constant $C>0$, which depends only on $s$ if $s>d$ and only on $s$ and on the upper $d$-regularity constant of $\mu_{s,A}$ if $p-2<s<d\le d'\le p$, but is in either case independent of $N$, of $\omega_N$ and of the choice of $y^*$, such that
\begin{equation}\label{max_away_points}
\min_{x\in\omega_N}|x-y^*| \ge C N^{-1/d}\ .
\end{equation}
\end{proposition}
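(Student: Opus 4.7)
Let $\omega_N^*$ be an optimal configuration for $\mathcal P_s^*(A,N)$, let $y^* \in A$ realize $\min_{y \in A} \sum_{x \in \omega_N^*} |x-y|^{-s}$, and let $x^* \in \omega_N^*$ be a nearest point to $y^*$, with $r := |x^* - y^*|$. The task is to prove $r \geq CN^{-1/d}$ with $C$ independent of $N$, $\omega_N^*$, and the choice of $y^*$. The proof splits into two cases, following respectively \cite[Prop.\,1.5]{2017reznikovsaffvolberg} and \cite[Thm.\,2.5]{2017hrsv}.

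\textbf{Case $s > d$.} This is an immediate consequence of the upper bound of Proposition \ref{prop:rough_upper_bound}(i). Since $y^*$ is a minimizer and every term of the sum is positive,
\[
r^{-s} \leq \sum_{x \in \omega_N^*} |x-y^*|^{-s} = \mathcal P_s^*(A,N) \leq \frac{c_s}{s-d}\,N^{s/d},
\]
which rearranges to $r \geq \bigl((s-d)/c_s\bigr)^{1/s}\,N^{-1/d}$, with a constant depending only on $s$.

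\textbf{Case $p-2 \leq s < d$.} The analogous one-line argument using Proposition \ref{prop:rough_upper_bound}(ii) gives only $r \geq (NC_A)^{-1/s}$, which is weaker than $N^{-1/d}$ when $s < d$. To recover the correct exponent I would exploit the minimality of $y^*$ in integrated form: for any probability measure $\nu$ supported on $A$,
\[
\sum_{x \in \omega_N^*} |x-y^*|^{-s} \leq \int_A \sum_{x \in \omega_N^*} |x-y|^{-s}\, d\nu(y) = \sum_{x \in \omega_N^*} U^\nu(x).
\]
Following \cite[Thm.\,2.5]{2017hrsv}, I would take $\nu$ to be the normalized restriction of $\mathcal H_{d'}|_A$ to a small ball $B(y^*, \rho)$, with $\rho$ calibrated in terms of $r$. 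The $d'$-regularity of $A$ at $y^*$ delivers the lower bound $\mathcal H_{d'}(A \cap B(y^*, \rho)) \gtrsim \rho^{d'}$ needed to normalize $\nu$, and a layer-cake computation combined with the upper $d$-regularity of $\mu_{s,A}$ produces a uniform bound $U^\nu(x) \lesssim \rho^{-s}$ valid for all $x \in \R^p$. Substituting into the displayed inequality and optimizing in $\rho$ delivers $r \geq C N^{-1/d}$.

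\textbf{Main obstacle.} The substantive step is the balancing of the Riesz singularity with the regularity of $\mathcal H_{d'}|_A$ and the upper regularity of $\mu_{s,A}$ in Case 2; this is a standard Frostman-type layer-cake calculation, executed in detail in \cite{2017hrsv}. The adaptation to the unconstrained setting is essentially formal: neither \cite{2017reznikovsaffvolberg} nor \cite{2017hrsv} ever uses the restriction $\omega_N \subset A$---their proofs rely only on $y^* \in A$, the stated regularity conditions, and the upper bounds of Proposition \ref{prop:rough_upper_bound}, all of which remain available here---so the same arguments transcribe verbatim with our unconstrained configurations $\omega_N^* \subset \R^p$.
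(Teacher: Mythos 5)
The paper's own ``proof'' of this proposition is just a citation: it asserts that the arguments of \cite[Prop.\,1.5]{2017reznikovsaffvolberg} (for $s>d$) and \cite[Thm.\,2.5]{2017hrsv} (for $p-2<s<d$) transfer verbatim, because the restriction $\omega_N\subset A$ is never used there and Proposition~\ref{prop:rough_upper_bound} supplies the needed upper bound on $\mathcal P_s^*(A,N)$. You reproduce that observation, and your explicit one-liner in the case $s>d$ is correct and exactly in the spirit of the paper. One small remark there: you do not need $\omega_N$ to be an optimizer — the proposition is stated for an arbitrary configuration (it is later applied, in the proof of Proposition~\ref{prop:pointsep}, to perturbed configurations that are generally non-optimal). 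But your argument goes through unchanged if you replace the equality $\sum_x|x-y^*|^{-s}=\mathcal P_s^*(A,N)$ by the inequality $\sum_x|x-y^*|^{-s}=P_s(A,\omega_N)\le\mathcal P_s^*(A,N)$, which holds by definition of the supremum.

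The case $p-2\le s<d$, however, is where your sketch has a genuine gap. You correctly observe that the naive bound from Proposition~\ref{prop:rough_upper_bound}(ii) only gives $r\gtrsim N^{-1/s}$, and you correctly set up the averaging inequality $\sum_x|x-y^*|^{-s}\le\sum_xU^\nu(x)$ for $\nu$ the normalized local restriction of a $d'$-regular measure. But then the chain you propose does not close. The layer-cake bound $U^\nu(x)\lesssim\rho^{-s}$ follows purely from the $d'$-regularity of the Hausdorff measure on $A$ and does not involve $\mu_{s,A}$ at all (so the invocation of $\mu_{s,A}$'s regularity at that step is misplaced). More seriously, substituting $U^\nu(x)\lesssim\rho^{-s}$ uniformly into the averaged inequality yields only
\[
r^{-s}\ \le\ \sum_{x\in\omega_N}U^\nu(x)\ \lesssim\ N\rho^{-s},
\]
i.e.\ $r\gtrsim N^{-1/s}\rho$, and no ``optimization in $\rho$'' (which is bounded above by $\operatorname{diam}A$) can turn this into $r\gtrsim N^{-1/d}$ — the exponent is simply wrong since $1/s>1/d$. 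To get the correct exponent one has to split the sum according to the distance $|x-y^*|$, control the far part via the global bound $P_s(A,\omega_N)\le NC_A$, and control the number of nearby points by exploiting the upper regularity of $\mu_{s,A}$; that quantitative balancing is the actual substance of \cite[Thm.\,2.5]{2017hrsv} and is not captured by the uniform estimate you propose. So, as written, the second case is not a proof but a pointer to the reference with an incorrect indication of how the details are supposed to combine.
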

\noindent With Proposition \ref{prop:max_away_points} at hand, Proposition \ref{prop:pointsep} follows by a modification of the proof of Lemma \ref{lem:no_concentr}. These results allows us to proceed with the same overall strategy as for the analogous result for constrained polarization $\mathcal P_s(N,A)$, see \cite[Thm.\,2.3] {2017reznikovsaffvolberg} and \cite[Thm.\,2.3]{2017hrsv}.
\begin{proof}[{Proof of Proposition \ref{prop:pointsep}}]
For any $N$-point configuration $\omega_N$, set
\[
S_s(A,\omega_N):=\left\{y\in A:\ \sum_{x\in\omega_N}K_s(x,y)=\min_{y'\in A}\sum_{x\in\omega_N}K_s(x,y')\right\}\,.
\]
Then, by the bound \eqref{max_away_points}, for $C>0$ as in Proposition \ref{prop:max_away_points},
\begin{equation}\label{separation}
\op{dist}(\omega_N,S_s(A,\omega_N))\ge CN^{-1/d} \ .
\end{equation}
Now assume that for a radius $R>0$ and for some $x\in \mathbb R^p$ and some optimal $s$-polarization configuration $\omega_N^*$ there exist $p+1$ distinct points
\begin{equation}\label{points_bR}
x_0,\ldots,x_p\in \omega_N^*\cap B(x,R).
\end{equation}
By using the hypothesis that $s>p-2$, we will proceed along the same lines as in the proof of Lemma \ref{lem:no_concentr} in order to reach a contradiction if
\begin{equation}\label{hyp_R}
R<\eta N^{-1/d},\quad \mbox{where}\quad\eta:=\frac12 c_{s,p}C>0,
\end{equation}
where $C$ is as in Proposition \ref{prop:max_away_points} and $c_{s,p}$ is as in Lemma \ref{lem:no_concentr}. Indeed, set $R=\frac12 c_1 r$, where $r:= CN^{-1/d}$ and  $0<c_1<c_{s,p}$. Then, with these values of $c_1$ and $r$, and for a choice of $c_2>0$ to be determined, we can use the same formulas \eqref{define_new_om} as in Step 2 of the proof of Lemma \ref{lem:no_concentr} to define $\bar x$ and $\widetilde{\omega}_N$. Due to \eqref{separation}, to \eqref{points_bR} and to the choice of $c_1$, we verify that for any $y=\tilde y\in S_s(A,\widetilde{\omega}_N)$ the bounds \eqref{assum_far} hold. Then the estimates of the proof of Lemma \ref{lem:no_concentr} continue to hold, and we determine with the same choice of $c_2$ as in Step 5 that \eqref{expressi} and \eqref{final_noconc_contrad} hold for $y=\tilde y$. As a consequence of \eqref{final_noconc_contrad}, and of the assumed optimality of $\omega_N^*$, we have
\begin{equation}
\mathcal P_s(A,N)\ge P_s(A,\widetilde{\omega}_N) > P_s(A,\omega_N^*)=\mathcal P_s(A,N),
\end{equation}
which is a contradiction. It follows that under condition \eqref{hyp_R} there cannot exist $p+1$ points such that \eqref{points_bR} holds, which concludes the proof of the proposition.
\end{proof}
\subsection{Proof of Theorem \ref{thm:compact_body_old}}\label{ssec:proof_th_cbo}
The main new tool that we will use in the proof of Theorem \ref{thm:compact_body_old} is the geometric result of Proposition \ref{prop:replace_points} below, which holds for a very general class of kernels. It allows us to replace a charge $x$ positioned at positive distance from $A$ by a \emph{bounded number} of charges in $A$, without decreasing the polarization value on $A$. The principle underlying this proposition is illustrated in Figure~\ref{fig:replace_points}.
\begin{figure}[!h]
\centering
\includegraphics[width=4cm]{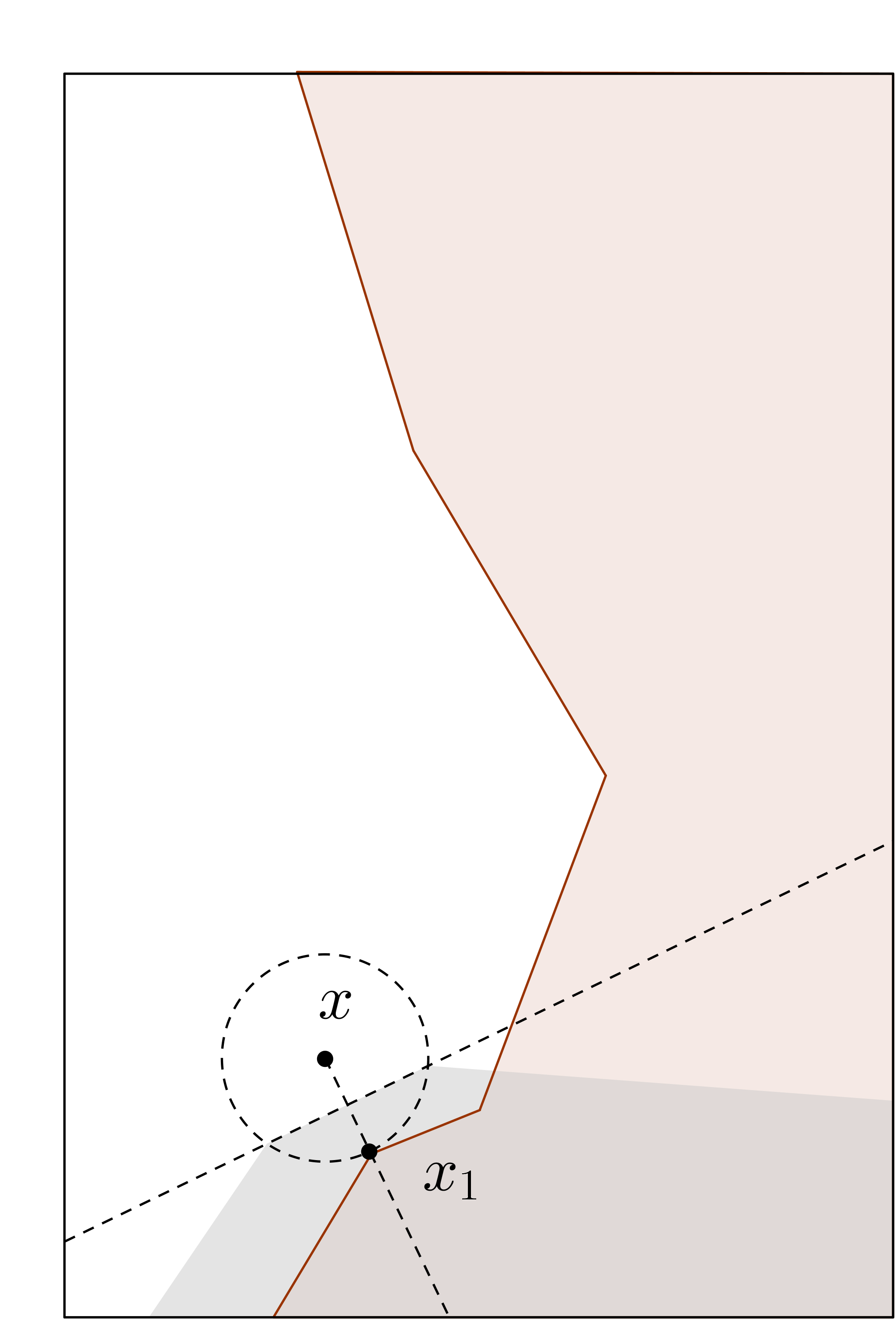}\quad
\includegraphics[width=4cm]{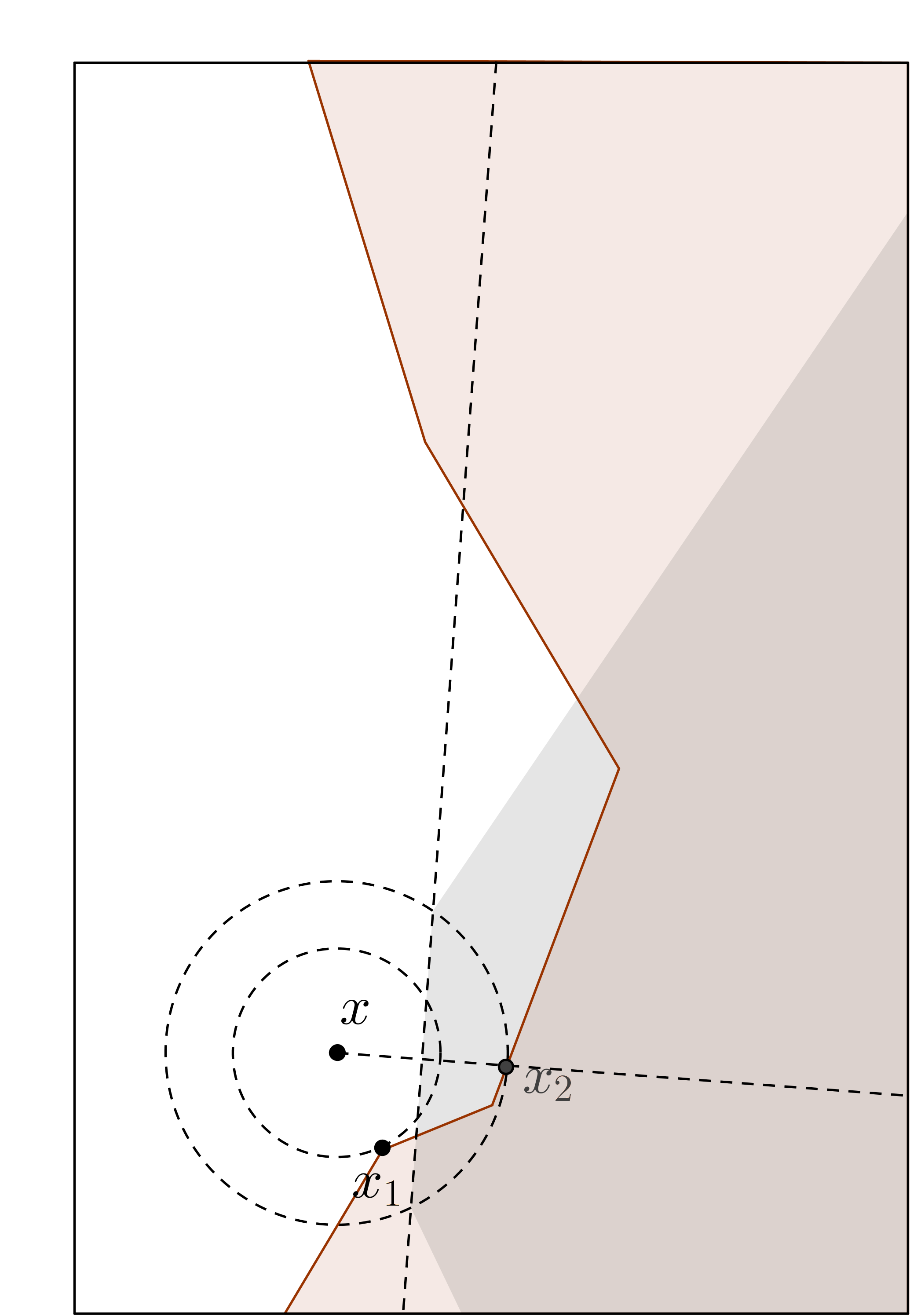}\quad
\includegraphics[width=4cm]{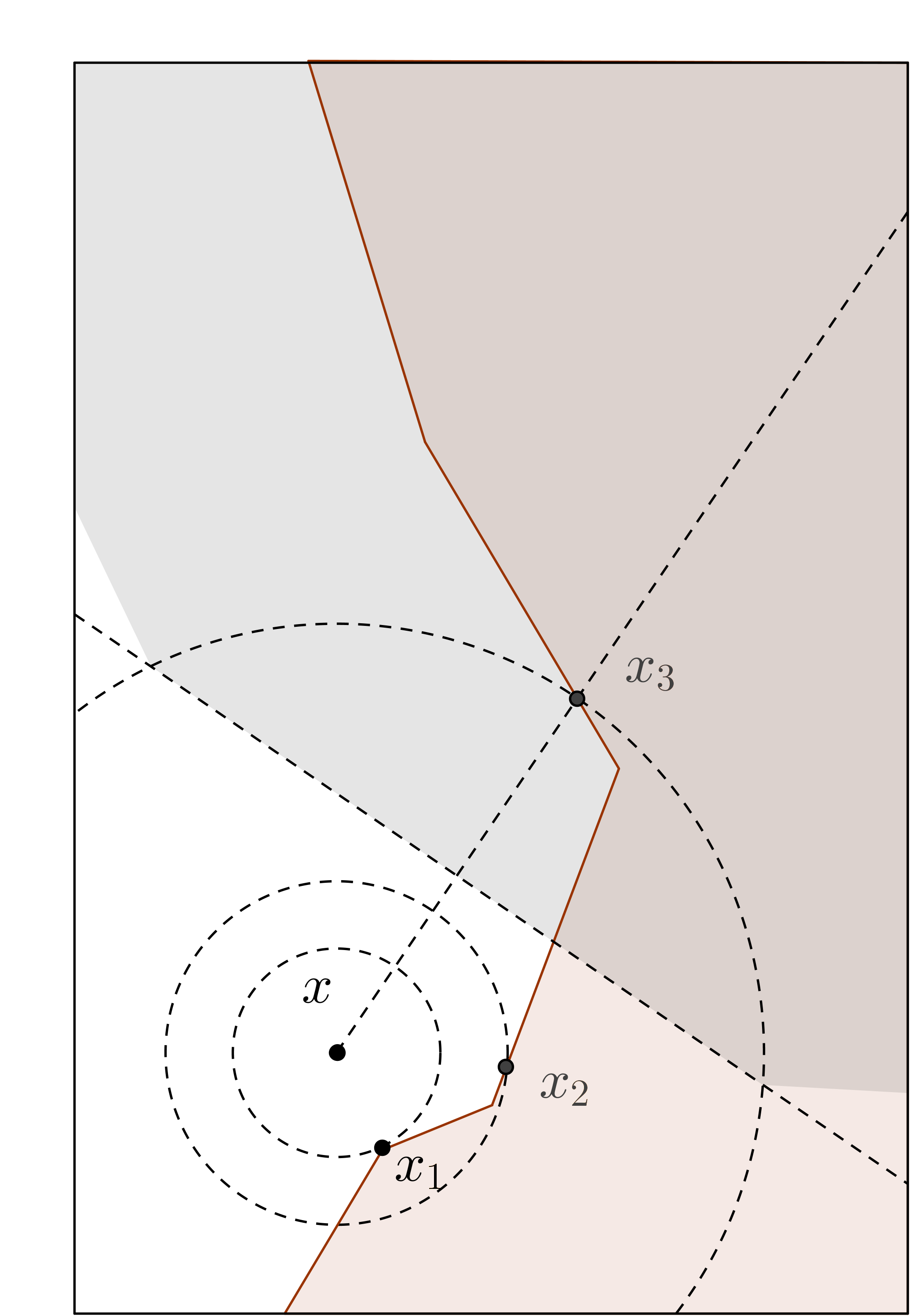}\quad
\caption{The construction from Proposition~\ref{prop:replace_points}, for $p=2$. The set $A$ is shaded in brown. Iteratively we select points $x_j\in A$ such that a charge positioned at $x_j$ creates a higher potential than $x$ (at least) on the intersection of the shaded region (which itself is the intersection of a cone from $x$ and a hyperplane) with $A$. The union of such regions eventually covers $A$. Further, any two of the so-constructed points $x_j$, viewed from $x$, form angles of at least $\pi/3$; thus, by a simple best-packing upper bound on $\mathbb S^{p-1}$, the necessary number of points can be controlled, depending only on the dimension.}
\label{fig:replace_points}
\end{figure}
\begin{proposition}\label{prop:replace_points}
For each $p\ge 2$, let $n_{\pi/6,p}>0$ be the cardinality of the best packing of $\mathbb S^{p-1}$ by spherical caps of angle $\pi/6$. Let $A\subset \R^p$ be a compact set, and let $x\not \in  A$. Then there exist points $x_1,\ldots,x_n\in A$ with $n\le n_{\pi/6,p}$, such that for all decreasing $f:\R^+\to (-\infty,+\infty]$ there holds
\begin{equation}\label{replace_points}
\forall y\in A,\quad f\left(\verti{x-y}\right)\le \max_{1\le j\le n}f\left(\verti{x_j-y}\right).
\end{equation}
\end{proposition}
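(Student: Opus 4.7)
The plan is to reduce \eqref{replace_points} to a covering statement and then run a greedy construction terminated by a packing argument on the sphere. Since $f$ is decreasing, \eqref{replace_points} holds as soon as one finds $x_1,\ldots,x_n\in A$ with $n\le C_p$ such that every $y\in A$ satisfies $\verti{x_j-y}\le \verti{x-y}$ for at least one $j$. Writing $H_j:=\{y\in\R^p: \verti{y-x_j}\le \verti{y-x}\}$ for the closed half-space bounded by the perpendicular bisector of $[x,x_j]$ and containing $x_j$, the geometric task is to arrange $A\subset \bigcup_{j=1}^n H_j$.

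I would carry this out greedily. Set $A_0:=A$, and, as long as $A_k\ne\emptyset$, let $x_{k+1}$ be any point of $A_k$ closest to $x$, then put $A_{k+1}:=A_k\setminus H_{k+1}^{\circ}$, where $H_{k+1}^{\circ}$ is the open version of $H_{k+1}$. Each $A_k$ is compact, being the intersection of $A$ with finitely many closed half-spaces $\R^p\setminus H_j^{\circ}$, so the nearest-point selection is well defined; and if the process terminates with $A_n=\emptyset$, then every point of $A$ lies in some $H_j^{\circ}\subset H_j$, as required. The key issue is then to bound the number of iterations.

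The main obstacle is the angular separation of the rays from $x$ to the $x_j$'s. For indices $j<k$, the construction ensures $\verti{x_j-x}\le\verti{x_k-x}$ (distances are non-decreasing along the greedy sequence, since $A_{k-1}\subset A_{j-1}$), and also $\verti{x_k-x_j}\ge \verti{x_k-x}$ (since $x_k\in A_{k-1}\subset \R^p\setminus H_j^{\circ}$). Setting $a:=\verti{x_j-x}$, $b:=\verti{x_k-x}$, $c:=\verti{x_k-x_j}$ and applying the law of cosines at $x$ to the triangle $xx_jx_k$ gives
\[
\cos\alpha_{jk}=\frac{a^2+b^2-c^2}{2ab}\le \frac{a}{2b}\le \frac{1}{2},
\]
where $\alpha_{jk}$ is the angle at $x$ between the rays through $x_j$ and $x_k$; hence $\alpha_{jk}\ge \pi/3$. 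Consequently the unit directions $e_j:=(x_j-x)/\verti{x_j-x}\in\mathbb S^{p-1}$ are pairwise at geodesic distance at least $\pi/3$, so the open spherical caps of angular radius $\pi/12$ centered at them are pairwise disjoint. By definition of $C_p$ this forces $n\le C_p$, so the greedy procedure must terminate in at most $C_p$ steps, which completes the proof.
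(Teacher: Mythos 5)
Your proof is correct and uses essentially the same greedy strategy as the paper: iteratively select the nearest not-yet-dominated point of $A$, show that the resulting directions from $x$ are well-separated on $\mathbb S^{p-1}$, and bound the count by the $\pi/12$-cap packing number $C_p$. Your law-of-cosines derivation of the angular separation (from $|x_j-x|\le|x_k-x|$ and $|x_k-x_j|\ge|x_k-x|$) is a clean, self-contained alternative to the paper's bookkeeping with the radial first-contact set $\op{rad}(A,x)$, the projection $\pi_{1,x}$, and cap preimages, but the underlying mechanism is identical.
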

\begin{proof}
Set
\begin{equation}\label{radial_A_x}
\op{rad}(A,x):=\left\{y\in A:\ \forall \lambda\in[0,1),\ x+\lambda(y-x)\notin A\right\}.
\end{equation}
In other words, $\op{rad}(A,x)$ contains the first contact point with $A$ of each ray starting from $x$ that intersects $A$. Also set
\[
\op{rad}_1(A,x):=\{(y-x)/\verti{y-x}:\ y\in \op{rad}(A,x)\}.
\]
Note that the projection
\begin{equation}\label{proj1}
\pi_{1,x}:\R^p\setminus\{x\}\to \mathbb S^{p-1}, \qquad \pi_{1,x}(y):=\frac{y-x}{\verti{y-x}}
\end{equation}
induces a bijection between $\op{rad}(A,x)$ and $\op{rad}_1(A,x)$.

\medskip

We now iteratively construct the set $x_1,\ldots,x_n$ as required in the statement of the proposition.

\medskip

\noindent
\textbf{Step $1$.} Fix a point $x_1\in\op{rad}(A,x)$ such that
\begin{equation}\label{choice_x1}
|x_1-x|=\min\{|x'-x|: x'\in \op{rad}(A,x)\}.
\end{equation}
As $f$ is decreasing, $f\parenth{\verti{x-y}}\le f\parenth{\verti{x_1-y}}$ for all $y$ belonging to the half-space $H(x,x_1)$, where for $a\neq b\in \mathbb R^p$ we set
\begin{equation}\label{h_12}
H(a,b):=\{y\in\R^p:|y-a|\ge |y-b|\}=\left\{y\in\R^p:\ \langle y-a,b-a\rangle \ge \frac12\verti{b-a}^2\right\}.
\end{equation}
We next let $\mathcal{K}(x_1)\subset \mathbb S^{p-1}$ be the spherical cap of angle $\pi/3$ centered at $\pi_{1,x}(x_1)$. Then  
\[
\mathcal{K}(x_1)=\pi_{1,x}\left\{u\in B({x,\verti{x-x_1}}) :\langle \pi_{1,x}(u),\pi_{1,x}(x_1)\rangle \ge \frac{1}{2}\right\}=\pi_{1,x}\left(H(x,x_1)\cap B({x,\verti{x-x_1}})\right),
\]
and by \eqref{choice_x1} and \eqref{radial_A_x}, we obtain
\begin{equation}\label{K1_prop}
A\cap\pi_{1,x}^{-1}(\mathcal{K}(x_1))\subset H(x,x_1).
\end{equation}
\textbf{Step $k+1$.} For $k\ge1$, suppose that the points $x_1,\ldots,x_k$ have already  been chosen such that
\begin{subequations}\label{partialverif}
\begin{equation}
\pi_{1,x}(x_1),\ldots,\pi_{1,x}(x_k)\in\mathbb S^{p-1}\quad \mbox{form a }\pi/3\mbox{-separated set,}
\end{equation}
with respect to the geodesic distance on $\mathbb S^{p-1}$ and such that
\begin{equation}\label{partialver1}
A\cap\pi_{1,x}^{-1}\parenth{\bigcup_{j=1}^k \mathcal{K}(x_j)}\subset A\cap \bigcup_{j=1}^kH(x,x_j).
\end{equation}
\end{subequations}
If we next choose $x_{k+1}\in\op{rad}(A,x)\setminus\pi_{1,x}^{-1}\left( \bigcup_{j=1}^k\mathcal{K}(x_j)\right)$ such that
\[
|x_{k+1}-x|=\min\left\{|x'-x|:\ x\in \op{rad}(A,x)\setminus \pi_{1,x}^{-1}\left(\bigcup_{j=1}^k\mathcal{K}(x_j)\right)\right\},
\]
then automatically $\pi_{1,x}(x_{k+1})$ is $\pi/3$-separated from $\pi_{1,x}(x_1),\ldots,\pi_{1,x}(x_k)$. Combining this with the bound \eqref{K1_prop} for the point $x_{k+1}$, conditions \eqref{partialverif} now hold with $k$ replaced by $k+1$. Directly from the definition of $n_{\pi/6,p}$, we see that the above iterative construction must stop at step $n$ for some $n\le n_{\pi/6,p}$. After step $n$ we have
\begin{equation}\label{cover_caps}
\op{rad}_1(A,x)= \bigcup_{j=1}^n\mathcal{K}(x_j)
\end{equation}
and by \eqref{partialver1},
\begin{equation}\label{final_covercones}
f\left(\verti{x-y}\right)\le \max_{1\le j\le n}f\left(\verti{x_j-y}\right)\quad\mbox{for}\quad y\in A\cap \pi_{1,x}^{-1}\parenth{\bigcup_{j=1}^n \mathcal{K}(x_j)}= A.
\end{equation}
The last inclusion in \eqref{final_covercones} follows from \eqref{cover_caps}. The claim \eqref{replace_points} now follows from \eqref{final_covercones}.
\end{proof}
\begin{proof}[Completion of Proof of Theorem \ref{thm:compact_body_old}:]
The statement follows from the two inequalities
\begin{equation}\label{twoineq}
\limsup_{N\to\infty}\frac{\mathcal P_s(A,N)}{\tau_{s,d}(N)}\le\lim_{N\to\infty}\frac{\mathcal P_s^*(A,N)}{\tau_{s,d}(N)}\le \liminf_{N\to\infty}\frac{\mathcal P_s(A,N)}{\tau_{s,d}(N)}.
\end{equation}
The first inequality follows directly from the simple bound \eqref{inequality}, so we only need to prove the second inequality. For this purpose, fix $\epsilon>0$ and consider for fixed $N$ a configuration $\omega_N^*$ optimizing $\mathcal P_s^*(A,N)$. By \eqref{bound_k_1} of Theorem \ref{thm:bound_K} we have
\begin{equation}\label{card_not_ae}
\#\parenth{\omega_N^*\setminus A_\epsilon}\le \kappa_{s,p}\frac{\mathcal L_p\left(\left(\op{conv}(A)\right)_{\epsilon c_{s,p}}\right)}{\epsilon^p}:=C_0(\epsilon).
\end{equation}
Next, for $\eta>0$ depending on $s,d,A$ as in Proposition \ref{prop:pointsep} we use the Besicovitch covering theorem in order to cover $A_\epsilon\setminus A$ by a finite collection of balls of radius $\eta N^{-1/p}$ which is the union of at most $N_{\mathrm{Bes},p}$ collections of disjoint balls, where $N_{\mathrm{Bes},p}$ depends only on $p$. In particular, all balls in the cover are then contained in $(A_\epsilon\setminus A)_\epsilon$ if  $N>(\eta/\epsilon)^p$.

\medskip

By the weak point separation bound of Proposition \ref{prop:pointsep} combined with a volume comparison argument, for $N>(\eta/\epsilon)^p$ we have
\begin{equation}\label{bound_card}
\#\parenth{\omega_N^*\cap\parenth{A_\epsilon\setminus A}}\le \frac{p\ N_{\mathrm{Bes},p}}{\beta_p\eta^p}\mathcal L_p\parenth{\parenth{A_\epsilon\setminus A}_\epsilon}N=:C_1(\epsilon)N\quad\mbox{and}\quad\lim_{\epsilon\to 0}C_1(\epsilon)=0,
\end{equation}
where $\beta_p$ is volume of the $p$-dimensional unit ball $B_p(0,1)$ and where in the last part we used the regularity of the $\mathcal L_p$-measures and the fact that $A$ being compact implies $\mathcal L_p(A)<\infty$.

\medskip

By Proposition \ref{prop:replace_points} for each $x\in\omega_N\cap\parenth{A_\epsilon\setminus A}$ there exists a configuration $\omega_x\subset A$ such that
\begin{equation}\label{omegax}
1\le\#\omega_x\le n_{\pi/6,p},\quad\mbox{and}\quad\forall y\in A,\ \frac{1}{\verti{x-y}^s}\le\max_{x^\prime\in\omega_x}\frac{1}{\verti{x^\prime-y}^s}\le\sum_{x^\prime\in\omega_x}\frac{1}{\verti{x^\prime-y}^s}.
\end{equation}
We then define a new configuration $\omega_{M_N}\subset A$ of cardinality $M_N$ by
\begin{equation}\label{def_omegam}\
\omega_{M_N}:=\parenth{\omega_N^*\cap A}\medcup\bigcup_{x\in\omega_N^*\cap\parenth{A_\epsilon\setminus A}} \omega_x,
\end{equation}
where by \eqref{card_not_ae}, \eqref{bound_card} and the first part of \eqref{omegax} we have
\begin{equation}\label{bound_pol2}
N-C_0(\epsilon)\le M_N\le \#(\omega_N^*\cap A)+n_{\pi/6,p}\#\left(\omega_N^*\cap(A_\epsilon\setminus A)\right)\le(1+n_{\pi/6,p}C_1(\epsilon))N.
\end{equation}
Then by the bounds \eqref{card_not_ae} and the second part of \eqref{omegax}, we find that
\begin{eqnarray}\label{bound_pol}
\mathcal P_s(A,M_N)&\ge& P_s(A,\omega_{M_N})\nonumber\\
&\ge& P_s(A,\omega_N^*\cap A_\epsilon) \nonumber\\
&\ge& P_s(A,\omega_N^*) - \max_{y\in A}\sum_{x\in \omega_N^*\setminus A_\epsilon}\frac{1}{\verti{x-y}^s}\nonumber\\
&\ge&P_s(A,\omega_N^*) - \frac{C_0(\epsilon)}{\epsilon^s}=: P_s(A,\omega_N^*) - C_2(\epsilon),
\end{eqnarray}
where $C_2(\epsilon)$ depends only on $\epsilon,p,s,A$; in particular $C_2(\epsilon)$ is independent of $N$.

\medskip

Let now $\{N_k\}_{k\in\N}$ be a strictly increasing subsequence that realizes the limit inferior in \eqref{twoineq} and let the sequence $\overline N_k$ be such that, for each $k\in\N$,
\begin{equation}\label{nk_barnk}
\frac{N_k}{1+n_{\pi/6,p}C_1(\epsilon)}\in \left[\overline N_k,\overline N_k+1\right).
\end{equation}
Note that $\overline N_k\to\infty$ as $k\to\infty$. Using the fact that $\mathcal P_s(A,N)$ is increasing in $N$, \eqref{nk_barnk}, \eqref{bound_pol2} and \eqref{bound_pol} give for $s>d$ the bounds
\begin{eqnarray}\label{end_asy}
\frac{\mathcal P_s(A,N_k)}{N_k^{s/d}}&\ge& \frac{\mathcal P_s(A,M_{\overline N_k})}{N_k^{s/d}}\ge\frac{\mathcal P_s^*(A,\overline N_k)-C_2(\epsilon)}{N_k^{s/d}}\nonumber\\
&\ge&\frac{\mathcal P_s^*(A,\overline N_k) - C_2(\epsilon)}{\parenth{(1+n_{\pi/6,p}C_1(\epsilon))(\overline N_k+1)}^{s/d}}\nonumber\\
&=&\frac{1}{\parenth{1+n_{\pi/6,p}C_1(\epsilon)}^{s/d}}\ \frac{\mathcal P_s^*(A,\overline N_k)}{\overline N_k^{s/d}}\ \frac{\mathcal P_s^*(A,\overline N_k)-C_2(\epsilon)}{\mathcal P_s^*(A,\overline N_k)}\left(\frac{\overline N_k}{\overline N_k +1}\right)^{s/d}.
\end{eqnarray}
Due to the fact that $\mathcal P_s^*(A,N)\to\infty$ as $N\to\infty$ by compactness of $A$ and to the fact that $\overline N_k\to\infty$ as $k\to\infty$, using \eqref{bound_card} we find
\begin{equation}\label{limit_taus}
\lim_{k\to\infty}\frac{\mathcal P_s^*(A,\overline N_k)-C_2(\epsilon)}{\mathcal P_s^*(A,\overline N_k)}\left(\frac{\overline N_k}{\overline N_k +1}\right)^{s/d}=1.
\end{equation}
By \eqref{limit_taus} and \eqref{end_asy}, we thus find
\begin{equation}\label{bound_pf}
\liminf_{N\to\infty}
\frac{\mathcal P_s(A,N)}{N^{s/d}}
=\lim_{k\to\infty}\frac{\mathcal P_s(A,N_k)}{N_k^{s/d}}\ge\parenth{1+n_{\pi/6,p}C_1(\epsilon)}^{-s/d}\lim_{k\to\infty}\frac{\mathcal P_s^*(A,\overline N_k)}{\overline N_k^{s/d}}.
\end{equation}
In \eqref{bound_pf} we use the hypothesis that the limit of $\mathcal P_s^*(A,N)/N^{s/d}$ exists as an extended real number. By now taking $\epsilon\to 0$ and using \eqref{bound_card}, the desired second inequality in \eqref{twoineq} follows, and this completes the proof of the theorem as well for the case $s>d$. The remaining range of exponents $p-2<s<d$ is treated as above, with the difference that the function $\tau_{s,d}(N)=N^{s/d}$ is replaced according to the definition \eqref{tau_sd}. We leave the verifications to the reader.

Finally, suppose the hypotheses of case (ii) of Proposition \ref{prop:pointsep}  hold. Theorem~\ref{thm:ohtsuka} and Proposition~\ref{prop:rough_upper_bound}(ii) imply the limit $h_{s,d}^*(A)$ exists and is finite.   
\end{proof}
%
%

%

\section{Proof of   Theorem \ref{thm:compact_body}}\label{sec:pf_compact_body}

We begin with a known lemma for constrained polarization.   
\begin{lemma}[\cite{bhrs2016preprint} or \cite{book}]\label{lem:constsub}
Let $1\le d\le p$, $s\ge d$,   and  $A, B\subset\R^p$ be nonempty sets. Then
\begin{equation}\label{ubounda}
\underline h_{s,d} (A\cup B)^{-d/s}\le\underline{h}_{s,d} (A)^{-d/s} + \underline h_{s,d} (B)^{-d/s}.
\end{equation}
\end{lemma}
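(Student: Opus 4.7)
The plan is to build a near-optimal configuration on $A \cup B$ by concatenating near-optimal configurations on $A$ and $B$ separately and then optimizing the splitting ratio between the two parts. First I would dispose of the degenerate cases: if $\underline h_{s,d}(A)=0$ or $\underline h_{s,d}(B)=0$ the right-hand side of \eqref{ubounda} equals $+\infty$ and nothing is to prove, while the boundary cases where one value is $+\infty$ are recovered below by letting the splitting parameter tend to $0$ or $1$. So henceforth assume both $\underline h_{s,d}(A)$ and $\underline h_{s,d}(B)$ are finite and strictly positive, and set $a:=\underline h_{s,d}(A)^{-d/s}$, $b:=\underline h_{s,d}(B)^{-d/s}$.

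For a parameter $t\in(0,1)$ to be optimized and each $N\in\N$, put $N_A:=\lfloor tN\rfloor$ and $N_B:=N-N_A$, and select multisets $\omega_A\subset A$, $\omega_B\subset B$ of cardinalities $N_A$ and $N_B$ satisfying $P_s(A,\omega_A)\ge(1-\delta_N)\mathcal P_s(A,N_A)$ and the analogous estimate on $B$, with $\delta_N\downarrow 0$; these exist by the definition of $\mathcal P_s$ as a supremum. Since $K_s\ge 0$, for every $y\in A$ we have $\sum_{x\in\omega_A\cup\omega_B}K_s(x,y)\ge\sum_{x\in\omega_A}K_s(x,y)\ge P_s(A,\omega_A)$, and symmetrically for $y\in B$. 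Taking the infimum over $y\in A\cup B$ gives
$$\mathcal P_s(A\cup B,N)\;\ge\;(1-\delta_N)\min\bigl(\mathcal P_s(A,N_A),\,\mathcal P_s(B,N_B)\bigr).$$

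The next step is to divide by $\tau_{s,d}(N)$ and pass to $\liminf_{N\to\infty}$. Using the bound $\mathcal P_s(X,N')\ge(\underline h_{s,d}(X)-\varepsilon)\tau_{s,d}(N')$ valid for $N'$ large enough (immediate from the definition of $\liminf$), together with the asymptotics $\tau_{s,d}(N_A)/\tau_{s,d}(N)\to t^{s/d}$ when $s>d$ and $\tau_{d,d}(N_A)/\tau_{d,d}(N)\to t$ when $s=d$, I obtain in the case $s>d$
$$\underline h_{s,d}(A\cup B)\;\ge\;\min\bigl(t^{s/d}\,\underline h_{s,d}(A),\;(1-t)^{s/d}\,\underline h_{s,d}(B)\bigr),$$
and the analogous inequality with the exponent $s/d$ replaced by $1$ when $s=d$. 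Maximizing the right-hand side in $t\in(0,1)$ by balancing the two arguments of the $\min$ yields $t=a/(a+b)$ and a common value of $(a+b)^{-s/d}$, which after rearrangement is exactly \eqref{ubounda}.

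The scheme is a standard ``splitting and optimizing'' argument familiar from analogous subadditivity lemmas in the energy setting (cf.\ the discussion in Remark \ref{rmk:mink_1plate}), and I do not anticipate a genuine obstacle. The only points of care are (i) extracting near-optimal finite-$N$ configurations, which is immediate from $\mathcal P_s$ being a supremum, and (ii) the order of limits: the inequality $\mathcal P_s(X,N')\ge(\underline h_{s,d}(X)-\varepsilon)\tau_{s,d}(N')$ holds only for $N'$ large depending on $\varepsilon$, so one must first fix $\varepsilon$, take $\liminf$ in $N$, and only afterwards send $\varepsilon\downarrow 0$.
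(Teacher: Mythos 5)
Your proof is correct and follows essentially the same ``split-and-balance'' strategy that the paper uses for the unconstrained analogue in Lemma~\ref{lem:up_low_bound}(i): concatenate near-optimal configurations on $A$ and $B$ of sizes $N_A+N_B=N$, note that the plate value of the union on $A\cup B$ is at least the minimum of the two individual plate values (since $K_s\ge0$), normalize by $\tau_{s,d}(N)$, and then balance the splitting ratio $t$ — your computation $t=a/(a+b)$ is exactly what the paper packages into Lemma~\ref{p_aux}. The only superficial difference is parameterization: the paper, because it mixes $\overline{h}^*$ and $h^*$ in \eqref{ubound}, must pick $N_1$ along a subsequence attaining the $\limsup$ and then define $N=\lfloor N_1/\alpha\rfloor$, whereas your version with $\underline h$ throughout allows you to fix $t$ first and take $N\to\infty$ directly, which is a clean simplification available precisely because the $\liminf$-version of the bound $\mathcal P_s(X,N')\ge(\underline h_{s,d}(X)-\varepsilon)\tau_{s,d}(N')$ holds eventually for \emph{all} large $N'$.
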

We remark that the analogous subadditivity result holds with $\underline h_{s,d}$ replaced by $\underline h_{s,d}^*$ in \eqref{ubounda}, but we will not need that result in this paper.  
However,  the two related results given in the next lemma do play an essential role in the proofs of part (ii) of Theorem~\ref{thm:compact_body} and of Theorem~\ref{thm:rectifiable}.  This lemma is proved using similar arguments as in \cite[Sec. 6, 7]{bhrs2016preprint} and \cite[Sec. 14.7]{book} for the one-plate polarization problem $\mathcal P_s$.   We provide a sketch of the proof for the convenience of the reader.
\newcommand{\OL}{\overline}
\newcommand{\W}{\widetilde}

\begin{lemma}\label{lem:up_low_bound}
Let $1\le d\le p$, $s\ge d$,   and  $A, B\subset\R^p$ be nonempty sets. 
\begin{enumerate}
\item[\rm (i)] If the limits $h_{s,d}^*(A\cup B)$ and $h_{s,d}^*(B)$ exist, then
\begin{equation}\label{ubound}
h_{s,d}^*(A\cup B)^{-d/s}\le\overline{h}_{s,d}^*(A)^{-d/s} + h_{s,d}^*(B)^{-d/s}.
\end{equation}
\item[\rm (ii)] If $\op{dist}(A,B)>0$, then
\begin{equation}\label{lbound}
\overline{h}_{s,d}^*(A\cup B)^{-d/s}\ge\overline{h}_{s,d}^*(A)^{-d/s} + \overline{h}_{s,d}^*(B)^{-d/s}\ .
\end{equation}
%
\item[\rm (iii)] If $A\subset \R^p$ is such that  $0<\overline{h}_{s,d}^*(A)<\infty$, $\mathcal N\subset \mathbb N$ is any sequence and $\{\W\omega_N\}_{N\in\mathcal{N}}$ are $N$-point configurations in $\R^p$  such that 
\begin{equation}\label{uboundseq}
\lim\limits_{N\to \infty \atop N\in \mathcal N}{\frac {P_s(A,\W\omega_N)}{N^{s/d}}}=\overline{h}_{s,d}^*(A),
\end{equation}
then for any $B\subset A, B\neq \emptyset$ and any $\epsilon>0$,
\begin{equation}\label{NBepsilon}
\liminf\limits_{N\to \infty \atop N\in \mathcal N}\frac{\#(\W\omega_N\cap B_\epsilon)}{N}\ge  \left(\frac{\overline h_{s,d}^*(A)}{\overline h_{s,d}^*(B)}\right)^{d/s} .
\end{equation}
\end{enumerate}
\end{lemma}

We remark that  assertion (iii) above with $B=A$ shows that if $\{\W\omega_N\}_{N\in\mathcal{N}}$ satisfies \eqref{uboundseq}, then any weak-$*$ limit measure of the normalized counting measures 
$\{\nu(\widetilde \omega_N)\}_{N\in \mathcal{N}}$ is supported on the closure of $A$. 

The following elementary result (whose proof is omitted) will be useful in the proof of Lemma~\ref{lem:up_low_bound}.
\begin {lemma}\label {p_aux}
Let $s\ge d>0$ and $b,c\geq 0$. Then the function $f(t):=\min\left\{t^{s/d}b,(1-t)^{s/d}c\right\}$ has maximum value $\left(b^{-d/s}+c^{-d/s}\right)^{-s/d}$ on the interval $[0,1]$. If both numbers $b$ and $c$ are positive, the maximum is attained at the unique point
$$
t^\ast:=\frac {c^{d/s}}{b^{d/s}+c^{d/s}}.
$$
\end {lemma}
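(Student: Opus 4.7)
The plan is a straightforward calculus argument. I would set $g(t):=b\,t^{s/d}$ and $h(t):=c\,(1-t)^{s/d}$ on $[0,1]$, so that $f=\min\{g,h\}$; note that $g$ is continuous and nondecreasing with $g(0)=0$ and $g(1)=b$, while $h$ is continuous and nonincreasing with $h(0)=c$ and $h(1)=0$.

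The degenerate case $b=0$ or $c=0$ is immediate: then $f\equiv 0$ on $[0,1]$, matching the stated value under the convention that $(b^{-d/s}+c^{-d/s})^{-s/d}$ is interpreted as $0$ whenever one of $b,c$ vanishes. For the main case $b,c>0$, I would observe that $g-h$ is strictly increasing from $-c<0$ at $t=0$ to $b>0$ at $t=1$, hence there is a unique $t^\ast\in(0,1)$ with $g(t^\ast)=h(t^\ast)$. On $[0,t^\ast]$ the minimum of $g$ and $h$ equals $g$, so $f(t)\le g(t^\ast)$; on $[t^\ast,1]$ the minimum equals $h$, so $f(t)\le h(t^\ast)=g(t^\ast)$. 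This shows that the maximum of $f$ is attained uniquely at $t^\ast$.

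Solving $b\,t^{s/d}=c\,(1-t)^{s/d}$ yields $t/(1-t)=(c/b)^{d/s}$, which gives the expression for $t^\ast$ in the statement. Substituting $t^\ast=c^{d/s}/(b^{d/s}+c^{d/s})$ into $b\,t^{s/d}$ and simplifying the identity $b\,c=(b^{d/s}c^{d/s})^{s/d}$ produces the claimed maximum value $(b^{-d/s}+c^{-d/s})^{-s/d}$. Being entirely elementary, there is no real obstacle here; the only point requiring mild care is the book-keeping of the exponents and the bookkeeping in the degenerate case.
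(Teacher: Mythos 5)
Your argument is correct and complete; the paper explicitly leaves the proof of this lemma to the reader, so there is no internal proof to compare against, but your approach (monotonicity of $g-h$ to locate the unique crossing $t^\ast$, then evaluating $b(t^\ast)^{s/d}$ and simplifying) is exactly the natural elementary argument one would expect, including the convention handling the degenerate cases $b=0$ or $c=0$.
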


\begin{proof}[Proof of Lemma~\ref{lem:up_low_bound}]

 We leave it to the reader to verify that the inequalities  in  Lemma~\ref{lem:up_low_bound} hold if any of its terms   are 0 or $\infty$.   Thus, hereafter, we
 assume the terms appearing in these inequalities are positive and finite.

 We first establish the inequality   \eqref{ubound}.
Let   $N,N_1,N_2\in \N$ be such that $N_1+N_2=N$. Let $\omega_{N_1}^*\subset\R^p$ be an $N_1$-point configuration such that $P_s(A,\omega_{N_1}^*)=\mathcal P_s^*(A,N_1)$ and let $\omega_{N_2}^*\subset\R^p$ be an $N_2$-point configuration such that $P_s(B,\omega_{N_2}^*)=\mathcal P_s^*(B,N_2)$.  Then, with $\widetilde\omega_N:=\omega_{N_1}^*\cup \omega_{N_2}^*$, we have
\begin {equation}\label{Psunion}
\begin {split}
\mathcal {P}_s^*(A\cup B,N) &\ge P_s(A\cup B,\widetilde \omega_N)\\
&= \min \left\{P_s(A,\widetilde \omega_N),P_s( B,\widetilde \omega_N)\right\}\\
&\ge \min \left\{P_s(A,  \omega_{N_1}^*),P_s( B, \omega_{N_2}^*)\right\}\\
&=\min \left\{\mathcal{P}^*_s(A,  N_1),\mathcal{P}^*_s(B,  N_2))\right\},
\end {split}
\end {equation}
and so, 
\begin{equation}\label{Psunion2}
\frac{\mathcal P_s^*(A\cup B,N)}{\tau_{s,d}(N)}\ge  \min \left\{\left(\frac {\tau_{s,d}(N_1)}{\tau_{s,d}(N)}\right) \frac {\mathcal P_s^*(A,N_1)}{\tau_{s,d}(N_1)},\left(\frac {\tau_{s,d}(N_2)}{\tau_{s,d}(N)}\right) \frac {\mathcal P_s^*(B,N_2)}{\tau_{s,d}(N_2)}\right\}.
\end{equation}

Suppose  that both $h_{s,d}^*(A\cup B)$ and $h_{s,d}^*( B)$ exist and define
\begin{equation}\label{alphadef_2}
\alpha:=\frac  { h_{s,d}^*(B)^{d/s}}{\overline h_{s,d}^*(A)^{d/s}+  h_{s,d}^*(B)^{d/s}}.
\end{equation} 
For  $N_1 \in \N$,   let  $N=\lfloor N_1/\alpha\rfloor$ and $N_2=N-N_1$ so that  $N_1+N_2=N$ as above.  
Let $\mathcal{N}_1\subset \N$ be such that 
\[
\lim_{N_1\to \infty \atop N_1\in \mathcal{N}_1} \frac {\mathcal P_s^*(A,N_1)}{\tau_{s,d}(N_1)}=\overline h_{s,d}^*(A)^{d/s}.
\]
Note that $\alpha\in(0,1)$ due to our hypothesis on the terms in the lemma not being $0$ or $\infty$, and in this case we have for $s\ge d$,
\begin{equation}\label{log_fract}
 \lim_{N\to\infty}\frac{\tau_{s,d}(N_1)}{\tau_{s,d}(N)}= \alpha^{s/d} \quad \text{ and }\quad \lim_{N\to\infty}\frac{\tau_{s,d}(N_2)}{\tau_{s,d}(N )} = (1-\alpha)^{s/d}.
\end{equation}
Then, taking the limit as $N_1\to \infty$, $N_1\in \mathcal{N}_1$, of 
\eqref{Psunion2}, using \eqref{log_fract}  and   Lemma~\ref{p_aux} we obtain
\begin {equation}\label {9L}
\begin {split}
 h_{s,d}^*(A\cup B)&\geq \min\left\{\alpha^{s/d}\overline h_{s,d}^*(A),(1-\alpha)^{s/d}  h_{s,d}^*(B)\right\} =\left( \overline{h}_{s,d}^*(A)^{-d/s} +   h_{s,d}^*(B)^{-d/s}\right)^{-s/d},\end {split}
\end {equation}
which proves assertion (i).

 To prove \eqref{lbound}, let $\op{dist}(A,B)>0$ and $\{\W \omega_N\}_{N\in \mathcal N_0}$ be any sequence of $N$-point configurations in $\R^p$ such that
\begin{equation}\label{AcupBopt}
\lim\limits_{N\to \infty \atop N\in \mathcal N_0}{\frac {P_s(A\cup B,\W\omega_N)}{N^{s/d}}}=\OL h_{s,d}^*(A\cup B).
\end{equation}
Then for any $N\in \mathcal N_0$ and $\epsilon>0$,
\begin {equation}\label {p_upper}
\begin {split}
&P_s(A\cup B,\W\omega_N)=\min\left\{P_s(A,\W\omega_N), P_s(B,\W\omega_N)\right\}\\
&\le \min\left\{P_s(A,\W\omega_N\cap A_\epsilon), P_s(B,\W\omega_N\cap B_\epsilon)\right\} +N\epsilon^{-s}\\
&\le \min\left\{\mathcal{P}_s^*(A,N_{A,\epsilon}), \mathcal{P}_s^*(B,N_{B,\epsilon})\right\} +N\epsilon^{-s},
\end {split}
\end {equation}
where
$$
N_{A,\epsilon}:=\# \left(\W\omega_N\cap A_\epsilon\right)\ \ \ {\rm and}\ \ \ N_{B,\epsilon}:=\#(\W\omega_N\cap B_\epsilon).
$$\\

Let $\mathcal N_1\subset \mathcal N_0$ be any infinite subset such that the limit
$$
\alpha:=\lim\limits_{N\to\infty\atop N\in \mathcal N_1}{\frac {N_{A,\epsilon}}{N}}
$$
exists and belongs to $(0,1)$, leaving the cases $\alpha=0$ and $\alpha=1$ to the reader. Then from \eqref {p_upper}, we  have
\begin {equation}\label {p_main}
\begin {split}
&\OL h_{s,d}^*(A\cup B)=\lim\limits_{N\to \infty\atop N\in \mathcal N_1}{\frac {P_s(A\cup B,\W\omega_N)}{\tau_{s,d}(N)}}\\
&\leq \limsup\limits_{N\to \infty\atop N\in \mathcal N_1}\ \min\left\{\left(\frac {\tau_{s,d}(N_{A,\epsilon})}{\tau_{s,d}(N)}\right)\cdot \frac {\mathcal P_s^*(A,N_{A,\epsilon})}{\tau_{s,d}(N_{A,\epsilon})},\,\left(\frac {\tau_{s,d}(N_{B,\epsilon})}{\tau_{s,d}(N)}\right)\cdot \frac {\mathcal P_s^*(B,N_{B,\epsilon})}{\tau_{s,d}(N_{B,\epsilon})}\right\}.
\end {split}
\end {equation}
If $\epsilon<\frac12 \mathrm{dist}(A,B)$ then $A_\epsilon$ and $B_\epsilon$ are disjoint, therefore $N_{A,\epsilon}+N_{B,\epsilon}\le N$. Using this and the fact that $\alpha\in (0,1)$, we obtain
\[
 \limsup\limits_{N\to\infty\atop N\in\mathcal N_1}\frac{\log N_{A,\epsilon}}{\log N}=1\quad\mbox{and}\quad \limsup\limits_{N\to\infty\atop N\in\mathcal N_1}\frac{\log N_{B,\epsilon}}{\log N}\le 1,
\]
and thus for all $s\ge d$ there holds
\[
  \limsup\limits_{N\to\infty\atop N\in\mathcal N_1}\frac{\tau_{s,d}(N_{A,\epsilon})}{\tau_{s,d}(N)}=\alpha^{s/d}\quad\mbox{and}\quad \limsup\limits_{N\to\infty\atop N\in\mathcal N_1}\frac{\tau_{s,d}(N_{B,\epsilon})}{\tau_{s,d}(N)}\le (1-\alpha)^{s/d}.
\]
Plugging the above into \eqref{p_main} we get
\begin {equation}\label {p_100}
\OL h_{s,d}^*(A\cup B)\leq \min\left\{\alpha^{s/d}\OL h_{s,d}^*(A),(1-\alpha)^{s/d}\OL h_{s,d}^*(B)\right\}.
\end {equation}
Appealing to Lemma \ref{p_aux}, it follows that
$$
\OL h_{s,d}^*(A\cup B)\leq \left(\OL h_{s,d}^*(A)^{-d/s}+\OL h_{s,d}^*(B)^{-d/s}\right)^{-s/d},
$$
which proves assertion (ii).   

Finally, suppose $B\subset A$ and $\{\W\omega_N\}_{N\in\mathcal{N}}$ is such that \eqref{uboundseq} holds.
The inequality \eqref {p_upper} with $B\subset A$ gives
\begin {equation}\label {p_upper2}
\frac{P_s(A,\W\omega_N)}{\tau_{s,d}(N)}\le  \frac{\mathcal{P}_s^*(B,N_{B,\epsilon})}{\tau_{s,d}(N_{B,\epsilon})}\frac{\tau_{s,d}(N_{B,\epsilon})}{\tau_{s,d}(N)} +\frac{N\epsilon^{-s}}{\tau_{s,d}(N)},
\end {equation}
Taking the limit inferior as $N\to \infty$ with $N\in \mathcal{N}$
\begin {equation}\label {p_upper3}
\overline{h}_{s,p}^*(A)=\liminf\limits_{N\to \infty \atop N\in \mathcal N}\frac{P_s(A,\W\omega_N)}{\tau_{s,d}(N)}\le  \liminf\limits_{N\to \infty \atop N\in \mathcal N}\frac{\mathcal{P}_s^*(B,N_{B,\epsilon})}{\tau_{s,d}(N_{B,\epsilon})}\frac{\tau_{s,d}(N_{B,\epsilon})}{\tau_{s,d}(N)} \le \overline{h}_{s,p}^*(B)
\left(\liminf\limits_{N\to \infty \atop N\in \mathcal N} \frac{ N_{B,\epsilon}}{ N}\right)^{s/d},
\end {equation}
which proves assertion (iii).   
\end {proof}

%
%
\noindent{\em Completion of Proof of  Theorem \ref{thm:compact_body} (see Appendix \ref{app-corrigendum} as well).}\\
%
As mentioned in the remarks following the statement of  Theorem \ref{thm:compact_body}, it is proved in \cite{bhrs2016preprint}  that for $s>p$  the second equality in \eqref{asymptotics} holds     for compact sets in $\R^p$ with  boundary of $\mathcal L_p$ measure zero and is known from \cite{2014borodachovbosuwan} that this equality holds for arbitrary compact sets when $s=p$.   We will make use of these facts in our proof. 

Let $A\subset\R^p$ be compact. We will separately establish for $s\ge p$ the following two inequalities:
\begin{equation}\label{underh_p}
\underline{h}_{s,p}^*(A)\ge\frac{\sigma_{s,p}}{\mathcal L_p(A)^{s/p}}
\end{equation}
and
\begin{equation}\label{overh_p}
\overline{h}_{s,p}^*(A)\le\frac{\sigma_{s,p}}{\mathcal L_p(A)^{s/p}}.
\end{equation}
To prove \eqref{underh_p}, let $\epsilon>0$ and select a set $G\subset \R^p$ such that $\mathcal L_p(\partial G)=0$, $A\subset G$ and $\mathcal L_p(G\setminus A)<\epsilon$. Then using \eqref{inclusion}, we find
\begin{equation}
\underline h_{s,p}^*(A)\ge \underline h_{s,p}^*(G)\ge h_{s,p}(G)=\frac{\sigma_{s,p}}{\mathcal L_p(G)^{s/p}}\ge \frac{\sigma_{s,p}}{\left(\mathcal L_p(A)+\epsilon\right)^{s/p}}.
\end{equation}
Letting $\epsilon\downarrow0$, we obtain \eqref{underh_p} for $\mathcal L_p(A)>0$ and also that $h_{s,p}^*(A)=h_{s,p}(A)=\infty$ if $\mathcal L_p(A)=0$ which establishes \eqref{asymptotics} when    $\mathcal L_p(A)=0$. 

Hereafter we assume  $\mathcal L_p(A)>0$.   To prove \eqref{overh_p}, let
\begin{equation}\label{densityset}
A^*:=\left\{x\in A:\ \limsup_{r\to 0^+}\frac{\mathcal L_p(\overline{B(x,r)}\cap A)}{\mathcal L_p(\overline{B(x,r)})}=1\right\}.
\end{equation}
Then by the Lebesgue density theorem there holds $\mathcal L_p(A\setminus A^*)=0$. By an iterative covering argument using Besicovitch's covering theorem, we can find a finite collection of disjoint closed balls $B_i, i\in\{1,\ldots,n\}$ of radii $r_i\in (0,1)$, such that
\begin{subequations}\label{properties_bi}
\begin{equation}\label{bi_1}
\forall i\in\{1,\ldots,n\},\quad \frac{\mathcal L_p(A\cap B_i)}{\mathcal L_p(B_i)}\ge 1-\epsilon,
\end{equation}
\begin{equation}\label{bi_2}
\mathcal L_p\left(\bigcup_{i=1}^n(A\cap B_i)\right) = \sum_{i=1}^n\mathcal L_p(A\cap B_i)\ge (1-\epsilon)\mathcal L_p(A).
\end{equation}
\end{subequations}
Now \eqref{inclusion} together with \eqref{lbound} of Lemma \ref{lem:up_low_bound} gives
\begin{equation}\label{bound_a_1}
\overline{h}_{s,p}^*(A)\le \overline h_{s,p}^*\left(\bigcup_{i=1}^n (A\cap B_i)\right)\le\left(\sum_{i=1}^n\overline h_{s,p}^*(A\cap B_i)^{-p/s}\right)^{-s/p}.
\end{equation}
Due to \eqref{bi_1}, and to the regularity of the Radon measure $\mathcal L_p$, there exist sets $G_i\subset B_i$ such that $\mathcal L_p(\partial G_i)=0$ and $B_i\setminus A\subset G_i$ and $\mathcal L_p(G_i)<2\epsilon\mathcal L_p(B_i)$. Now we use \eqref{ubound} of Lemma \ref{lem:up_low_bound}, with the choices $d\mapsto p$, $A\mapsto A\cap B_i$ and $B\mapsto G_i$, obtaining
\begin{eqnarray}\label{bound_bi}
\overline{h}_{s,p}^*(A\cap B_i)^{-p/s}&\stackrel{\eqref{ubound}}{\ge}& h_{s,p}(B_i)^{-p/s}- h_{s,p}(G_i)^{-p/s}\nonumber\\
&=&\left(\sigma_{s,p}\right)^{-p/s}\left(\mathcal L_p(B_i)-\mathcal L_p(G_i)\right)\ge(1-2\epsilon)\left(\sigma_{s,p}\right)^{-p/s}\mathcal L_p(B_i).
\end{eqnarray}
By \eqref{bi_2}, \eqref{bound_a_1} and \eqref{bound_bi} we obtain
\begin{eqnarray}\label{finalbound}
\overline{h}_{s,p}^*(A)&\le& \frac{\sigma_{s,p}}{(1-2\epsilon)^{s/p}}\left(\sum_{i=1}^N\mathcal L_p(B_i)\right)^{-s/p}\le\frac{\sigma_{s,p}}{(1-2\epsilon)^{s/p}}\left(\sum_{i=1}^N\mathcal L_p(A\cap B_i)\right)^{-s/p}\nonumber\\
&\le&\frac{\sigma_{s,p}}{((1-2\epsilon)(1-\epsilon))^{-s/p}}\mathcal L_p(A)^{-s/p}.
\end{eqnarray}
By taking the limit $\epsilon\downarrow0$ in \eqref{finalbound} we obtain \eqref{overh_p}, as desired.  Combining  \eqref{underh_p} and  \eqref{overh_p}
proves $$
h_{s,p}^*(A)=\frac{\sigma_{s,p}}{\mathcal L_p(A)^{s/p}},
$$
for $s\ge p$.
The fact that the same equality holds for the constrained case  follows from Theorem~\ref{thm:compact_body_old} in the case $s>p$ since $\mathcal L_p(A)>0$.  For $s=p$ the constrained equality is proved in \cite{2014borodachovbosuwan}.\hfill$\qed$
%
%
%
%
%
%
%
\section{A general lower bound via Minkowski content}\label{sec:mink}
%
%
The main result of this section, Proposition \ref{prop:mink_bound}, is the analogue for the case of polarization problems of the rough bound \cite[Lemma 8]{hsw2012} in the setting of energy minimization problems.

We start by recalling the definition of Minkowski content:
\begin{definition}\label{def:mink}
The \emph{upper} and \emph{lower Minkowski contents} of $A$, denoted respectively by $\overline{\mathcal M}_d(A)$, $\underline{\mathcal M}_d(A)$ are respectively defined as
\[
\overline{\mathcal M}_d(A):=\limsup_{r\downarrow 0}\frac{\mathcal L^p(A_r)}{\beta_{p-d}r^{p-d}},\quad \underline{\mathcal M}_d(A):=\liminf_{r\downarrow 0}\frac{\mathcal L^p(A_r)}{\beta_{p-d}r^{p-d}},
\]
where $A_r$ is as defined in \eqref{fatten_a} and $\beta_k>0$ is for $k\in\mathbb N, k\ge 1$ the volume of the $k$-dimensional unit ball
\begin{equation}\label{volkball}
\beta_k:= \frac{\pi^{k/2}}{\Gamma\left(\frac k2 + 1\right)}.
\end{equation}
If $\overline{\mathcal M}_d(A)=\underline{\mathcal M}_d(A)$, their common value is called the \emph{Minkowski content} of $A$, and denoted by $\mathcal M_d(A)$.
\end{definition}
The next proposition is essentially a generalization of \cite[Lemma 8.2]{bhrs2016preprint} and will be used in the proof of Theorem~\ref{thm:rectifiable}
given in Section~\ref{pfThmRect}.
\begin{proposition}\label{prop:mink_bound}
Let $p\ge d\ge 1$ be natural numbers and let $s>d$. Then there exists a constant $\mathcal C_{p,d,s}$ depending only on $p,d,s$ such that the following holds. Let $A\subset \R^p$ be a set such that $\overline{\mathcal M}_d(A)<\infty$. Then
\begin{equation}\label{mink_bound}
  \underline h_{s,d}(A) \ge \frac{\mathcal C_{p,d,s}}{\overline{\mathcal M}_d(A)^{s/d}}.
\end{equation}
\end{proposition}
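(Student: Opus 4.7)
The plan is to reduce the desired polarization lower bound to the corresponding lower bound for minimum Riesz $s$-energy, which is precisely \cite[Lemma~8]{hsw2012}. The paper's chain of inequalities \eqref{bounds} provides the bridge: for $N \ge 2$,
$$\mathcal P_s(A,N) \;\ge\; \frac{\mathcal E_s(A,N+1)}{N-1}.$$
Under the hypotheses $s > d$ and $\overline{\mathcal M}_d(A) < \infty$, the cited energy result gives a constant $\mathcal C_{p,d,s} > 0$ depending only on $p,d,s$ such that
$$\liminf_{N \to \infty} \frac{\mathcal E_s(A,N)}{N^{1+s/d}} \;\ge\; \frac{\mathcal C_{p,d,s}}{\overline{\mathcal M}_d(A)^{s/d}}.$$

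Recalling that $\tau_{s,d}(N) = N^{s/d}$ from \eqref{tau_sd}, combining these two bounds yields
$$\frac{\mathcal P_s(A,N)}{\tau_{s,d}(N)} \;\ge\; \frac{\mathcal E_s(A,N+1)}{(N+1)^{1+s/d}} \cdot \frac{(N+1)^{1+s/d}}{(N-1)\,N^{s/d}}.$$
Since the second factor tends to $1$ as $N \to \infty$, passing to the liminf through both factors gives
$$\underline h_{s,d}(A) \;\ge\; \frac{\mathcal C_{p,d,s}}{\overline{\mathcal M}_d(A)^{s/d}},$$
which is the desired inequality, with the same constant as in the cited energy lemma.

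The only step that requires careful checking is that \cite[Lemma~8]{hsw2012} is stated in the full generality we need here, namely for arbitrary compact $A \subset \mathbb{R}^p$ with $\overline{\mathcal M}_d(A) < \infty$ and no further regularity or rectifiability assumption; this is precisely the content of that lemma. If a self-contained proof were preferred, one could instead mimic the packing-based argument of \cite[Lemma~8]{hsw2012} directly for polarization: select a maximal $r$-separated configuration $\omega_N \subset A$ whose existence for $r$ of order $(\overline{\mathcal M}_d(A)/N)^{1/d}$ follows by volume comparison in the Minkowski neighbourhood $A_{r/2}$, and then estimate $\min_{y\in A}\sum_i |x_i-y|^{-s}$ from below via an annular decomposition about $y$. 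The main obstacle in any such direct approach is controlling the number of configuration points in annuli around a varying point $y \in A$ using only $\overline{\mathcal M}_d(A)$ rather than a stronger regularity hypothesis; the reduction via \eqref{bounds} simply outsources that technical work to \cite{hsw2012}, and is by far the most economical path.
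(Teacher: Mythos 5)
Your argument is correct, but it takes a genuinely different route from the paper's own proof, and the difference matters. You reduce the polarization lower bound to the energy lower bound of \cite[Lemma 8]{hsw2012} via the chain \eqref{bounds}, and the arithmetic with the $(N\pm1)$ factors is handled correctly: the extra factor $(N+1)^{1+s/d}/\bigl((N-1)N^{s/d}\bigr)$ does tend to $1$, so the liminf passes through cleanly. The paper, by contrast, gives a self-contained two-lemma proof (Lemmas~\ref{lem:mink_covers} and~\ref{lem:cover_pol}): Lemma~\ref{lem:mink_covers} turns the Minkowski content bound into a covering estimate, and Lemma~\ref{lem:cover_pol} turns the covering estimate directly into the polarization lower bound, with no appeal to energy at all. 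This is not just a stylistic preference: in Remark~\ref{rmk:mink_1plate} the authors explicitly point out that one of the purposes of Proposition~\ref{prop:mink_bound} is precisely that ``the asymptotic lower bounds in \cite{bhrs2016preprint} now follow without needing to appeal to the energy results of \cite{bhs2008}.'' Your derivation re-imports the energy machinery and thereby forfeits the structural advantage the paper is aiming for, even though it proves the inequality. Also note a small point of hygiene: Proposition~\ref{prop:mink_bound} as stated only assumes $\overline{\mathcal M}_d(A)<\infty$, whereas the chain \eqref{bounds} that you invoke requires $A$ compact; since Definition~\ref{def:mink} requires $A$ closed, and a closed $A$ with $\overline{\mathcal M}_d(A)<\infty$ and $d\le p$ is bounded (else $A_r$ would have infinite Lebesgue measure), this is harmless, but it is worth saying explicitly rather than leaving it implicit.
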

The above proposition follows from Lemmas \ref{lem:mink_covers} and \ref{lem:cover_pol} below. Lemma \ref{lem:mink_covers} says that Minkowski content controls best-covering at all scales, and this will enable us to bound the polarization constant from below by covering constants in Lemma \ref{lem:cover_pol}.
\begin{lemma}\label{lem:mink_covers}
Let $p\ge d\ge 1$ be natural numbers. Then there exists a constant $\mathcal C_{p,d}>0$ depending only on $p,d$ such that for any set $A\subset\R^p$ with $0<\overline{\mathcal M}_d(A)<\infty$, for all $r>0$ sufficiently small there exists $W_r\subset\R^p$ such that
\[
A\subset \bigcup_{x\in W_r}B(x,r)
\]
and
\begin{equation}\label{mink_covers}
\# W_r\le \mathcal C_{p,d}\frac{\overline{\mathcal M}_d(A)}{r^d}.
\end{equation}
If $A\subset\R^p$ is such that $\overline{\mathcal M}_d(A)=0$, then for any $\epsilon>0$ there is some $r_0>0$ such that for all $r\in(0,r_0)$,
\begin{equation}\label{mink_covers0} 
\# W_r\le  \frac{\epsilon}{r^d}.
\end{equation}
\end{lemma}
\begin{lemma}\label{lem:cover_pol}
Let $p,d,s,A$ be fixed as in Proposition~\ref{prop:mink_bound}. If there exists $C>0$ such that for every sufficiently small $r>0$ there exists a covering of $A$ by balls of radius $r$ of cardinality at most $C/r^d$, then
\begin{equation}\label{cover_pol}
\underline h_{s,d} (A)\ge \left(\widetilde{\mathcal C}_{p,d}C\right)^{-s/d},
\end{equation}
where $\widetilde{\mathcal C}_{p,d}>0$ is a constant depending only on $p,d$.
\end{lemma}
\begin{proof}[Proof of Proposition~\ref{prop:mink_bound}:]
  By Lemma~\ref{lem:mink_covers}, the hypotheses of Lemma~\ref{lem:cover_pol} hold for the choice $C=\mathcal C_{p,d}\overline{\mathcal M}_d(A)$ when $\overline{\mathcal M}_d(A)>0$ and for any $C>0$ when $\overline{\mathcal M}_d(A)=0$, where $\mathcal C_{p,d}>0$ is the constant from Lemma~\ref{lem:mink_covers}. Then the inequality \eqref{cover_pol} directly gives \eqref{mink_bound} for the choice
\begin{equation}
\mathcal C_{p,d,s}:=\left(\widetilde{\mathcal C}_{p,d}\mathcal C_{p,d}\right)^{-s/d}.
\end{equation}
\end{proof}
We now provide the proofs for the above lemmas.
\begin{proof}[Proof of Lemma~\ref{lem:mink_covers}:]
Let $\epsilon>0$ be as in the statement of the lemma and let $\tilde{\epsilon}>0$ be a constant which will be fixed below depending only on $A, p, d, \epsilon$. If $r>0$ is small enough (depending on $A$ and $\tilde\epsilon$), there holds
\begin{equation}\label{neigh_vol_bd}
\frac{\mathcal L_p(A_{2r})}{2^{p-d}\beta_{p-d}r^{p-d}}\le  \overline{\mathcal M}_d(A)+\tilde{\epsilon}.
\end{equation}
There exists $N_{\mathrm{Cov},p}\in\mathbb N$, depending only on $p$ such that for any $r>0$, there are points $x_1,\ldots, x_{N_{\mathrm{Cov},p}}\in[0,2r)^p$ such that the open $r$-balls with centers in $\{x_j+y:\ y\in(2r\mathbb Z)^p, 1\le j\le N_{\mathrm{Cov},p}\}$ cover $\mathbb R^p$. 
Then, for each $j\in\{1,\ldots,N_{\mathrm{Cov},p}\}$, the $r$-balls with centers in $W_r^j:=A_r\cap (x_j+(2r\mathbb Z)^p)$ are disjoint and the set $W_r:=\bigcup_{j=1}^{N_{\mathrm{Cov},p}}W_r^j$ satisfies
\begin{equation}\label{balls_in_nbd_A}
A\subset\bigcup_{x\in W_r}B(x,r)\subset A_{2 r}.
\end{equation}
Due to \eqref{balls_in_nbd_A} and \eqref{neigh_vol_bd}, for $j=1,\ldots,N_{\mathrm{Cov},p}$, we have
\[
\#\left( W_r^j\right)\cdot \beta_pr^p=\left|\bigcup_{x\in W_r^j}B(x,r)\right|\le\left|A_{2r}\right|\le2^{p-d}\beta_{p-d}r^{p-d}(\overline{\mathcal M}_d(A)+\tilde{\epsilon}),
\]
where $|\cdot|$ denotes the $\mathcal L_p$-measure of a set. By summing over $j$ we obtain
\[
\# W_r\le N_{\mathrm{Cov},p}\frac{\beta_{p-d}2^{p-d}}{\beta_p}\frac{\overline{\mathcal M}_d(A)+\tilde{\epsilon}}{r^d}.\]
If $\overline{\mathcal M}_d(A)>0$, then choosing $\tilde{\epsilon}=\overline{\mathcal M}_d(A)$ shows that \eqref{mink_covers} holds with
$ \mathcal C_{p,d}:=\frac{\beta_{p-d}2^{p-d+1}}{\beta_p}$.  If $\overline{\mathcal M}_d(A)=0$, then choosing $\tilde{\epsilon}=\epsilon/\mathcal C_{p,d}$ proves \eqref{mink_covers0}.
\end{proof}
\begin{proof}[Proof of Lemma~\ref{lem:cover_pol}:]
Let $\mathcal{B}=\{B(x_i,r):i=1,\ldots,N_r\}$ be a minimum-cardinality covering of $A$ by $r$-balls and for each $i=1,\ldots,M$, choose $\tilde x_i\in A\cap B(x_i,r)$. Setting $W_r=\{\tilde x_i: i=1,\ldots, N_r\}$, we have %
\begin{equation}\label{cardomega}
N_r=\# W_r\le \frac{C}{r^d},
\end{equation}
due to the hypothesis of the lemma. Since for each point in $A$ there exists a point in $W_r$ at distance at most $2r$ from $A$, we have
\begin{equation}\label{pol_bound1}
\mathcal P_s(A,N_r)\ge P_s(A, W_r)\ge (2r)^{-s}.
\end{equation}
We set $N_{\mathrm{Cov},p}$ to be the minimum number of balls of radius $1$ in $\mathbb R^p$ required to cover a ball of radius $2$. Then we have, for all $r>0$,
\[
N_r\le N_{r/2}\le N_{\mathrm{Cov},p}N_r.
\]
Thus by \eqref{cardomega}, for fixed $N$ there exists $r=r(N)>0$ such that
\begin{equation}\label{bound_nr1}
N_r\le N< N_{r/2}\le N_{\mathrm{Cov},p}N_r.
\end{equation}
Then we have
\[
\mathcal P_s(A,N)
\stackrel{\text{\eqref{bound_nr1}}}{\ge}\mathcal P_s(A,N_r)
\stackrel{\text{\eqref{pol_bound1}}}{\ge} (2r)^{-s}
\stackrel{\text{\eqref{cardomega}}}{\ge} \left(\frac{N_r}{2^dC}\right)^{s/d}
\stackrel{\text{\eqref{bound_nr1}}}{\ge}\frac{N^{s/d}}{(2^dN_{\mathrm{Cov},p}C)^{s/d}}
\stackrel{\text{\eqref{tau_sd}}}{=}\frac{\tau_{s,d}(N)}{(2^dN_{\mathrm{Cov},p}C)^{s/d}},
\]
where we have also used the fact that the polarization value is increasing in $N$ for the first inequality. Now by reordering the terms and by passing to the limit in $N$ along a subsequence that realizes the value of $\underline h_{s,d}(A)$, the bound \eqref{cover_pol} follows if we set $\widetilde{\mathcal C}_{p,d}:=2^dN_{\mathrm{Cov},p}$.
\end{proof}
\begin{rmk}\label{rmk:mink_1plate}   Of course, Proposition~\ref{prop:mink_bound} also provides a lower bound for $\underline{h}_{s,d}^*(A)$ since this quantity is at least as large as its constrained analog. 
 Thanks to Proposition~\ref{prop:mink_bound}  the asymptotic lower bounds in \cite{bhrs2016preprint}  now follow without needing to appeal to  the energy   results of \cite{bhs2008}. \end{rmk}
%
%
%
%
\section{Proof of Theorem \ref{thm:rectifiable}}\label{sec:pf_rectifiable}
%
%
\subsection{Some geometric measure theory tools}
%
%
We first quantify the increase of interpoint distances under projection on $L$-Lipschitz graphs:
\begin{lemma}\label{lem:proj}
For $p\in \N$ and $\epsilon>0$, let $\mathcal G$   be  a $d$-dimensional graph in $\R^p$ of an $\epsilon$-Lipschitz function $\psi:H\to H^\perp$ over a $d$-dimensional subspace $H\subset\mathbb R^p$  having orthogonal complement $H^\perp$; i.e, ${\mathcal G}:=\{h+\psi(h):\, h \in H\}$. If $\pi_H:\mathbb R^p\to H$ is the orthogonal projection onto $H$ and $C_\epsilon:=\sqrt{1+\epsilon^2}$, then for any $x\in\mathbb R^p$ and any $y\in {\mathcal G}$,
\begin{equation}\label{proj_error_L}
 |\pi_H(x)+\psi(\pi_H(x))-y|\le C_\epsilon  |x-y|.
\end{equation}
%
%
\end{lemma}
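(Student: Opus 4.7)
The plan is to decompose every relevant vector according to the orthogonal splitting $\mathbb R^p=H\oplus H^\perp$ and exploit the Lipschitz bound on $\psi$ together with the Pythagorean identity.

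First, I would write $x=x_H+x_\perp$ with $x_H=\pi_H(x)\in H$ and $x_\perp\in H^\perp$, and parametrize the point $y\in\mathcal G$ as $y=y_H+\psi(y_H)$ for a unique $y_H\in H$. Set $\tilde x:=\pi_H(x)+\psi(\pi_H(x))=x_H+\psi(x_H)\in\mathcal G$, which is precisely the left-hand quantity appearing inside $|\cdot|$ in \eqref{proj_error_L}.

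Next I would compute $\tilde x-y=(x_H-y_H)+(\psi(x_H)-\psi(y_H))$, which is an orthogonal decomposition since the first summand lies in $H$ and the second in $H^\perp$. By the Pythagorean identity and the $\epsilon$-Lipschitz property of $\psi$,
\begin{equation}
|\tilde x-y|^2=|x_H-y_H|^2+|\psi(x_H)-\psi(y_H)|^2\le (1+\epsilon^2)\,|x_H-y_H|^2.
\end{equation}
On the other hand, the orthogonal decomposition $x-y=(x_H-y_H)+(x_\perp-\psi(y_H))$ yields
\begin{equation}
|x-y|^2=|x_H-y_H|^2+|x_\perp-\psi(y_H)|^2\ge |x_H-y_H|^2.
\end{equation}
Combining the two displayed inequalities gives $|\tilde x-y|\le\sqrt{1+\epsilon^2}\,|x-y|=C_\epsilon|x-y|$, which is exactly \eqref{proj_error_L}.

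There is no real obstacle here: the proof is essentially an orthogonality computation. The only point worth being careful about is that the estimate $|x-y|\ge|x_H-y_H|$ uses no property of $x_\perp$, so the result holds for arbitrary $x\in\mathbb R^p$, not merely for $x$ close to $\mathcal G$; this will be needed later when the lemma is invoked on configurations that may be placed off the graph.
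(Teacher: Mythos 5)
Your proof is correct and follows essentially the same route as the paper: decompose along $H\oplus H^\perp$, use the Pythagorean identity together with the $\epsilon$-Lipschitz bound on $\psi$, and conclude via $|\pi_H(x)-\pi_H(y)|\le|x-y|$. The only difference is that you spell out the last step (non-expansiveness of the orthogonal projection) explicitly, whereas the paper leaves it implicit.
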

\begin{proof}
For  $x\in \R^p$, let $x':=\pi_H(x)$, $x'':=x-x'$, and note that $x''\in H^\perp$.   If $y\in {\mathcal G}$, then $y''=\psi(y')$ and we have
\begin{equation*}
|(x'+\psi(x'))-y|^2=|x'-y'|^2+|\psi(x')-\psi(y')|^2 \le (1+\epsilon^2)|x'-y'|^2\le  (1+\epsilon^2)|x-y|^2,
\end{equation*}
which proves the lemma.
\end{proof}

  Lemma~\ref{lem:proj} directly implies the following rough bound for unconstrained polarization for Lipschitz graphs:
\begin{corollary}\label{coro:e_lip_graphs}
Under the hypotheses of Lemma~\ref{lem:proj}, if $\widetilde{K}$ is a compact subset  of $\mathcal G$ and $N\in\mathbb N$, then
\begin{equation}\label{pol_lip_graphs_1}
\mathcal P_s(\widetilde{K},{\mathcal G},N) \le \mathcal P_s^*(\widetilde{K},N)\le C_\epsilon^s \mathcal P_s(\widetilde{K},{\mathcal G},N).
\end{equation}
\end{corollary}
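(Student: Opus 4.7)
The plan is to derive both inequalities essentially as immediate consequences of the monotonicity property \eqref{ineq_pol} and Lemma~\ref{lem:proj}, with the projection $\Pi:\R^p\to \mathcal{G}$ defined by $\Pi(x):=\pi_H(x)+\psi(\pi_H(x))$ playing the central role. No delicate compactness or limiting arguments are required.

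For the left inequality, I would simply invoke \eqref{ineq_pol} with $B'=\mathcal{G}\subset\R^p=B$, giving $\mathcal P_s(\widetilde K,\mathcal{G},N)\le \mathcal P_s(\widetilde K,\R^p,N)=\mathcal P_s^*(\widetilde K,N)$ at once.

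For the right inequality, I would take an arbitrary $N$-point multiset $\omega_N=\{x_1,\ldots,x_N\}\subset\R^p$ and form its image $\widetilde\omega_N:=\{\Pi(x_1),\ldots,\Pi(x_N)\}\subset\mathcal{G}$. Since $\widetilde K\subset\mathcal{G}$, Lemma~\ref{lem:proj} applies to every pair $(x_i,y)$ with $y\in\widetilde K$, yielding $|\Pi(x_i)-y|\le C_\epsilon|x_i-y|$ and therefore $|\Pi(x_i)-y|^{-s}\ge C_\epsilon^{-s}|x_i-y|^{-s}$ for each $i$. Summing over $i$ and taking the infimum over $y\in\widetilde K$ (noting that the infimum of a sum of pointwise inequalities preserves the direction) gives
\[
P_s(\widetilde K,\widetilde\omega_N)\ge C_\epsilon^{-s}\,P_s(\widetilde K,\omega_N).
\]
Since $\widetilde\omega_N\subset\mathcal{G}$, the left-hand side is bounded above by $\mathcal P_s(\widetilde K,\mathcal{G},N)$, and taking the supremum of the right-hand side over all $\omega_N\subset\R^p$ yields $C_\epsilon^{-s}\mathcal P_s^*(\widetilde K,N)\le \mathcal P_s(\widetilde K,\mathcal{G},N)$, which is the claimed bound after rearranging.

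The only point that deserves care is the handling of multisets: the map $\Pi$ need not be injective, but by the convention introduced with \eqref{2plate_polarization} the configuration $\widetilde\omega_N$ is a multiset with repetitions counted, so its cardinality remains $N$ and the sum $\sum_{i=1}^N|\Pi(x_i)-y|^{-s}$ is computed with multiplicities. No serious obstacle arises; the argument is essentially a one-line consequence of Lemma~\ref{lem:proj} once the projection is chosen correctly.
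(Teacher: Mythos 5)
Your proof is correct and is precisely the argument the paper has in mind; the paper simply states that Corollary~\ref{coro:e_lip_graphs} ``directly'' follows from Lemma~\ref{lem:proj} without spelling out the details, and your write-up fills in exactly those details (monotonicity \eqref{ineq_pol} for the left inequality, and projection of an arbitrary competitor onto $\mathcal G$ followed by the pointwise bound $|\Pi(x_i)-y|^{-s}\ge C_\epsilon^{-s}|x_i-y|^{-s}$ for the right inequality). Your remark about multisets is a reasonable sanity check but, as you note, raises no real issue.
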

We also state the following simple deformation result without proof.
\begin{lemma}\label{lem:lip_area}
If   $\Phi:\mathbb R^m\to \mathbb R^n$ is an $(1+\epsilon)$-biLipschitz map for some $\epsilon>0$,  $K\subset \mathbb R^m$   a compact set,  $\omega\subset \mathbb R^m$   a finite set,
and $s>0$, then
\begin{subequations}\label{lip_area}
\begin{equation}\label{lip_area_1}
(1+\epsilon)^{-s} P_s(K,\omega) \le P_s(\Phi(K),\Phi(\omega))\le (1+\epsilon)^{s} P_s(K,\omega),
\end{equation}
and
\begin{equation}\label{lip_area_2}
 (1+\epsilon)^{-d}\mathcal H_d( K)\le  \mathcal H_d(\Phi(K))\le (1+\epsilon)^d\mathcal H_d( K).
\end{equation}
\end{subequations}
\end{lemma}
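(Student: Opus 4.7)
The plan is to derive both inequalities directly from the defining properties of a $(1+\epsilon)$-biLipschitz map, namely that
\[
(1+\epsilon)^{-1}|x-y|\le |\Phi(x)-\Phi(y)|\le (1+\epsilon)|x-y|\quad\mbox{for all }x,y\in\mathbb R^m.
\]
In particular $\Phi$ is injective, so it restricts to a bijection $K\leftrightarrow \Phi(K)$, which is the only structural fact needed beyond the two inequalities above.

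For \eqref{lip_area_1}, I would parametrize points of $\Phi(K)$ as $\Phi(y)$ with $y\in K$. Since $s>0$, the map $t\mapsto t^{-s}$ on $(0,\infty)$ is decreasing, so the biLipschitz bounds translate termwise into
\[
(1+\epsilon)^{-s}|x-y|^{-s}\le |\Phi(x)-\Phi(y)|^{-s}\le (1+\epsilon)^{s}|x-y|^{-s}.
\]
Summing over $x\in\omega$ and applying $\inf_{y\in K}$ on both sides (equivalently, $\inf_{\Phi(y)\in \Phi(K)}$ via the bijection) yields exactly \eqref{lip_area_1}, recalling the definition $P_s(K,\omega)=\inf_{y\in K}\sum_{x\in\omega}|x-y|^{-s}$ from \eqref{2plate_polarization}.

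For \eqref{lip_area_2}, I would invoke the standard measure-theoretic fact (cf.\ \cite{federer}) that if $f:\mathbb R^m\to \mathbb R^n$ is $L$-Lipschitz and $A\subset \mathbb R^m$, then $\mathcal H_d(f(A))\le L^d\mathcal H_d(A)$. Applied to $\Phi$ itself with $L=1+\epsilon$, this gives the upper bound $\mathcal H_d(\Phi(K))\le (1+\epsilon)^d\mathcal H_d(K)$. The lower bound comes from applying the same principle to the Lipschitz inverse $\Phi^{-1}:\Phi(K)\to K$ (which is $(1+\epsilon)$-Lipschitz by the left-hand biLipschitz inequality), giving $\mathcal H_d(K)=\mathcal H_d(\Phi^{-1}(\Phi(K)))\le (1+\epsilon)^d\mathcal H_d(\Phi(K))$.

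There is essentially no obstacle here: the only subtlety is keeping track of which direction of the biLipschitz inequality is used to produce the upper versus the lower bound, and remembering that the negative exponent $-s$ in the Riesz kernel reverses the inequality direction (while the positive exponent $d$ in the Hausdorff bound does not). This is exactly why both inequalities in \eqref{lip_area_1} and \eqref{lip_area_2} share the same symmetric form around the neutral value $(1+\epsilon)^0=1$.
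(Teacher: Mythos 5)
Your proof is correct. The paper explicitly states this lemma ``without proof'' (describing it as a simple deformation result), so there is no argument in the paper to compare against; your argument --- termwise transfer of the biLipschitz bounds through the decreasing map $t\mapsto t^{-s}$ plus the monotonicity of the infimum for \eqref{lip_area_1}, and the standard $L$-Lipschitz Hausdorff-measure estimate applied to both $\Phi$ and $\Phi^{-1}$ for \eqref{lip_area_2} --- is the straightforward, expected proof that the authors evidently had in mind.
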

%
%
%
\subsection{Proof of Theorem~\ref{thm:rectifiable}}\label{pfThmRect}
%
%
We recall our definition of $A$ being strongly $(\mathcal H_d,d)$-rectifiable: for any $\epsilon>0$ and for $k\in \mathbb N$ large enough depending on $\epsilon$ we may write $A$ as
\begin{equation}\label{decomp_lip1}
A=R_\epsilon\cup\bigcup_{j=1}^k \widetilde K_j,\quad\mbox{where}\quad\left\{\begin{array}{l}\widetilde K_j\subset\mathbb R^p\mbox{ are compact and disjoint,} \\ \widetilde K_j\mbox{ are contained in }d\mbox{-dimensional }\epsilon\mbox{-Lipschitz graphs,}\\ \overline{\mathcal M}_d(R_\epsilon)<\epsilon.\end{array}\right.
\end{equation}
More explicitly, for each $j=1,\ldots,k$ there is a $d$-dimensional subspace $H_j\subset \mathbb R^p$ and an $\epsilon$-Lipschitz map $\psi_j:H_j\to H_j^\perp$ such that $\widetilde K_j$ is included in the graph $\mathcal G_j$ of $\psi_j$.  For each $j=1,\ldots,k$, let $\iota_j:\mathbb R^d\to H_j$ be an isometry.  As mentioned just before Definition~\ref{sdrectDef}, the mapping $ \varphi_j:\R^d\to\mathcal G_j$ defined by $ \varphi_j(x):=\iota_j(x)+\psi_j(\iota_j(x))$  is then $(1+\epsilon)$-biLipschitz for every $j$ with $\widetilde K_j=\varphi_j(K_j)$, where $K_j\subset \mathbb R^d$ is compact.
%

\medskip

Let $A\subset\R^d$ be strongly $(\mathcal H_d,d)$-rectifiable.   We shall prove separately the inequalities
\begin{subequations}\label{cptbody_pf}
\begin{equation}\label{underh_d}
 \underline{h}_{s,d}(A)\ge\frac{\sigma_{s,d}}{\mathcal H_d(A)^{s/d}},
\end{equation}
\begin{equation}\label{overh_d}
\overline{h}_{s,d}^*(A)\le\frac{\sigma_{s,d}}{\mathcal H_d(A)^{s/d}},
\end{equation}
\end{subequations}
which, since $\underline{h}_{s,d}(A)\le \underline{h}_{s,d}^*(A)$ and $\overline{h}_{s,d}(A)\le  \overline{h}_{s,d}^*(A)$, imply  \eqref{thm_rect_eq}.

%
\medskip

We shall show \eqref{underh_d} using the decomposition \eqref{decomp_lip1}.  By \eqref{mink_bound} of Proposition \ref{prop:mink_bound}, we have
\begin{equation}\label{lb_error}
\underline h_{s,d} (R_\epsilon)\ge \frac{\mathcal C_{p,d,s}}{\overline{\mathcal M}_d(R_\epsilon)^{s/d}}\ge \frac{\mathcal C_{p,d,s}}{\epsilon^{s/d}}.
\end{equation}
By using \eqref{ubounda} from Lemma \ref{lem:up_low_bound} and the $(1+\epsilon)$-biLipschitz parameterizations of $\widetilde K_j=\varphi_j(K_j)$, we find that
\begin{eqnarray}\label{chain_lb}
\underline h_{s,d}(A)^{-d/s}&\stackrel{\text{\eqref{decomp_lip1}, \eqref{ubounda}}}{\le}&\underline h_{s,d}(R_\epsilon)^{-d/s}+\sum_{j=1}^k\underline h_{s,d}(\varphi_j(K_j))^{-d/s}\nonumber\\
&\stackrel{\text{\eqref{lb_error}, \eqref{ineq_pol}}}{\le}&\mathcal C_{p,d,s}^{-d/s}\epsilon +\sum_{j=1}^k\left[\liminf_{N\to\infty} \frac{\mathcal P_s(\varphi_j(K_j),N)}{\tau_{s,d}(N)}\right]^{-d/s}\nonumber\\
&\stackrel{\text{\eqref{lip_area_1}}}{\le}&\mathcal C_{p,d,s}^{-d/s}\epsilon +(1+\epsilon)^d\sum_{j=1}^k\left[\liminf_{N\to\infty}\frac{\mathcal P_s(K_j,N)}{\tau_{s,d}(N)}\right]^{-d/s}\nonumber\\
&\le&\mathcal C_{p,d,s}^{-d/s}\epsilon +(1+\epsilon)^d\sum_{j=1}^k\underline h_{s,d}(K_j)^{-d/s}\nonumber\\
&\stackrel{\text{Thm.\,\ref{thm:compact_body} (ii)}}{=}&\mathcal C_{p,d,s}^{-d/s}\epsilon + (1+\epsilon)^d(\sigma_{s,d})^{-d/s}\sum_{j=1}^k\mathcal H_d(K_j)\nonumber\\
&\stackrel{\text{\eqref{lip_area_2}}}{\le}&\mathcal C_{p,d,s}^{-d/s}\epsilon + (1+\epsilon)^{2d}(\sigma_{s,d})^{-d/s}\sum_{j=1}^k\mathcal H_d(\varphi_j(K_j)).
\end{eqnarray}
 Since $\sum_{j=1}^k\mathcal H_d(\varphi_j(K_j))\le \mathcal{H}_d(A)$, taking the limit as $\epsilon\downarrow 0$ in \eqref{chain_lb} yields the bound \eqref{underh_d}.  Note that this shows $\underline h_{s,d}(A)=+\infty$ if $\mathcal H_d(A)=0$. 
 

To prove \eqref{overh_d}, we use the decomposition \eqref{decomp_lip1}, the bound \eqref{inclusion} and the bound \eqref{lbound} of Lemma~\ref{lem:up_low_bound}, and we obtain
\begin{eqnarray}\label{ubound_d}
\overline h_{s,d}^*(A)^{-d/s}&\stackrel{\text{\eqref{inclusion}}}{\ge}& \left[\overline h_{s,d}^*\left(\bigcup_{j=1}^k\varphi_j(K_j)\right)\right]^{-d/s}\stackrel{\text{\eqref{lbound}}}{\ge} \sum_{j=1}^k\overline h_{s,d}^*(\varphi_j(K_j))^{-d/s}\nonumber\\
&\stackrel{\text{\eqref{pol_lip_graphs_1}}}{\ge}&C_\epsilon^{-d}\sum_{j=1}^k\left[\limsup_{N\to\infty} \frac{\mathcal P_s(\varphi_j(K_j),\varphi_j(\R^d),N)}{\tau_{s,d}(N)}\right]^{-d/s} \nonumber\\
&\stackrel{\text{\eqref{lip_area_1}}}{\ge}&C_\epsilon^{-d}(1+\epsilon)^{-d}\sum_{j=1}^k\overline h_{s,d}^*(K_j)^{-d/s}\stackrel{\text{Thm.\,\ref{thm:compact_body} (ii)}}{\ge}(\sigma_{s,d})^{-d/s} C_\epsilon^{-d}(1+\epsilon)^{-d}\sum_{j=1}^k\mathcal H_d(K_j)\nonumber\\
&\stackrel{\text{\eqref{lip_area_2}}}{\ge}&(\sigma_{s,d})^{-d/s} C_\epsilon^{-d}(1+\epsilon)^{-2d}\sum_{j=1}^k\mathcal H_d(\varphi(K_j)) \nonumber  \\&\stackrel{\text{\eqref{decomp_lip1}}}{\ge}&(\sigma_{s,d})^{-d/s} C_\epsilon^{-d}(1+\epsilon)^{-2d}(\mathcal H_d(A)-\epsilon).
\end{eqnarray}
Since   $\lim_{\epsilon\downarrow 0}C_\epsilon=1$, taking   $\epsilon\downarrow 0$ in \eqref{ubound_d} we find the desired bound \eqref{overh_d}.

Finally, suppose that $\mathcal H_d(A)>0$  and $ \{\omega_N\}_{N\in \mathcal{N}}$ satisfies
\begin{equation}\label{Aextseq}
\lim\limits_{N\to \infty \atop N\in \mathcal N}{\frac {P_s(A,\W\omega_N)}{N^{s/d}}}= {h}_{s,d}^*(A)=\sigma_{s,d}(\mathcal{H}_d(A))^{-s/d}.
\end{equation}
For $N\in \mathcal{N}$, let  $\nu_N:=\nu(\widetilde \omega_N)$ denote the normalized counting measure associated with $\widetilde \omega_N$ and let $\mu_A$ denote the measure $\frac{\mathcal H_d|_A}{\mathcal H_d(A)}$.   
Let  $G\subset \R^p$ be open.  For $\delta>0$, let $B$  be  a closed subset of  $A\cap G$ such that $\mu_A(B)\ge (1-\delta)\mu_A(G)$.  Since $A$ is compact, there is some $\epsilon>0$ such that $B_\epsilon\subset G$.  Since $A$ is strongly $(\mathcal H_d,d)$-rectifiable, $B$ is also strongly $(\mathcal H_d,d)$-rectifiable and so 
${h}_{s,d}^*(B)=\sigma_{s,d}(\mathcal{H}_d(B))^{-s/d}$.  Using   \eqref{NBepsilon} gives
$$
\liminf\limits_{N\to \infty \atop N\in \mathcal N}\nu_N(G)\ge\liminf\limits_{N\to \infty \atop N\in \mathcal N} \frac{\#(\widetilde\omega_N\cap B_\epsilon)}{N}\ge 
   \left(\frac{  h_{s,d}^*(A)}{  h_{s,d}^*(B)}\right)^{d/s} =\mu_A(B)\ge (1-\delta)\mu_A(G),
$$ 
and since $\delta>0$ is arbitrary, 
$$
\liminf\limits_{N\to \infty \atop N\in \mathcal N}\nu_N(G)\ge\mu_A(G).
$$ 
The Portmanteau Theorem (e.g., see \cite{Bill1999})  then implies that $\nu_N$ converges in the weak* topology to $\mu_A$.   \\
\hfill$\qed$

\medskip

We conclude this section by showing that compact subsets of $C^1$-embedded manifolds are strongly $(\mathcal H_d,d)$-rectifiable:
\begin{lemma}\label{C1case}
 Let $M\subset \mathbb R^p$ be a $C^1$-embedded submanifold of dimension $d$ and let $A\subset M$ be a compact set. Then $A$ is strongly $(\mathcal H_d,d)$-rectifiable.
\end{lemma}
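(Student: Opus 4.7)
The plan is to combine the local-graph structure of $M$ with a tiling of $\mathbb R^p$ by small cubes, choosing the grid position generically so that the interfaces between pieces meet $A$ in an $\mathcal H_d$-null set, and then appealing to Federer's identification of Minkowski content with Hausdorff measure on rectifiable sets.

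First, since $M$ is $C^1$-embedded, for each $x\in A$ there is an open neighborhood $U_x\subset\mathbb R^p$ such that $M\cap U_x$ is the graph of a $C^1$ function $\psi_x$ over the tangent plane $T_xM\subset\mathbb R^p$; since $D\psi_x(x)=0$ and $D\psi_x$ is continuous, after shrinking $U_x$ the graph is $\epsilon$-Lipschitz. Extracting a finite subcover $U_1,\ldots,U_n$ of the compact set $A$ yields a Lebesgue number $\rho>0$ with the property that every subset of $A$ of diameter at most $\rho$ is contained in some $U_j$.

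Next, tile $\mathbb R^p$ with a translated grid $\mathcal Q$ of axis-aligned cubes of side length $\rho/\sqrt{p}$ (so each cube has diameter at most $\rho$). Because $\mathcal H_d(A)<\infty$, the coarea formula implies that for each coordinate direction $i$ the set $\{t\in\mathbb R:\mathcal H_d(A\cap\{x_i=t\})>0\}$ has Lebesgue measure zero, so a generic translation of $\mathcal Q$ ensures $\mathcal H_d(A\cap G)=0$, where $G:=\bigcup_l\partial Q_l$ and $Q_1,\ldots,Q_k$ are the finitely many cubes of $\mathcal Q$ meeting $A$. By the Lebesgue-number property, each $A\cap Q_l$ lies in some $U_{j(l)}$, hence in the associated $\epsilon$-Lipschitz graph $\mathcal G_{j(l)}$. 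For $\delta>0$ set
\[
\widetilde K_l := A\cap\{x\in Q_l:\op{dist}(x,\partial Q_l)\ge\delta\}, \qquad R_\epsilon := A\cap\{x\in\mathbb R^p:\op{dist}(x,G)\le\delta\}.
\]
The $\widetilde K_l$ are compact, pairwise disjoint (they lie in pairwise disjoint $\delta$-shrunken cubes), each contained in $\mathcal G_{j(l)}$, and they satisfy $A=R_\epsilon\cup\bigcup_l\widetilde K_l$ with $R_\epsilon$ compact.

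It remains to make $\mathcal M_d(R_\epsilon)$ small. As a closed subset of the $d$-rectifiable set $M$, the set $R_\epsilon$ is itself closed and $d$-rectifiable, so Federer's theorem (\cite[Thm.\,3.2.39]{federer}) yields $\mathcal M_d(R_\epsilon)=\mathcal H_d(R_\epsilon)$. As $\delta\downarrow 0$ the sets $R_\epsilon$ decrease to $A\cap G$, so continuity of $\mathcal H_d$ from above (valid since $\mathcal H_d(A)<\infty$) gives $\mathcal H_d(R_\epsilon)\to\mathcal H_d(A\cap G)=0$, and choosing $\delta$ small enough achieves $\mathcal M_d(R_\epsilon)<\epsilon$. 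The chief obstacle is constructing genuinely disjoint compact pieces whose residual set has small \emph{Minkowski} content rather than merely small Hausdorff measure; this is addressed by the generic-translation trick combined with Federer's identification $\mathcal M_d=\mathcal H_d$ on closed rectifiable sets.
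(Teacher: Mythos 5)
Your proof is correct, and it follows a genuinely different route from the paper's. The paper also begins from the local $\epsilon$-Lipschitz-graph structure of $M$, but its decomposition uses a finite cover of $A$ by closed balls $\overline{B(x_i,\rho)}$, shrinking each to $\overline{B(x_i,(1-\epsilon_1)\rho)}$ and peeling off earlier balls to obtain disjoint compact pieces; the residual $R_k$ is then a union of spherical annuli intersected with $A$, and its $\mathcal H_d$-measure is bounded \emph{directly} via the biLipschitz area estimate \eqref{lip_area_2}, giving $\mathcal H_d(R_k)\le k_0(1+\epsilon)^d\rho^d|B_1|\bigl(1-(1-\epsilon_1)^d\bigr)\to0$ as $\epsilon_1\to0$. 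You instead tile $\mathbb R^p$ by small cubes, choose a generic translation of the grid so that the interfaces meet $A$ in an $\mathcal H_d$-null set, shrink the cubes by $\delta$, and invoke continuity of $\mathcal H_d$ from above to make $\mathcal H_d(R_\epsilon)\to\mathcal H_d(A\cap G)=0$. Both proofs use Federer's identification $\mathcal M_d=\mathcal H_d$ on closed rectifiable sets to transfer the bound to Minkowski content; the trade-off is that your genericity argument replaces the paper's explicit quantitative annulus estimate with a softer measure-theoretic limit. Two small remarks: (a) the appeal to the coarea formula to show that $\{t:\mathcal H_d(A\cap\{x_i=t\})>0\}$ is Lebesgue-null is heavier than necessary --- since $\mathcal H_d|_A$ is a finite measure and the slices in a fixed direction are pairwise disjoint, at most countably many can carry positive mass, which already suffices; and (b) as the paper also does implicitly, one should extend the locally defined $C^1$-graph function to a globally defined $\epsilon$-Lipschitz map $\psi:H\to H^{\perp}$ (standard Lipschitz extension with the same constant) so that the containing graph $\mathcal G_{j(l)}$ meets the global requirement of Definition~\ref{sdrectDef}; neither is a gap.
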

\begin{proof}
As $M$ is a $C^1$-embedded submanifold, for each $\epsilon>0$ there exists a radius $\rho=\rho(\epsilon)>0$ such that for every $x\in A$ the intersection $M\cap\overline{B(x,\rho)}$ is an $\epsilon$-Lipschitz graph over the tangent subspace $T_xM$ of $M$ at $x$. As $A$ is compact, we can find a cover by balls $B (x_i,\rho)$, $i=1,\ldots,k_0$ with $x_i\in M$. We will introduce a small parameter $\epsilon_1\in(0,1)$ to be appropriately restricted later. We define the sets
\[
\widetilde K_1:=A\cap\overline{B(x_1,(1-\epsilon_1)\rho)}\quad\mbox{and}\quad\widetilde K_{k+1}:=\left(A\cap\overline{B(x_{k+1},(1-\epsilon_1)\rho)}\right)\setminus\bigcup_{j=1}^kB(x_j,\rho)\quad\mbox{for }1\le k\le k_0-1.
\]
Each $\widetilde K_j$ is compact and contained in an $\epsilon$-Lipschitz graph over the tangent space $T_{x_j}M\subset\mathbb R^p$. The sets $\widetilde K_j$ are at distance at least $\epsilon_1>0$ from each other and the points of $A$ not covered by any of the $\widetilde K_j$ are contained in the set
\[
 R_k:=A\cap\bigcup_{i=1}^{k_0}\left(\overline{B (x_i,\rho)\setminus B(x_i,(1-\epsilon_1)\rho)}\right).
\]
In order to prove \eqref{decomp_lip1} it remains to prove that for $\epsilon_1\in(0,1)$ small enough, $R_k$ has $\mathcal M_d(R_k)<\epsilon$. Indeed, $R_k$ is a compact subset of $M$ and thus $\mathcal M_d(R_k)=\mathcal H_d(R_k)$ by a known result valid for closed subsets of $d$-rectifiable sets, see \cite[Thm. 3.2.39]{federer}. By \eqref{lip_area_2} we then bound
\begin{eqnarray*}
 \mathcal H_d(R_k)&\le& \sum_{i=1}^{k_0}\mathcal H_d\left(M\cap \overline{B(x_i,\rho)\setminus B (x_i,(1-\epsilon_1)\rho)}\right)\\
 &\le& k_0(1+\epsilon)^d\mathcal H_d\left(B(0,\rho)\setminus B(0,(1-\epsilon_1)\rho)\right)=k_0(1+\epsilon)^d \rho^d|B_1|(1-(1-\epsilon_1)^d),
\end{eqnarray*}
where the right hand side tends to zero as $\epsilon_1\to 0$, verifying that $\epsilon_1>0$ can be chosen small enough so that $\mathcal M_d(R_k)=\mathcal H_d(R_k)<\epsilon$. Therefore we have found a decomposition of $A$ as in \eqref{decomp_lip1}, as desired.
\end{proof}

%
\section{Some conjectures and open problems}\label{sec:conj_o_p}
%
%
\subsection{Optimal $N$-point configurations for $\mathcal P_s^*(\mathbb S^{p-1},N)$.}
For conjectures regarding $\mathbb S^1$ see Section \ref{sec:resspheres}. The question of what are the $N$-point configurations on $\mathbb R^p$ that optimize $\mathcal P_s^*(\mathbb S^{p-1},N)$ is open, except for the simple cases $N=1,2,3$, in which all points sit at the center of the sphere (see Proposition \ref{prop:small_N}). We conjecture that for $N=p+1$ a regular simplex on a concentric sphere of smaller radius is optimal. Note that for the constrained case of $\mathcal P_s(\mathbb S^{p-1},p+1)$, the inscribed regular simplex is known to be optimal in all dimensions, see \cite{BorSimp} and \cite{yuthesis} for $p=3$.

For $N=5$, conjectures regarding the constrained polarization $\mathcal P_s(\mathbb S^2,5)$ are discussed in   \cite[Chapter 14]{book}. Concerning the problem $\mathcal P_s^*(\mathbb S^2,5)$, based on numerical experiments optimal configurations do not seem to lie on a concentric sphere and in this case it is an open problem to find the geometric structure of optimal configurations.

As mentioned in Proposition \ref{prop:bosuwan*}, the limit of the maximal polarization problem on the sphere for $s\to\infty$ is the question of best \emph{unconstrained covering}. For the sphere, due to Proposition~\ref{prop:constr-unconstr_sphere}, the one-plate and unconstrained best covering problems are equivalent, and thus the former gives information on the latter, and produces useful candidates for the configurations optimizing $\mathcal P_s^*(\mathbb S^{p-1},N)$ for very large $s$. Optimal configurations for the constrained covering of $\mathbb S^2$ were determined for $N=4,6,12$ by L. Fejes T\'oth (see \cite{toth2014regular}), for $N = 5$ and $7$ by Sch\"utte \cite{schutte1955}, for $N = 8$ by L. Wimmer \cite{wimmer2017} and for $N = 10$ and $14$ by G. Fejes T{\'o}th \cite{fejes1969}.

\subsection{The large $N$ limit of optimal polarization configurations}
If $K$ is a lower semicontinuous integrable kernel on $A\times A$ and for each $N\ge 1$ we choose an optimal multiset $\omega_N^*\subset \mathbb R^p$ that realizes the maximum in the definition of $\mathcal P_K^*(A,N)$, where $A\subset\mathbb R^p$ is a compact set of positive $K$-capacity (i.e., there exists some probability measure $\mu$ supported on $A$ whose $K$-potential is $\mu$ integrable), then is it true that every weak-$*$ limit $\mu$ of the sequence
\[
\left\{\frac1N\sum_{x_j\in\omega_N^*}^N\delta_{x_j}\right\}_{N=1}^\infty
\]
satisfies
\[
\min_{y\in A}\int K(x,y)d\mu(X)= T_K^*(A),
\]
where $T_K^*(A):=T_K(A,\mathbb R^p)$?

\subsection{Polarization for lattices in $\R^2$}\label{sec:periodic}
A natural question is the following. Assume $f:(0,\infty)\to(0,\infty)$ is a decreasing convex function and let $K(x,y)=f(|x-y|^2)$. Which lattices $\Lambda\subset\mathbb R^2$ of determinant $1$ maximize the polarization value
\begin{equation}\label{pol_periodic}
\min_{y\in\mathbb R^2} \sum_{x\in \Lambda\setminus\{0\}}f(|x-y|^2)\quad ?
\end{equation}
We note that under rapid decay conditions on $f$ that ensure that the sum in \eqref{pol_periodic} converges, there exist optimizers $\Lambda, y$ that realize the above value. In dimension $d=2$ we conjecture that for completely monotone $f,$ the optimizer of \eqref{pol_periodic} is the hexagonal lattice $A_2$. In \cite{yuthesis} it is shown that the minimum in \eqref{pol_periodic}  for such $f$ and   $\Lambda=A_2$ occurs at the centroids of the equilateral triangles that divide each fundamental domain in half. 

%
%

\subsection{Optimal infinite configurations in $\mathbb R^d$}
Related to the conjecture for $\sigma_{s,2}$ presented in the introduction, it is interesting to explore the generalization of the maximization of \eqref{pol_periodic} for infinite configurations in $\mathbb R^d$. If $\omega_\infty\subset\mathbb R^d$ is a countable configuration such that
\begin{equation}\label{adm_omegainf}
\limsup_{R\to\infty}\frac{\#(\omega_\infty\cap[-R/2,R/2]^d)}{R^d}=1,
\end{equation}
then we define as in \cite{bhrs2016preprint} for $K(x,y)=f(|x-y|^2)$ the polarization constant
\begin{equation}\label{infinite_pol}
P_K(\omega_\infty):=\limsup_{R\to\infty}P_K([-R/2,R/2]^d, \omega_\infty\cap[-R/2,R/2]^d).
\end{equation}
Is it true that under suitable conditions on $f$ the supremum of \eqref{infinite_pol} among $\omega_\infty\subset \mathbb R^d$ satisfying \eqref{adm_omegainf} equals the maximum of \eqref{pol_periodic} over unit density lattices in low dimensions?

\subsection{Weighted unconstrained polarization}

Part (ii) of Theorem \ref{thm:compact_body} can be extended to the case of weighted kernels. This procedure represents a setup, or modification, of the theory presented so far, which allows us to prescribe, or to control, the asymptotic distribution of polarization points at the expense of modifying the kernels $K_s(x,y)=|x-y|^{-s}$ by a suitable weight; i.e. working with $K_s^w(x,y):=w(x,y)|x-y|^{-s}$ where $w(x,y)$ a CPD-weight as defined in \cite[Def. 2.3]{bhrs2016preprint}. Under these conditions, analogues of Theorems \ref{thm:compact_body}, \ref{thm:compact_body_old} and \ref{thm:rectifiable} are expected to hold for $K_s^w$ for the cases $s\ge d$, allowing to relax the hypotheses of \cite[Thm.\,2.3, Thm.\,3.1]{bhrs2016preprint} and to formulate analogues for the unconstrained polarization. We leave this endeavor to future work.

\subsection{Point separation for maximum-polarization configurations}
Is it true that, for $s>p-2$, there exists a constant $c_{s,p}>0$ independent of $N$ such that for any optimizer $\omega_N^*=\{x_{N,1}^*,\ldots,x_{N,N}^*\}\subset\mathbb R^p$ for the problem $\mathcal P_s^*(\mathbb S^{p-1},N)$ we have
\[
\min_{1\le i\neq j\le N}|x_{N,i}^*-x_{N,j}^*|\ge c_{s,p}N^{-1/p}\quad \mbox{for}\quad N=1,2,\ldots\quad ?
\]
The weak separation analogue of the above, giving rise to this question in the constrained polarization problem, has been considered in \cite{2017hrsv}.\\

\section*{Glossary of notation}
\begin{longtable}{rll}
$\omega_N=\{x_1,\ldots,x_N\}$&-&an $N$-point configuration  (multiset) in $\R^p$\\
$\nu(\omega_N)=\frac{1}{N}\sum_{x\in\omega_N}\delta_x$ &-&probability measure associated to a point configuration\\
$A_r:=\{x\in\R^p:\ \op{dist}(x, A)<r\}$&-& $r$-neighborhood of a set, for $A\subset \mathbb R^p$\\
$\mathrm{conv}(A)$&-& convex hull of a set $A$\\
$\mathrm{dist}_{\mathbb S^1}(x,y)=\min\{|t|,|2\pi-t|\}$&-& geodesic distance between $x,y\in\mathbb S^1$ such that $y=e^{it}x$.\\
$\mathcal L_p(A)$&-&  $p$-dimensional Lebesgue measure of set $A$\\
$\mathcal H_d(A)$&-&  $d$-dimensional Hausdorff measure of a set $A$\\
$\beta_k:= \frac{\pi^{k/2}}{\Gamma\left(\frac k2 + 1\right)}$&-& volume of the $k$-dimensional Euclidean ball\\
$P_K(A,\omega_N), \mathcal P_N(A,B,N)$&-&polarization of a configuration,  two-plate polarization,   \eqref{2plate_polarization}.\\
$\mathcal P_K(A,N)$ &-& constrained best $N$-point polarization (single-plate problem) \eqref{ps}\\
$\mathcal P_K^*(A,N)$&-&unconstrained best $N$-point polarization \eqref{psstar}\\
$K_s(x,y)$&-&inverse-power kernel \eqref{def_Ks}\\
$P_s(A,\omega_N), \mathcal P_s(A,N), \mathcal P_s^*(A,N)$&-& see \eqref{def_ps}\\
$\eta_N(A,B), \eta_N(A), \eta_N^*(A)$ &-& two-plate/constrained/unconstrained covering radii, \eqref{relcoverrad}, \eqref{covercover*}\\
$T_K(A,B), T_K(X)$&-& continuum polarization problems \eqref{t_kab}, \eqref{polar_spin}\\
$\tau_{s,d}(N)$&-& scaling factor for the optimal polarization,   \eqref{tau_sd}\\
$\underline h_{s,d}^*(A),\overline h_{s,d}^*(A),h_{s,d}^*(A)$&-& asymptotic values of rescaled optimal polarization,   \eqref{h_sd}\\
$n_{\pi/6,p}$&-& maximum number of  balls with angular radius $\frac{\pi}{6}$ that pack $\mathbb S^{p-1}$\\
$\overline{\mathcal M}_d(A)$, $\underline{\mathcal M}_d(A), \mathcal M_d(A)$&-&Minkowski contents defined in Definition \ref{def:mink}\\
\end{longtable}

\noindent \textbf{Acknowledgement:} The authors thank Alexander Reznikov for his helpful comments and the anonymous referees for their very careful reading of the paper and their suggestions on improving the presentation.

\appendix
\section{Corrigendum to Theorem \ref{thm:compact_body}}\label{app-corrigendum}
The authors are grateful to  Alex Vlasiuk for pointing out that the derivation of equation (5.23) from equation (5.22) in the proof of Theorem \ref{thm:compact_body} did not take into account the measure of the boundary of  $A$. We provide here, in Proposition \ref{mainprop} below, a substitute for this derivation valid in the case $s>p$, whereas for $s=p$ we add to Theorem \ref{thm:compact_body} the additional hypothesis $\mathcal L^p(\partial A)=0$; namely, that $A$ is Jordan-measurable. The amended statement of Theorem \ref{thm:compact_body} is therefore as follows:
\begin{theorem}[{Replacement of Theorem \ref{thm:compact_body}}]\label{mainthm}
If $A\subset\mathbb R^p$ is a compact set and $s>p$, or if $s=p$ and $\mathcal L^p(\partial A)=0$, then
\begin{equation}\label{asymptotics}
h_{s,p}^*(A)=h_{s,p}(A)=\frac{\sigma_{s,p}}{\mathcal L_p(A)^{s/p}}.
\end{equation}
Moreover, if $\mathcal L_p(A)>0$, then for any asymptotically extremal sequence $\Omega=\{\omega_N\}_{N\ge 1}$ (for either the constrained or unconstrained polarization problem)  we have the weak-$*$ convergence
\begin{equation}
\frac{1}{N}\sum_{x_i\in\omega_N}\delta_{x_i}\stackrel{*}{\rightharpoonup}\frac{\mathcal L_p|_A}{\mathcal L_p(A)}\quad\mbox{as}\quad N\to\infty,
\end{equation}
where $\mathcal L_p|_A:=\mathcal{L}_p(\cdot\cap A)$ is the restriction to $A$ of  $\mathcal L_p$.

\end{theorem}
Note that there is no difference between the statement of  Theorem \ref{mainthm} above and that of Theorem \ref{thm:compact_body} in the case $s>p$. As for the case $s=p$, the  modified assumption has no impact for the remaining results of the paper as this case only arises in Theorem \ref{thm:compact_body}.\\

 The proof of Theorem \ref{mainthm} follows exactly like the one of Theorem \ref{thm:compact_body}, except for the following changes:
\begin{itemize}
 \item For the case $s=p$, with the further hypothesis $\mathcal L^p(\partial A)=0$ in Theorem \ref{mainthm}, we can take the sets $G_i:=A\cap B_i$ in the paragraph following (5.12), and the proof holds verbatim.
 \item For the case $s>p$, Proposition \ref{mainprop} below, applied to the sets $A\cap B_i$ and $B_i$ from eq. (5.22) allows to replace eq. (5.23) therein. The new constant $c_{s,p}$ from Proposition \ref{mainprop} below replaces the constant $2$ in eq. (5.23) after which the proof of Theorem \ref{thm:compact_body} follows with no further modifications.
\end{itemize}

\medskip

The new result needed for the case $s>p$ is the following:
\begin{proposition}\label{mainprop}
For $s> p\ge 1$, there exists a constant $c_{s,p}>0$ with the following properties. Let $\epsilon\in(0,1)$ and $B\subset \mathbb R^p$ be a ball and $A\subset B$   a closed set such that $\mathcal L^p(B\setminus A)<\epsilon \mathcal L^p(B)$. Then there holds
\begin{equation}\label{main1}
\bar h^*_{s,p}(A)\le (1-c_{s,p}{\epsilon^{\frac1{p+1}}}) \bar h^*_{s,p}(B).
\end{equation}
\end{proposition}

\medskip

The proof of Proposition \ref{mainprop} is based on two lemmas. For this section, we consider a closed ball $B\subset \mathbb R^p$ and a closed subset $A\subset B$, such that $\mathcal L^p(B\setminus A)<\epsilon$. Furthermore, let $\mathcal P^*_s(A,N)$ be the optimum $N$-point polarization of set $A$, and let $\omega_N^A$ be an $N$-point configuration such that
\begin{equation}\label{omegana}
 \mathcal U_{\omega_N^A}(y):=\sum_{x \in \omega_N^A}|x-y|^{-s}=\mathcal P^*_s(A,N).
\end{equation}
\begin{lemma}\label{lem1}
 Let $s>0$, ${\delta}\in(0,1)$ and let $N$ be a positive integer. If $y\in\mathbb R^p$ is such that
 \begin{equation}\label{disty}\mathrm{dist}(y,A)<\frac{{\delta}}s\ (\mathcal P_s^*(A,N))^{-\frac1s}
 \end{equation}
 then
 \begin{equation}\label{lbd}
  \mathcal U_{\omega_N^A}(y) \ge (1-{\delta})\mathcal P^*_s(A,N).
 \end{equation}
\end{lemma}
\begin{proof}
First note that if $\mathrm{dist}(y,\omega_N^A)<\left(\mathcal P^*_s(A,N)\right)^{-\frac1s}$ then
\[
 \mathcal U_{\omega_N^A}(y)\ge \mathcal P^*_s(A,N),
\]
and thus \eqref{lbd} holds {\it a fortiori}. Therefore from now on we consider points $y\in \mathbb R^p$ such that \eqref{disty} and $\mathrm{dist}(y,\omega_N^A)\ge \left(\mathcal P^*_s(A,N)\right)^{-\frac1s}$ hold, and our goal is to prove \eqref{lbd} for such $y$.

\medskip
Let $y_1\in A$ be such that $|y_1-y|=\mathrm{dist}(y,A)$ and let
\begin{equation}\label{y2max}
 y_2\in \mathrm{argmax}_{y'\in[y,y_1]}\mathcal U_{\omega_N^A}(y').
\end{equation}
We claim that the following chain of inequalities holds:
\begin{eqnarray}
 \mathcal U_{\omega_N^A}(y)&\ge&\mathcal U_{\omega_N^A}(y_2 ) -\int_0^1 \left|\nabla \mathcal U_{\omega_N^A}(y+t(y_2-y))\cdot (y_2-y)\right|dt\label{uybd0}\\
 &\ge&\mathcal U_{\omega_N^A}(y_2 ) -s|y_2-y|\left[\min_{y'\in[y,y_2]}\mathrm{dist}(y',\omega_N^A)\right]^{-1}\max_{y'\in[y,y_2]}\mathcal U_{\omega_N^A}(y')\label{uybd1}\\
 &\ge&\left(1-{\delta}\right)\mathcal U_{\omega_N^A}(y_2)\ge\left(1-{\delta} \right)\mathcal P_s^*(A,N).\label{uybd2}
\end{eqnarray}
We now prove the above. The bound \eqref{uybd0} follows by Taylor expansion. Inequality \eqref{uybd1} follows by noting that whenever $x\neq y'$ we have $|\nabla_y|x-y'|^{-s}|=s|x-y'|^{-s-1}$, therefore for $y'\in[y,y_2]$ we have
\[
 \left|\nabla \mathcal U_{\omega_N^A}(y')\right|\le s\sum_{x\in \omega_N^A}|x-y'|^{-s-1}\le s\left[\min_{y'\in[y,y_2]}\mathrm{dist}(y',\omega_N^A)\right]^{-1}\mathcal U_{\omega_N^A}(y').
\]
The first inequality in \eqref{uybd2} follows by using definition \eqref{y2max} of $y_2$, and the bounds following from our hypotheses on $y$: $|y_2-y|\le |y_1-y|=\mathrm{dist}(y,A)\le \frac{{\delta}}s(\mathcal P_s^*(A,N))^{-\frac1s}$ and $\mathrm{dist}(y,\omega_N^A)\ge(\mathcal P^*_s(A,N))^{-\frac1s}$. The second inequality in \eqref{uybd2} follows by the fact that $y_1\in A$ and the definition of $y_2$, and of $\omega_N^A$:
\[
\mathcal P^*_s(A,N)=\min_{x\in A} \mathcal U_{\omega_N^A}(x)\le \mathcal U_{\omega_N^A}(y_1)\le \mathcal U_{\omega_N^A}(y_2)
\]
\end{proof}
Recall the notation, for the $r$-neigborhood of a closed set $K\subset \mathbb R^d$, for $r>0$:
\[
 (K)_r:=\{x\in\mathbb R^d:\ \mathrm{dist}(x,K)<r\}.
\]
\begin{lemma}\label{lem2}
 There exists a constant $c_p>0$ with the following properties. Let $B\subset\mathbb R^p$ be a ball and let $A\subset B$ be a closed set, such that for some $\epsilon \in(0,1)$ there holds $\mathcal L^p(B\setminus A)<\epsilon \mathcal L^p(B)$. Then for each $r\in (0, \epsilon\ \mathrm{diam}(B))$ we can cover all of $B\setminus (A)_r$ by at most $c_p \frac{\epsilon \mathcal L^p(B)}{r^p}$ balls of radius $r$.
\end{lemma}

\begin{proof}
 Let $r':=r/\sqrt{p}$ and we show that we may take as the set of ball centers the following:
 \[
  W:=\{r'k:\ k\in \mathbb Z^p,\ (r'k+[-r',r']^p)\cap \left(B\setminus (A)_r\right)\neq \emptyset\}.
 \]
Equivalently, $W$ is formed by those vertices of $(r'\mathbb Z^p)$-grid cubes of the form $r'(k+[0,1]^p)$ which meet $B\setminus (A)_r$.

\medskip

We note that the balls with centers in $W$ and radius $\frac{r'}2$ are disjoint and contained in the $r'$-neighborhood of $B\setminus (A)_r$. Furthermore, we have the inclusion
\[
 (B\setminus (A)_r)_{r'}\subset (B)_{r'}\setminus A=((B)_{r'}\setminus B) \cup (B\setminus A),
\]
from which it follows that, denoting by $\# W$ the cardinality of $W$,
\[
\# W\cdot  \mathcal L^p(B_{\frac{r'}2}) \le \mathcal L^p((B)_{r'}\setminus B) + \mathcal L^p(B\setminus A) \le \left(C_p\frac{r'}{\mathrm{diam}(B)} + \epsilon\right)\mathcal L^p(B)\le (C_p+1)\ \epsilon\ \mathcal L^p(B).
\]
This implies that for $c_p:=2^p(C_p+1)/(p^{\frac{p}2}\beta_p)$, in which $\beta_p$ is the volume of the unit ball in $\mathbb R^p$, there holds
\[
 \# W \le c_p\epsilon \frac{\mathcal L^p(B)}{r^p}.
\]
It remains to show that radius-$r$ balls with centers in $W$ cover $B\setminus (A)_r$. Indeed, note that if the cube $r'k + [-r',r']^p$ with $k\in \mathbb Z^p$ meets $B\setminus (A)_r$, then it is contained in the ball $B(r'k, \sqrt{p}\ r')=B(r'k,r)$, and thus
\[
 B\setminus (A)_r\subset\bigcup_{r'k\in W}(r'k +[-r',r']^p) \subset \bigcup_{r'k\in W}B(r'k,r),
\]
as desired.
\end{proof}

\begin{proof}[Proof of Proposition \ref{mainprop}:]
Observe that by the same proof as in \cite[Thm. 3.4]{2013erdelyisaff}, which applies also without the restriction that the optimum polarization points belong to $A$, with $\mathcal L^p$ used as the measure $\mu$ in the proof, there exists a constant $C_{s,p}>0$ independent of $N,A$, such that $\mathcal P^*_s(A,N)\le C_{s,p}N^{s/p}/(\mathcal L^p(A))^{s/p}$. Now, applying Lemma \ref{lem1} {with $\delta:=\epsilon^{\frac1{p+1}}$} we find that for the optimum configuration $\omega_N^A$ like in \eqref{omegana}, with
\begin{equation}\label{defrn}
 r_N:=\frac{{\epsilon^{\frac1{p+1}}}}{s}\left(\mathcal P_s^*(A,N)\right)^{-\frac1s}\ge \frac{{\epsilon^{\frac1{p+1}}}\,(\mathcal L^p(A))^{\frac1p}}{s(C_{s,p})^{\frac1s}} N^{-\frac1p},
\end{equation}
we have
\begin{equation}\label{1}
\forall y\in (A)_{r_N},\quad \mathcal U_{\omega_N^A}(y)\ge\left(1-{\epsilon^{\frac1{p+1}}}\right) \mathcal P_s^*(A,N).
\end{equation}
Next, for $N$ large enough so that $r_N<\epsilon\ \mathrm{diam}(B)$ we apply Lemma \ref{lem2} with $r_N$ playing the role of $r$, and we find a set of centers $W$ such that, using also \eqref{defrn} in the second inequality below:
\begin{equation}\label{propw}
 B\setminus (A)_{r_N}\subset \bigcup_{x\in W}B(x,r_N),\qquad \# W \le c_p\ \epsilon \frac{\mathcal L^p(B)}{r_N^p}\le \widetilde C_{s,p}\  {\epsilon^{\frac1{p+1}}}\ \frac{\mathcal L^p(B)}{\mathcal L^p(A)}\ N\le \widetilde C_{s,p}\  {\epsilon^{\frac1{p+1}}} (1-\epsilon) N.
\end{equation}
By considering the new configuration $\omega_{\widetilde N}:=\omega_N^A \cup W$ whose cardinality is denoted $\widetilde N$, we find that
\begin{equation}\label{omegatilde}
 \widetilde N\in \left[N,N\left(1+\widetilde C_{s,p} {\epsilon^{\frac1{p+1}}(1-\epsilon)}\right)\right], \quad \mathcal U_{\omega_{\widetilde N}}(y)\ge \left\{\begin{array}{ll}
 \left(1-{\epsilon^{\frac1{p+1}}}\right)\mathcal P_s^*(A,N)&\text{ for }y\in (A)_{r_N},\\[3mm]
 \mathcal P_s^*(A,N)&\text{ for }y\in B\setminus (A)_{r_N},
 \end{array}\right.
\end{equation}
in which for the first inequality we use \eqref{1} and for the second one we use the first part of \eqref{propw} and the fact that for $x\in W, y\in B(x,r_N)$ there holds, since $\epsilon<1\le p<s$,
\[
\mathcal U_{\omega_{\widetilde N}}(y)\ge |x-y|^{-s}\ge r_N^{-s}\ge \left(\frac{{\epsilon^{\frac1{p+1}}}}s\right)^s\mathcal P_s^*(A,N)\ge \mathcal P_s^*(A,N).
\]
We then find that, using also the fact that $\epsilon\in(0,1)$,
\begin{equation}\label{pbntilde}
 \frac{\mathcal P_s^*(B,\widetilde N)}{\widetilde N^{s/p}}\ge{\frac{1-\epsilon^{\frac1{p+1}}}{\left(1+\widetilde C_{s,p}\,\epsilon^{\frac1{p+1}}(1-\epsilon)\right)^{s/p}}}\frac{\mathcal P_s^*(A,N)}{N^{s/p}}\ge {(1-\epsilon^{\frac1{p+1}})\left(1-\frac{s}{p}\,\widetilde C_{s,p}\epsilon^{\frac1{p+1}}\right)}\frac{\mathcal P_s^*(A,N)}{N^{s/p}},
\end{equation}
from which the bound \eqref{main1} follows, with $c_{s,p}=1+\frac{s}{p}\,\widetilde C_{s,p}$.

\end{proof}

\end{document}